\def\BState{\State\hskip-\ALG@thistlm}
\renewcommand{\email}[2][]{%
  \ifx\emails\@empty\relax\else{\g@addto@macro\emails{,\space}}\fi%
  \@ifnotempty{#1}{\g@addto@macro\emails{\textrm{(#1)}\space}}%
  \g@addto@macro\emails{#2}%
}
\newtheorem{prop}{Proposition}
\newtheorem{remark}{Remark}
\newcommand{\vb}{\mathbf{b}}
\newcommand{\vd}{\mathbf{d}}
\newcommand{\vx}{\mathbf{x}}
\newcommand{\vy}{\mathbf{y}}
\newcommand{\vz}{\mathbf{z}}
\newcommand{\vu}{\mathbf{u}}
\newcommand{\vU}{\mathbf{U}}
\newcommand{\vv}{\mathbf{v}}
\newcommand{\vw}{\mathbf{w}}
\newcommand{\vf}{\mathbf{f}}
\newcommand{\vn}{\mathbf{n}}
\newcommand{\vr}{\mathbf{r}}
\newcommand{\vvarphi}{\boldsymbol{\varphi}}
\newcommand{\vpsi}{\boldsymbol{\psi}}
\newcommand{\vphi}{\boldsymbol{\phi}}
\newcommand{\vnu}{\boldsymbol{\nu}}
\newcommand{\vLambda}{\boldsymbol{\Lambda}}
\newcommand{\veps}{\boldsymbol{\epsilon}}
\newcommand{\vxi}{\boldsymbol{\xi}}
\newcommand{\vzeta}{\boldsymbol{\zeta}}
\newcommand{\bz}{\boldsymbol{0}}
\newcommand{\bx}{\mathbf{x}}
\newcommand{\vF}{\mathbf{F}}
\newcommand{\vL}{\mathbf{L}}
\newcommand{\vY}{\mathbf{Y}}
\newcommand{\vJ}{\mathbf{J}}
\newcommand{\vdY}{\boldsymbol{\delta Y}}
\newcommand{\vo}{\mathbf{0}}
\newcommand{\vmu}{\boldsymbol{\mu}}
\newcommand{\vla}{\boldsymbol{\lambda}}
\newcommand{\eps}{\varepsilon}
\newcommand{\jump}[1]{[ #1 ]} 
\newcommand{\avrg}[1]{\left\{ #1 \right\}} 
\newcommand{\Th}{\mathcal{T}_h} 
\newcommand{\Eh}{\mathcal{E}_h} 
\newcommand{\E}{\mathcal{E}} 
\newcommand{\K}{T} 
\newcommand{\V}{\mathbb{V}} 
\newcommand{\h}{ {\rm h}} 
\newcommand{\Gh}{\Gamma_h}
\newcommand{\Id}{I} 
\newcommand{\I}{\normalfont{\Romanbar{1}}} 
\newcommand{\II}{\normalfont{\Romanbar{2}}} 
\newcommand{\g}{g} 
\newcommand{\tr}{{\rm tr}} 
\newcommand{\di}{\mathop{\rm div}\nolimits} 
\newcommand{\supp}{\mathop{\rm supp}\nolimits} 
\newcommand{\A}{\mathbb{A}} 
\def\restriction#1#2{\mathchoice
	{\setbox1\hbox{${\displaystyle #1}_{\scriptstyle #2}$}
		\restrictionaux{#1}{#2}}
	{\setbox1\hbox{${\textstyle #1}_{\scriptstyle #2}$}
		\restrictionaux{#1}{#2}}
	{\setbox1\hbox{${\scriptstyle #1}_{\scriptscriptstyle #2}$}
		\restrictionaux{#1}{#2}}
	{\setbox1\hbox{${\scriptscriptstyle #1}_{\scriptscriptstyle #2}$}
		\restrictionaux{#1}{#2}}}
\def\restrictionaux#1#2{{#1\,\smash{\vrule height .8\ht1 depth .85\dp1}}_{\,#2}}
\definecolor{darkviolet}{rgb}{0.58, 0.0, 0.83}
\title[Large deformations of prestrained plates]{LDG approximation of large deformations \\ of prestrained plates}
\author{Andrea Bonito}
\address[Andrea Bonito]{Department of Mathematics, Texas A\&M University, College Station, TX 77845, USA.}
\email{bonito@tamu.edu}
\author{Diane Guignard}
\address[Diane Guignard]{Department of Mathematics and Statistics
\\ University of Ottawa,
Ottawa, ON K1N 6N5, Canada.}
\email{dguignar@uottawa.ca}
\author{Ricardo H. Nochetto}
\address[Ricardo H. Nochetto]{Department of Mathematics and Institute for Physical Science
		and Technology \\ University of Maryland,
		College Park, Maryland 20742, USA.}
\email{rhn@umd.edu}
\author{Shuo Yang}
\address[Shuo Yang]{Department of Mathematics \\ University of Maryland,
		College Park, Maryland 20742, USA.}
\email{shuoyang@umd.edu}
\date{\today}
\begin{document}
	
\maketitle

\begin{abstract} 
A reduced model for large deformations of prestrained plates consists of minimizing a second order bending energy subject to a nonconvex metric constraint. The former involves the second fundamental form of the middle plate and the later is a restriction on its first fundamental form. We discuss a formal derivation of this reduced model along with an equivalent formulation that makes it amenable computationally. We propose a local discontinuous Galerkin (LDG) finite element approach that hinges on the notion of reconstructed Hessian. We design discrete gradient flows to minimize the ensuing nonconvex problem and to find a suitable initial deformation. We present several insightful numerical experiments, some of practical interest, and assess various computational aspects of the approximation process.
\end{abstract}

\keywords{\textbf{Keywords}: Nonlinear elasticity; plate bending; prestrained materials; metric constraint; discontinuous Galerkin; reconstructed Hessian;
iterative solution; simulations}

\subjclass{\textbf{AMS Subject Classification}: 65N12, 65N30, 74K20, 74-10}
\section{Introduction}

Natural and manufactured phenomena abound where thin materials develop internal stresses, deform out of plane and exhibit nontrivial 3d shapes. Nematic glasses \cite{modes2010disclination,modes2010gaussian}, natural growth of soft tissues \cite{goriely2005differential,yavari2010geometric} and manufactured polymer gels \cite{kim2012thermally,klein2007shaping,wu2013three} are chief examples. Such incompatible prestrained materials may be key constituents of micro-mechanical devices and be subject to actuation. A model postulates that these plates may reduce internal stresses by undergoing large out of plane deformations $\vu$ as a means to minimize an elastic energy $E[\vu]$ that measures the discrepancy between a reference (or target) metric $G$ and the orientation preserving realization $\vu$ of it. The strain tensor $\veps_{G}(\nabla\vu)$, given by  
\begin{equation}\label{eqn:3Dprestrain}
  \veps_{G}(\nabla\vu) := \frac12 \big( \nabla\vu^T \nabla\vu - G  \big),
\end{equation}
measures such discrepancy and yields the following elastic energy functional for prestrained isotropic materials in a 3d reference body $\mathcal{B}$ and without external forcing
\begin{equation}\label{E:prestrain-energy}
E[\vu] := \int_{\mathcal{B}}  \mu \Big| G^{-1/2} \veps_{G}(\nabla\vu) G^{-1/2} \Big|^2+
\frac{\lambda}{2}\tr\Big( G^{-1/2} \veps_{G}(\nabla\vu) G^{-1/2} \Big)^2,
\end{equation}
where $\mu,\lambda$ are the Lam\'e constants \cite{efrati2009,efrati2011hyperbolic,sharon2010mechanics}. A deformation $\vu:\mathcal{B}\to\mathbb{R}^3$ such that $\veps_{G}(\nabla\vu)=\mathbf{0}$ is called isometric immersion. If such a map exists, then the material can attain a stress-free equilibrium configuration, i.e., $E[\vu]=0$. However, the existence of an isometric immersion $\vu$ of class $H^2(\mathcal{B})$ for any given smooth metric $G$ is not guaranteed in general and it constitutes an outstanding problem in differential geometry. In the absence of such a map, the infimum of $E[\vu]$ is strictly positive and the material has a residual stress at free equilibria.

Slender elastic bodies are of special interest in many applications and our main focus. In this case, the 3d domain $\mathcal{B}$ can be viewed as a tensor product of a 2d domain $\Omega$, the midplane, and an interval of length $s$, namely
$
\Omega \times (-\frac{s}{2},\frac{s}{2}).
$
Developing dimensionally-reduced models as $s\to0$ is a classical endeavor in nonlinear elasticity. Upon rescaling $E[\vu]$ with a factor of the form $s^{-\beta}$, several $2d$ models can be derived in the limit $s\to0$. A geometrically nonlinear reduced energy was obtained formally by Kirchhoff in his seminal work of 1850. An ansatz-free rigorous derivation for isotropic materials was carried out in the influential work of Friesecke, James and M\"uller in 2002 \cite{frie2002b} via $\Gamma$-convergence for $\beta=3$. This corresponds to the bending regime of the nonlinear Kirchhoff plate theory.

If the target metric $G$ is the identity matrix, there is no in-plane stretching and shearing of the material leaving bending as the chief mechanism of deformation;
an excellent example examined in \cite{frie2002b} is the bending of a sheet of paper.
For a generic metric $G$ that does not depend on $s$ and is uniform across the thickness, Efrati, Sharon and Kupferman derived a 2d energy which decomposes into stretching and bending components \cite{efrati2009}; the former scales linearly in $s$ whereas the latter does it cubically. The first fundamental form of the midplane characterizes stretching while the second fundamental form accounts for bending. The thickness parameter $s$ appears in the reduced energy and determines the relative weight between stretching and bending.

The asymptotic limit $s\to0$ requires a choice of scaling exponent $\beta$. The bending regime $\beta=3$ has been studied by Lewicka and collaborators \cite{lewicka2011,lewicka2016}, while \cite{frie2006,bella2014metric,lewicka2010foppl,lewicka2015variational} discussed other exponents $\beta$. For instance, $\beta=5$ corresponds to the F\"oppl von K\'{a}rman plate theory, which is suitable for moderate deformations. Different energy scalings select specific asymptotic relations between the prestrain metric $G$ and deformations $\vu$. For instance, for $\beta=3$ and metrics $G$ of the form
\begin{equation}\label{E:prestrain-metric}
G(\vx',x_3)=G(\vx')=
\begin{bmatrix}
g(\vx') & \mathbf{0} \\
\mathbf{0} & 1
\end{bmatrix}
\quad \forall \, \vx'\in \Omega, \, x_3\in(-s/2,s/2),
\end{equation}
with $g\in\mathbb{R}^{2\times2}$ symmetric uniformly positive definite, the first fundamental form $\I[\vy]$ of parametrizations $\vy:\Omega\to\mathbb{R}^3$ of the midplane must satisfy the following pointwise metric constraint as $s\to0$
\begin{equation}\label{E:metric-constraint}
\I[\vy](\vx') = g(\vx') \quad \forall \, \vx'\in \Omega;
\end{equation}
this account for the stretching and shearing of the midplane.
Moreover, the scaled elastic energy $s^{-3} E[\vu]$ turns out to $\Gamma$-converge to the reduced bending energy
\begin{equation}\label{E:reduced-bending}
  E[\vy] = \frac{\mu}{12}\int_{\Omega}\Big|g^{-\frac{1}{2}} \, \II[\vy] \, g^{-\frac{1}{2}}\Big|^2+\frac{\lambda}{2\mu+\lambda}\tr\Big(g^{-\frac{1}{2}}\, \II[\vy] \, g^{-\frac{1}{2}} \Big)^2,
\end{equation}
which depends solely on the second fundamental form $\II[\vy]$ of $\vy$
in the absence of external forcing \cite{lewicka2011,lewicka2016}. It is known that
$E[\vy]>0$ provided that the Gaussian curvature of the surface $\vy(\Omega)$ does not vanish identically
\cite{lewicka2011,lewicka2016}. We illustrate this in Figure \ref{F:efrati}.
\begin{figure}[htbp]
	\begin{center}
		\includegraphics[width=5.5cm,trim=300 300 300 300, clip]{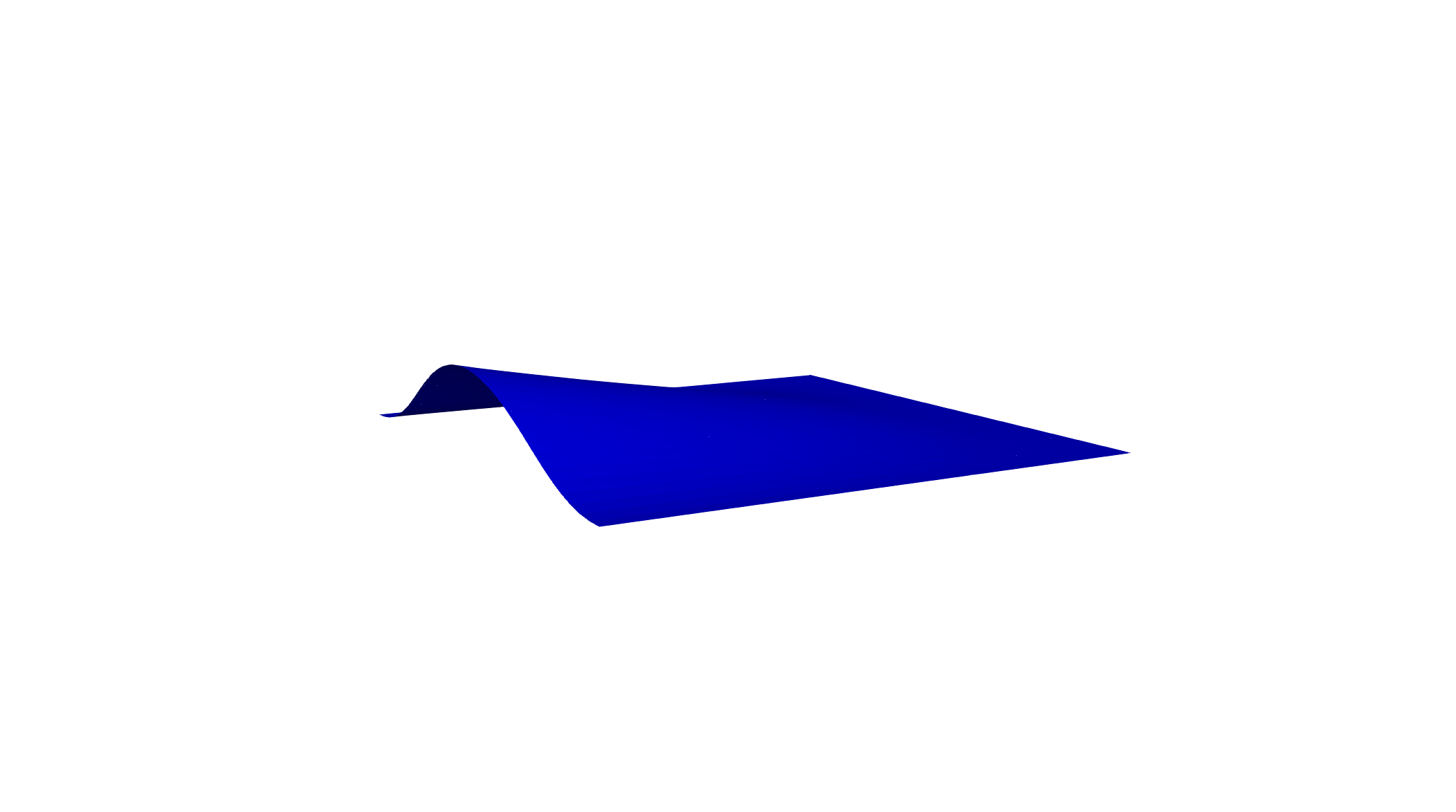}
		\includegraphics[width=5.5cm,trim=300 300 300 300, clip]{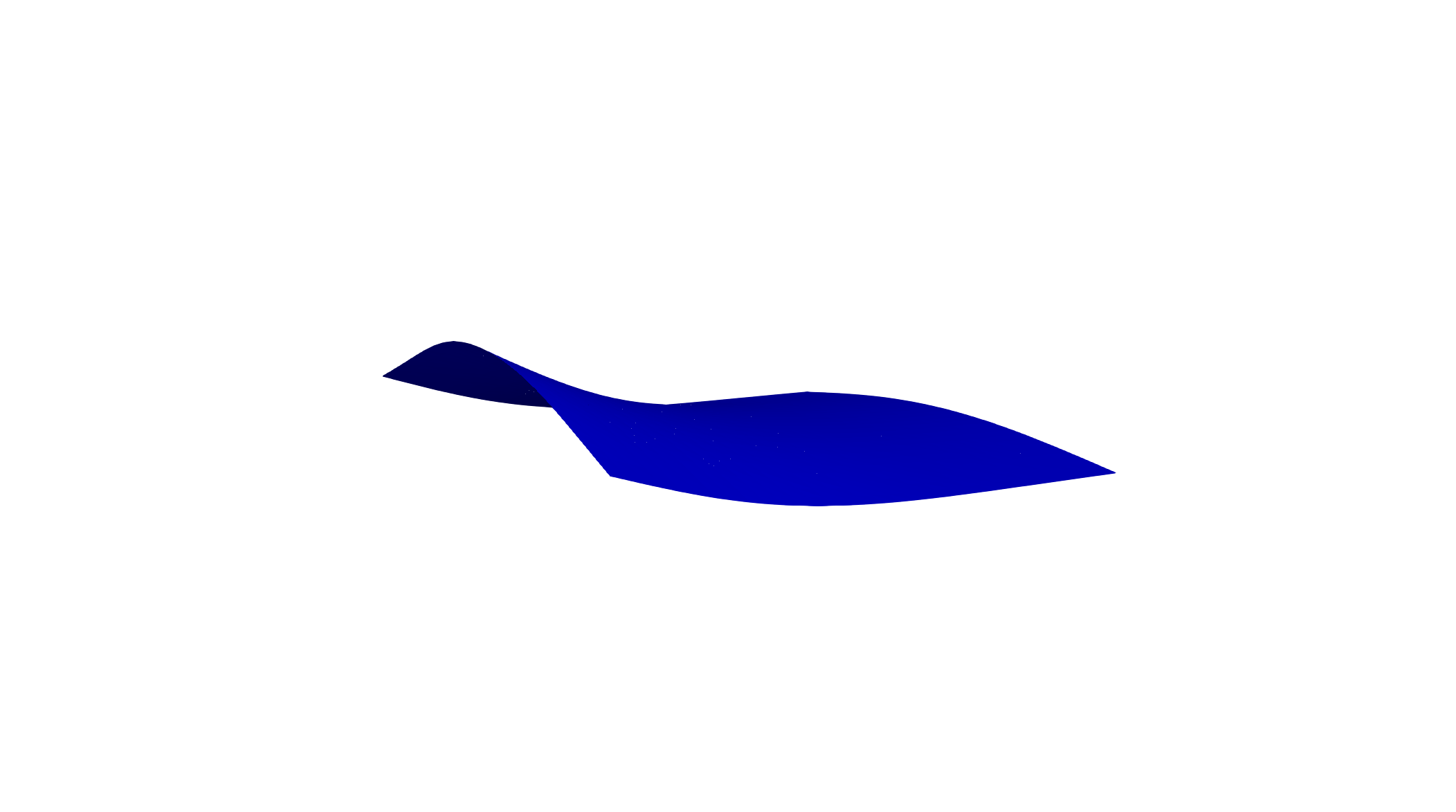}
		\label{F:efrati}
	\end{center}
        \caption{\small A trapezoidal-like plate is glued at the three edges of a square of unit size and is free at the remaining side, as suggested in \cite{efrati2009} (left). For $s$ small, the plate cannot sustain the in-plane compression and buckles up. The deformation $\vy(x_1,x_2)=(x_1,x_2,x_1^2(1-x_1)^2x_2^2)^T$ mimics this configuration and yields a target metric $g=\I[\vy]$;  $\vy$ and $g$ are thus compatible. Upon freeing the boundary conditions, the plate changes shape to a non-flat configuration with the same metric (right).}
\end{figure}

In this article, we present a numerical study of the minimization of \eqref{E:reduced-bending} subject to the constraint \eqref{E:metric-constraint} with either Dirichlet or \emph{free} boundary conditions. We start in Section \ref{sec:prob_statement} with a justification of \eqref{E:prestrain-energy} followed by a formal derivation of \eqref{E:metric-constraint} and \eqref{E:reduced-bending} as the asymptotic limit of $s^{-3} E[\vu]$ as $s\to0$. Moreover, we show an equivalent formulation that basically replaces the second fundamental form $\II[\vy]$ by the Hessian $D^2\vy$, which makes the constrained minimization problem amenable to computation. This derivation is, however, trickier than that in \cite{bartels2013,bonito2018} for single layer plates and \cite{bonito2015,bonito2017,bonito2020discontinuous} for bilayer plates.

The numerical treatment of the ensuing fourth order problem is a challenging and exciting endeavor. In \cite{bartels2013,bonito2015,bonito2017}, the discretization hinges on Kirchhoff elements for isometries $\vy$, i.e., $g=I_2\in\mathbb{R}^{2\times2}$ is the identity matrix. The approximation of $\vy$ in \cite{bonito2018,bonito2020discontinuous} relies on a discontinuous Galerkin (dG) method. In all cases, the minimization problem associated with the nonconvex contraint \eqref{E:metric-constraint} resorts to a discrete $H^2$-gradient flow approach. In addition, a $\Gamma$-convergence theory is developed in \cite{bartels2013,bonito2015,bonito2018}. In this paper, inspired by \cite{cockburn1998}, we design a {\it local discontinuous Galerkin} method (LDG) for $g\ne I_2$ that replaces $D^2\vy$ by a reconstructed Hessian $H_h[\vy_h]$ of the discontinuous piecewise polynomial approximation $\vy_h$ of $\vy$. Such discrete Hessian $H_h[\vy_h]$ consists of three distinct parts: the broken Hessian $D_h^2\vy_h$, the lifting $R_h([\nabla_h\vy_h])$ of the jump of the broken gradient $\nabla_h\vy_h$ of $\vy_h$, and the lifting $B_h([\vy_h])$ of the jumps of $\vy_h$ itself. Lifting operators were introduced in \cite{bassi1997} and analyzed in \cite{brezzi1999,brezzi2000}. The definition of $R_h$ and $B_h$ is motivated by the liftings of \cite{ern2010,ern2011} leading to discrete gradient operators.

It is worth pointing out prior uses of $H_h[\vy_h]$. Discrete Hessians were instrumental to study convergence of dG for the bi-Laplacian in \cite{pryer} and plates with isometry constraint in \cite{bonito2018}. In the present contribution, $H_h[\vy_h]$ makes its debut as a chief constituent of the numerical method. We introduce $H_h[\vy_h]$ in Section \ref{sec:method} along with the LDG approximation of \eqref{E:reduced-bending} and the metric defect $D_h[\vy_h]$ that relaxes \eqref{E:metric-constraint} and makes it computable. We also discuss two discrete $H^2$-gradient flows, one to reduce the bending energy \eqref{E:reduced-bending} starting from $\vy_h^0$, and the other to diminish the stretching energy and make $D_h[\vy_h^0]$ as small as possible. The former leads to Algorithm \ref{algo:main_GF} (gradient flow) and the latter to Algorithm \ref{algo:preprocess} (initialization) of Section \ref{sec:method}. We reserve Section~\ref{sec:solve} for implementation aspects of Algorithms \ref{algo:main_GF} and \ref{algo:preprocess}. We present several numerical experiments, some of practical interest, in Section~\ref{sec:num_res} that document the performance of the LDG approach and illustrate the rich variety of shapes achievable with the reduced model \eqref{E:metric-constraint}-\eqref{E:reduced-bending}. We close the paper with concluding remarks in Section \ref{S:conclusions}.

\section{Problem statement} \label{sec:prob_statement}

Let $\Omega_s:=\Omega\times(-s/2,s/2)\subset\mathbb{R}^3$ be a three-dimensional plate at rest, where $s>0$ denotes the thickness and $\Omega\subset\mathbb{R}^2$ is the (flat) midplane. Given a Riemannian metric $G:\Omega_s\rightarrow\mathbb{R}^{3\times 3}$ (symmetric uniformly positive definite matrix), we consider 3d deformations $\vu:\Omega_s\rightarrow\mathbb{R}^3$ driven by the strain tensor $\veps_{G}(\nabla\vu)$ of \eqref{eqn:3Dprestrain} that measures the discrepancy between $\nabla\vu^T\nabla\vu$ and $G$; hence, the 3d elastic energy $E[\vu]=0$  whenever $\veps_{G}(\nabla\vu)=\mathbf{0}$. We say that $G$ is the {\it reference (prestrained or target) metric.} An orientable deformation $\vu:\Omega_s\to\mathbb{R}^3$ of class $H^2(\Omega)$ satisfying $\veps_{G}(\nabla\vu)=\mathbf{0}$ is called an {\it isometric immersion.} We assume that $G$ does not depend on $s$ and is uniform throughout the thickness, as written in \eqref{E:prestrain-metric}
with $g:\Omega\rightarrow\mathbb{R}^{2\times 2}$ symmetric uniformly positive definite \cite{lewicka2011,efrati2009}. If $g^{1/2}$ denotes the square root of $g$, we have
\begin{equation} \label{def:Ginv}
G^{\frac{1}{2}}=
\begin{bmatrix}
g^{\frac{1}{2}} & \mathbf{0} \\
\mathbf{0} & 1
\end{bmatrix},
\quad
G^{-\frac{1}{2}}=
\begin{bmatrix}
g^{-\frac{1}{2}} & \mathbf{0} \\
\mathbf{0} & 1
\end{bmatrix}.
\end{equation}

In Section \ref{S:energy} we rederive, following \cite{efrati2009}, the elastic energy $E[\vu]$ advocated in \cite{lewicka2011,lewicka2016}. We reduce the 3d model to a 2d plate model in Section \ref{S:reduced-model}. To this end, we perform a formal asymptotic analysis as $s\to0$ but also consider the pre-asymptotic regime $s>0$. We discuss the notion of admissibility in Section \ref{S:admissibility} and derive an equivalent reduced energy better suited for computation in Section \ref{S:simplified-energy}.

We will use the following notation below. The $i^{th}$ component of a vector $\vv\in\mathbb{R}^n$ is denoted $v_i$ while for a matrix $A\in\mathbb{R}^{n\times m}$, we write $A_{ij}$ the coefficient of the $i^{th}$ row and $j^{th}$ column. 
The gradient of a scalar function is a column vector and for $\mathbf v:\mathbb R^m \rightarrow \mathbb R^n$, we set
$(\nabla \mathbf v)_{ij} := \partial_j v_i$, $i=1,..,n$, $j=1,...,m$. The Euclidean norm of a vector is denoted $|\cdot|$. For matrices $A,B\in\mathbb{R}^{n\times m}$, we write $A:B:=\tr(B^TA)=\sum_{i=1}^{n}\sum_{j=1}^{m}A_{ij}B_{ij}$ and $|A|:=\sqrt{A:A}$ the Frobenius norm of $A$. To have a compact notation later, for higher-order tensors we set
\begin{equation}\label{E:higher-order}
\mathbf{A}=(A_k)_{k=1}^n\in\mathbb{R}^{n\times m\times m}
~ \Rightarrow ~
\tr(\mathbf{A}) = \big( \tr(A_k) \big)_{k=1}^n,
\quad
|\mathbf{A}| = \left(\sum_{k=1}^n |A_k|^2 \right)^{\frac12}.
\end{equation}
Furthermore, we will frequently use the convention 
\begin{equation}\label{E:higher-order_B}
B\mathbf{A}B := (BA_kB)_{k=1}^3   \in \mathbb R^{3 \times 2 \times 2},
\end{equation}
for $\mathbf{A}  \in \mathbb R^{3 \times 2 \times 2}$ and $B \in \mathbb R^{2\times 2}$.
In particular, for $\vy:\mathbb R^2 \rightarrow \mathbb R^3$, we will often write
\begin{equation}\label{e:notation_terms_A}
g^{-1/2} \, D^2 \vy \, g^{-1/2} = \left(g^{-1/2} \, D^2 y_k \, g^{-1/2}\right)_{k=1}^3,
\end{equation}
which, combined with \eqref{E:higher-order}, yields
\begin{equation}\label{e:notation_terms}
\begin{split}
\big|g^{-1/2} \, D^2 \vy \, g^{-1/2}\big| &= \left( \sum_{k=1}^3 \big|g^{-1/2} \, D^2 y_k \, g^{-1/2} \big|^2 \right)^{1/2}, \\
\tr \big(g^{-1/2} \, D^2 \vy \, g^{-1/2} \big) &= \left( \tr \big(g^{-1/2} \, D^2 y_k \, g^{-1/2} \big)\right)_{k=1}^3.
\end{split}
\end{equation}
Finally, $\Id_n$ will denote the identity matrix in $\mathbb{R}^{n\times n}$.

\subsection{Elastic energy for prestrained plates}\label{S:energy}

We present, following \cite{efrati2009}, a simple derivation of the energy density $W(\nabla\vu \, G^{-1})$ for prestrained materials. This hinges on the well-established theory of hyperelasticity, and reduces to the classical St. Venant-Kirchhoff model provided $G=I_3$. Such model for isotropic materials reads
\begin{equation} \label{def:W_iso}
W(F):=\mu |\veps_{\Id}|^2+\frac{\lambda}{2}\tr(\veps_{\Id})^2, \quad \veps_{\Id}(F):=\frac{1}{2}\left(F^TF-\Id_3\right).
\end{equation}
Here, $F$ is the deformation gradient, $\veps_{\Id}$ is the Green-Lagrange strain tensor and $\lambda$ and $\mu$ are the (first and second) Lam\'e constants.  This implies
\begin{equation} \label{def:W2_iso}
D^2W(\Id_3)(F,F)=2\mu |e|^2+\lambda\tr(e)^2, \quad e:=\frac{F+F^T}{2}.
\end{equation}
We point out that in \cite{frie2002b}, the strain tensor $\veps_{\Id}=\veps_{\Id}(F)$ of \eqref{def:W_iso} is set to be $\veps_{\Id}(F)=\sqrt{F^TF}-\Id_3$, which yields the same relation \eqref{def:W2_iso}, and thus the same $\Gamma$-limit discussed below.

Given an arbitrary point $\vx_0\in\Omega_s$, we consider the linear transformation
$\vr_0(\vx) := G^{1/2}(\vx_0) (\vx-\vx_0)$; hence $\nabla\vr_0(\vx) = G^{1/2}(\vx_0)$.
The map $\vr_0$ can be viewed as a local re-parametrization of the deformed 3d elastic
body, and $\vz=\vr_0(\vx)$ is a new local coordinate system.
This induces the deformation $\vU(\vz) := \vu(\vx)$ and
\[
\vu = \vU \circ \vr_0
\quad\Rightarrow\quad
\nabla \vu (\vx) = \nabla_{\vz} \vU(\vz) \, G^{\frac12}(\vx_0),
\]
where $\nabla_{\vz}$ denotes the gradient with respect to the variable $\vz$.
The deviation of $\nabla\vu^T \nabla\vu$ from the reference metric $G$ at $\vx=\vx_0$ is thus given by \eqref{eqn:3Dprestrain}
\[
\veps_{G}(\nabla \vu) = \frac12 \big( \nabla\vu^T \nabla\vu - G \big)
= \frac12 G^{\frac12} \big( \nabla_{\vz}\vU^T \nabla_{\vz}\vU - I_3 \big) G^{\frac12}
= G^{\frac12} \veps_{\Id}(\nabla_{\vz}\vU)  G^{\frac12}.
\]
The energy density $W(\nabla_{\vz}\vU)$
at $\vz=\vr_0(\vx)$ with $\vx=\vx_0$ associated with $\veps_{\Id}(\nabla_{\vz}\vU)$, which minimizes when $\veps_{\Id}(\nabla_{\vz}\vU)$ vanishes, is governed by \eqref{def:W_iso} for isotropic materials according to the theory of hyperelasticity. 
What we need to do now is to rewrite this energy density in terms of $\nabla\vu$ at $\vx=\vx_0$, namely $W(\nabla_{\vz}\vU) = W(\nabla\vu \, G^{-1/2})$, whence
\begin{equation}\label{E:new-energydensity}
W(\nabla\vu \, G^{-1/2}) = \mu \Big| G^{-1/2} \, \veps_{G}(\nabla\vu) \, G^{-1/2} \Big|^2+
\frac{\lambda}{2}\tr\Big( G^{-1/2} \, \veps_{G}(\nabla\vu) \, G^{-1/2} \Big)^2.
\end{equation}
This motivates the definition of hyperelastic energy for prestrained materials
\begin{equation} \label{def:EG}
E[\vu] := \int_{\Omega_s} W \big(\nabla\vu(\vx) G(\vx)^{-\frac{1}{2}}\big)d\vx-\int_{\Omega_s}\vf_s(\vx)\cdot\vu(\vx) d\vx,
\end{equation}
where $\vf_s:\Omega_s \rightarrow \mathbb R^3$ is a prescribed forcing term and $W$ is given by \eqref{E:new-energydensity}.

Note that the pointwise decomposition $G(\vx_0)=\nabla \vr_0(\vx_0)^T\nabla \vr_0(\vx_0)$ is always possible because $G(\vx_0)$ is symmetric positive definite.
However, a global transformation $\vr$ such that $\nabla \vr^T\nabla\vr=G$ everywhere need not exist in general because $G$ is not required to be immersible in $\mathbb{R}^3$. This is referred to as {\it incompatible elasticity} in \cite{efrati2009}. Moreover, the infimum of $E[\vu]$ in \eqref{def:EG} should be strictly positive if the Riemann curvature tensor associated with $G$ does not vanish identically \cite{lewicka2011}.
 
\subsection{Reduced model}\label{S:reduced-model}

It is well-known that the case $E[\vu]\sim s$ corresponds to a stretching of the midplane $\Omega$ (membrane theory) while pure bending occurs when $E[\vu]\sim s^3$ (bending theory); see \cite{frie2006}. We examine now the formal asymptotic behavior of $s^{-3} E[\vu]$ as $s\to0$; see also  \cite{efrati2009}.

We start with the assumption \cite{frie2002a,lewicka2011,lewicka2016}
\begin{equation}\label{E:kirchhoff_quad}
\vu(\vx)=\vy(\vx')+x_3\alpha(\vx')\vnu(\vx')+\frac12 x_3^2\beta(\vx')\vnu(\vx') \qquad\forall \, \vx'\in\Omega, ~ x_3 \in (-s/2,s/2),
\end{equation}
where $\vy:\Omega\rightarrow \mathbb R^3$ describes the deformation of the mid-surface of the plate, $\vnu(\vx'):=\frac{\partial_1\vy(\vx')\times\partial_2\vy(\vx')}{|\partial_1\vy(\vx')\times\partial_2\vy(\vx')|}$ is the unit normal vector to the surface $\vy(\Omega)$ at the point $\vy(\vx')$, and $\alpha,\beta:\Omega\rightarrow\mathbb{R}$ are functions to be determined. Compared to the usual Kirchhoff-Love assumption
\begin{equation}\label{E:kirchhoff}
\vu(\vx',x_3) = \vy(\vx') + x_3 \, \vnu(\vx')
\qquad\forall \, \vx'\in\Omega, ~ x_3 \in (-s/2,s/2),
\end{equation}
\eqref{E:kirchhoff_quad} not only restricts fibers orthogonal to $\Omega$ to remain perpendicular to the surface $\vy(\Omega)$ but also allows such fibers to be inhomogeneously stretched. We rescale the forcing term in \eqref{def:EG} as follows
\begin{equation}\label{E:forcing}
\vf(\vx'):=\lim_{s\to 0^+} s^{-3}\int_{-s/2}^{s/2}\vf_s(\vx',x_3) \, dx_3
\qquad\forall \, \vx'\in\Omega,
\end{equation}
and assume the limit to be finite. However, for the asymptotics below we omit this term for simplicity from the derivation and focus on the energy density $W$ in \eqref{def:EG}.

Denoting by $\nabla'$ the gradient with respect to $\vx'$ and writing $\vb(\vx'):=\alpha(\vx')\vnu(\vx')$ and $\vd(\vx'):=\beta(\vx')\vnu(\vx')$, we have for all $\vx= (\vx',x_3)\in\Omega_s$
\begin{equation*}
\nabla\vu(\vx) = \left[ \nabla'\vy(\vx')+x_3\nabla' \vb(\vx')+\frac12x_3^2\nabla'\vd(\vx'), \vb(\vx')+x_3\vd(\vx')\right]\in\mathbb{R}^{3\times 3}.
\end{equation*}
Using the relations
\begin{equation*}
\vnu^T\vnu=1 \quad \mbox{and} \quad \vnu^T\nabla'\vy=\vnu^T\nabla'\vnu=\vd^T\nabla'\vnu=\vd^T\nabla'\vy=\vb^T\nabla'\vnu=\vb^T\nabla'\vy=\mathbf{0},
\end{equation*}
we easily get
\begin{align*}
  \nabla\vu^T\nabla\vu &=
\begin{bmatrix}
\nabla'\vy^T\nabla'\vy & \mathbf{0} \\
\mathbf{0} & \alpha^2
\end{bmatrix}
+x_3
\begin{bmatrix}
\nabla'\vy^T\nabla'\vb+\nabla'\vb^T\nabla'\vy & \nabla'\vb^T\vb \\
\vb^T\nabla'\vb & 2\alpha\beta
\end{bmatrix} \\
&+x_3^2
\begin{bmatrix}
\frac12(\nabla'\vy^T\nabla'\vd+\nabla'\vd^T\nabla'\vy)+\nabla'\vb^T\nabla'\vb & \frac12\nabla'\vd^T\vb+\nabla'\vb^T\vd \\
\frac12\vb^T\nabla'\vd+\vd^T\nabla'\vb & \beta^2
\end{bmatrix}+ h.o.t.
\end{align*}
Moreover, since
\begin{equation*}
|\vnu|^2=1, \quad \partial_j\vb=(\partial_j\alpha)\vnu+\alpha\partial_j\vnu \quad \mbox{and} \quad \vnu\cdot\partial_j\vy=0 \quad \mbox{for } j=1,2,
\end{equation*}
we have
\begin{equation*}
\nabla'\vb^T\nabla'\vy=\alpha\nabla'\vnu^T\nabla'\vy \quad \mbox{and} \quad \nabla'\vb^T\vb=\alpha\nabla'\alpha.
\end{equation*}
Therefore, the expression $2G^{-1/2} \veps_{G}(\nabla\vu) G^{-1/2}$ becomes
\begin{equation*}
  G^{-\frac{1}{2}}\nabla\vu^T\nabla\vu G^{-\frac{1}{2}}-\Id_3 =
  A_1+2x_3A_2+x_3^2A_3 + \mathcal{O}(x_3^3),
\end{equation*}
where
\begin{align*}
A_1 \! &:= \! \begin{bmatrix}
g^{-\frac{1}{2}} \, \I[\vy] \, g^{-\frac{1}{2}}-\Id_2 & \mathbf{0} \\
\mathbf{0} & \alpha^2-1
\end{bmatrix},
\\
A_2 \! &:= \! \begin{bmatrix}
- \alpha g^{-\frac{1}{2}} \, \II[\vy] \, g^{-\frac{1}{2}} & \frac12 \alpha g^{-\frac12}\nabla'\alpha \\
\frac12\alpha \nabla'\alpha^Tg^{-\frac12} & \alpha\beta
\end{bmatrix},
\\
A_3 \! &:= \! \begin{bmatrix}
g^{-\frac12}(\nabla'\vb^T\nabla'\vb+\frac12(\nabla'\vy^T\nabla'\vd+\nabla'\vd^T\nabla'\vy))g^{-\frac12} & \frac12g^{-\frac12}(\nabla'\vd^T\vb+2\nabla'\vb^T\vd) \\
\frac12(\nabla'\vd^T\vb+2\nabla'\vb^T\vd)^Tg^{-\frac12} & \beta^2
\end{bmatrix}
\end{align*}
are independent of $x_3$ and
\[
\I[\vy] = \nabla'\vy^T\nabla'\vy
\quad \mbox{and} \quad
\II[\vy] = -\nabla'\vnu^T\nabla'\vy
\]
are the first and second fundamental forms of $\vy(\Omega)$, respectively. To evaluate the two terms on the right-hand side of \eqref{E:new-energydensity}, we split them into powers of $x_3$. We first deal with the pre-asymptotic regime, in which $s>0$ is small, and next we consider the asymptotic regime $s\to0$.

\medskip\noindent
{\bf Pre-asymptotics.} To compute $s^{-3}\int_{\Omega_s} \big| G^{-\frac12} \veps_{G}(\nabla\vu) G^{-\frac12}\big|^2$, we first note that
\[
\Big|G^{-\frac12} \veps_{G}(\nabla\vu) G^{-\frac12} \Big|^2
\!= \! \frac14 |A_1|^2  +  x_3 A_1 \!:\! A_2  + \frac{x_3^2}{2} A_1 \!:\! A_3
 + x_3^2|A_2|^2
+  \mathcal{O}(x_3^3),
\]
all the terms with odd powers of $x_3$ integrate to zero on $[-s/2,s/2]$, and those terms hidden in $\mathcal{O}(x_3^3)$ integrate to an $\mathcal{O}(s)$ contribution after rescaling by $s^{-3}$. We next realize that
\begin{align*}
  s^{-3} \int_{-s/2}^{s/2}  dx_3 \int_\Omega |A_1|^2 d\vx'
& = s^{-2} \int_\Omega \big|A_1\big|^2 d\vx'
\\
s^{-3} \int_{-s/2}^{s/2}  x_3^2 \, dx_3 \int_\Omega A_1 \!:\! A_3  d\vx' 
& = \frac{1}{12} \int_\Omega A_1 \!:\! A_3  d\vx'
\\
s^{-3} \int_{-s/2}^{s/2} x_3^2 \,  dx_3 \int_\Omega |A_2|^2 d\vx' 
& = \frac{1}{12} \int_\Omega \big|A_2\big|^2 d\vx',
\end{align*}
and exploit that $s^{-3}\int_{\Omega_s} \big| G^{-\frac12} \veps_{G}(\nabla\vu) G^{-\frac12}|^2\le\Lambda$ independent of $s$ to find that
\[
\Big| \int_\Omega A_1 \!:\! A_3  d\vx'  \Big| \le
s \, \Big( s^{-2} \int_\Omega |A_1|^2d\vx'  \Big)^{\frac12}
\Big(\int_\Omega |A_3|^2d\vx'  \Big)^{\frac12} \le C \Lambda^{\frac12} s
\]
is a higher order term because $\int_\Omega |A_3|^2d\vx'\le C^2$. We thus obtain the expression
\[
s^{-3}\int_{\Omega_s} \big| G^{\frac12} \veps_{G}(\nabla\vu) G^{\frac12}\big|^2
=
\frac{1}{4s^2} \int_\Omega \big|A_1\big|^2 d\vx'
+ \frac{1}{12} \int_\Omega \big|A_2\big|^2  d\vx'
+ \mathcal{O}(s).
\]
We proceed similarly with the second term in \eqref{E:new-energydensity} to arrive at
\begin{align*}
\tr\big( G^{-\frac12} \veps_{G}(\nabla\vu) G^{-\frac12} \big)^2
= &\frac14 \, \tr(A_1)^2 + x_3 \, \tr(A_1) \, \tr(A_2)
+ \frac12 \, x_3^2 \, \tr(A_1) \, \tr(A_3)
\\& + x_3^2 \, \tr(A_2)^2 + \mathcal{O}(x_3^3),
\end{align*}
and
\begin{equation*}
s^{-3}\int_{\Omega_s} \tr\big( G^{-\frac12} \veps_{G}(\nabla\vu) G^{-\frac12} \big)^2 =
\frac{1}{4s^2} \int_\Omega \tr\big(A_1\big)^2 d\vx'
+ \frac{1}{12} \int_\Omega \tr\big(A_2\big)^2 d\vx' + \mathcal{O}(s).
\end{equation*}
In view of \eqref{E:new-energydensity} and \eqref{def:EG}, we deduce that the rescaled elastic energy $s^{-3} E[\vu]\approx  E_s[\vy] + E_b[\vy]$ for $s$ small, where the two leading terms are the {\it stretching energy}
\begin{equation}\label{E:stretching}
E_s[\vy] = \frac{1}{8s^2}
\int_\Omega \Big(2\mu \big|A_1\big|^2
+\lambda \tr\big(A_1\big)^2 \Big) d\vx'
\end{equation}
and the {\it bending energy}
\begin{equation}\label{E:bending}
E_b[\vy] = \frac{1}{24}
\int_{\Omega}\Big(2\mu \big|A_2\big|^2
+\lambda\tr \big(A_2\big)^2\Big) d\vx'
\end{equation}
with $A_1$ and $A_2$ depending on $\I[\vy]$ and $\II[\vy]$, respectively.

\medskip\noindent
{\bf Asymptotics.} We now let the thickness $s\to0$ and observe that for the scaled energy to remain uniformy bounded, the integrant of the stretching energy must vanish with a rate at least $s^2$. By definition of $A_1$, this implies that the parametrization $\vy$ must satisfy the metric constraint $g^{-\frac12} \, \I[\vy] \, g^{-\frac12} = I_2$, or equivalently $\vy$ is an {\it isometric immersion} of $g$
\begin{equation}\label{eqn:2Dprestrain}
\nabla'\vy^T\nabla'\vy = g \quad \mbox{a.e. in } \Omega,
\end{equation}
and $\alpha^2\equiv 1$. Since $E_s[\vy]=0$, we can take the limit for $s\to0$ and neglect the higher order terms
to obtain the following expression for the reduced elastic energy
\begin{equation}\label{E:reduced_w}
\lim_{s\rightarrow 0}\frac{1}{s^3}\int_{\Omega_s}W(\nabla\vu G^{-\frac{1}{2}})d\vx
= \frac{1}{24}\int_{\Omega}\Big(\underbrace{2\mu|A_2|^2+\lambda\tr(A_2)^2}_{=:w(\beta)}\Big)d\vx',
\end{equation}
where, using the definition of $A_2$, $w(\beta)$ is given by 
\begin{equation*}
w(\beta)=2\mu|g^{-\frac{1}{2}} \, \II[\vy] \, g^{-\frac{1}{2}}|^2+2\mu \beta^2+\lambda(-\tr(g^{-\frac{1}{2}} \, \II[\vy] \, g^{-\frac{1}{2}})+\beta)^2
\end{equation*}
because $\alpha^2\equiv 1$. In order to obtain deformations with minimal energies, we now choose $\beta=\beta(\vx')$ such that $w(\beta)$ is minimized. Since
\begin{equation*}
\frac{dw}{d\beta}=4\mu\beta+2\lambda\Big(-\tr\big(g^{-\frac{1}{2}} \, \II[\vy] \, g^{-\frac{1}{2}}\big)+\beta \Big) = 0 \quad \mbox{and} \quad \frac{d^2w}{d\beta^2}=4\mu+2\lambda>0,
\end{equation*}
we get
\begin{equation*}
\beta=\frac{\lambda}{2\mu+\lambda}\tr\big(g^{-\frac{1}{2}} \, \II[\vy] \, g^{-\frac{1}{2}}\big),
\end{equation*}
which gives 
\begin{equation*}
w(\beta)=2\mu \big|g^{-\frac{1}{2}} \, \II[\vy] \, g^{-\frac{1}{2}}\big|^2+\frac{2\mu\lambda}{\lambda+2\mu}\tr\big(g^{-\frac12}\II[\vy]g^{-\frac12}\big)^2.
\end{equation*}
Finally, the right-hand side of \eqref{E:reduced_w} has to be supplemented with the forcing term that we have ignored in this derivation but scales correctly owing to definition \eqref{E:forcing}. In the sequel, we relabel the bending energy $E_b[\vy]$ as $E[\vy]$, add the forcing and replace $\vx'$ by $\vx$ (and drop the notation $'$ on differential operators)
\begin{equation}\label{E:final-bending}
E[\vy] = \frac{\mu}{12}\int_{\Omega}\Big(\big|g^{-\frac{1}{2}}\II[\vy] g^{-\frac{1}{2}}\big|^2+\frac{\lambda}{2\mu+\lambda}\tr\big(g^{-\frac{1}{2}}\II[\vy] g^{-\frac{1}{2}}\big)^2\Big)d\vx
- \int_\Omega \vf\cdot\vy d\vx.
\end{equation}
This formal procedure has been justified via $\Gamma$-convergence in \cite{frie2002a,frie2002b} for isometries $\I[\vy]=I_2$ and in \cite[Corollary 2.7]{lewicka2011}, \cite[Theorem 2.1]{lewicka2016} for isometric immersions $\I[\vy]=g$. Moreover, as already observed in \cite{frie2002b}, we mention that using the Kirchhoff-Love assumption \eqref{E:kirchhoff} instead \eqref{E:kirchhoff_quad} yields a similar bending energy, namely we obtain \eqref{E:final-bending} but with $\lambda$ instead of $\frac{\mu\lambda}{2\mu+\lambda}$.
%

\subsection{Admissibility}\label{S:admissibility}
%
We need to supplement \eqref{E:final-bending} with suitable boundary conditions for $\vy$ for the minimization problem to be well-posed. For simplicity, we consider Dirichlet and free boundary conditions in this paper, but other types of boundary conditions are possible. Let $\Gamma_D \subset \partial \Omega$ be a (possibly empty) open set on which the following Dirichlet boundary conditions are imposed:
\begin{equation}\label{eq:dirichlet}
\vy=\vvarphi \quad \mbox{and} \quad \nabla\vy=\Phi \quad \mbox{on } \Gamma_D,
\end{equation}
where $\vvarphi:\Omega\rightarrow\mathbb{R}^3$ and $\Phi:\Omega\rightarrow\mathbb{R}^{3\times 2}$ are sufficiently smooth and $\Phi$ satisfies the compatibility condition $\Phi^T\Phi=g$ a.e. in $\Omega$. The set of {\it admissible} functions is
\begin{equation} \label{def:admiss}
\A(\vvarphi,\Phi):=\left\{\vy\in \V(\vvarphi,\Phi): \, \nabla\vy^T\nabla\vy=\g \,\, \mbox{a.e. in } \Omega\right\},
\end{equation}
where the affine manifold $\V(\vvarphi,\Phi)$ of $H^2(\Omega)$ is defined by
\begin{equation} \label{def:space_BC}
\V(\vvarphi,\Phi):=\left\{\vy\in [H^2(\Omega)]^3: \, \restriction{\vy}{\Gamma_D}=\vvarphi, \, \restriction{\nabla\vy}{\Gamma_D}=\Phi\right\}.
\end{equation}
Our goal is to obtain
\begin{equation} \label{prob:min_Eg}
\vy^*:=\textrm{argmin}_{\vy\in\A(\vvarphi,\Phi)} E(\vy),
\end{equation}
but this minimization problem is highly nonlinear and seems to be out of reach both analytically and geometrically. In fact, whether or not there exists a smooth {\it global} deformation $\vy$ from $\Omega\subset\mathbb{R}^n$ into $\mathbb{R}^N$ satisfying the metric constraint \eqref{eqn:2Dprestrain}, a so-called {\it isometric immersion}, is a long standing problem in differential geometry \cite{han2006isometric}. Note that $\nabla\vy$ is full rank if $\vy$ is an isometric immersion; if in addition $\vy$ is injective, then we say that $\vy$ is an {\it isometric embedding.} For $n=2$, Nash's theorem guarantees that an isometric embedding exists for $N=10$ (Nash proved it for $N=17$, while it was further improved to $N=10$ by Gromov \cite{gromov1986}). When $N=3$, as in our context, a given metric $g$ may or may not admit an isometric immersion. Some elliptic and hyperbolic metrics with special assumptions have isometric immersions in $\mathbb{R}^3$ \cite{han2006isometric}.
We assume implicitly below that $\A(\vvarphi,\Phi)$ is non-empty, thus there exists an isometric immersion that satisfies boundary conditions, but now we discuss an illuminating example in polar coordinates \cite{efrati2011hyperbolic, poznyak1995small}.

\medskip\noindent
{\bf Change of variables and polar coordinates.}
If $\vzeta=(\zeta_1,\zeta_2):\widetilde{\Omega}\to\Omega$ is a
change of variables $\vxi\mapsto\bx$
into Cartesian coordinates $\bx=(x_1,x_2)\in\Omega$ and $\vJ(\vxi)$ is the Jacobian matrix, then
the target metrics $\widetilde{g}(\vxi)$ and $g(\bx)=g(\vzeta(\vxi))$ satisfy
\begin{equation}\label{E:Jacobian}
\widetilde{g}(\vxi) = \vJ(\vxi)^T g(\vzeta(\vxi)) \vJ(\vxi),
\quad
\vJ(\vxi) =
\begin{bmatrix}
  \partial_{\xi_1} \zeta_1(\vxi) & \partial_{\xi_2} \zeta_1(\vxi)
  \\
  \partial_{\xi_1} \zeta_2(\vxi) & \partial_{\xi_2} \zeta_2(\vxi)
\end{bmatrix}.
\end{equation}
Let $\vxi=(r,\theta)$ indicate polar coordinates with $r\in I=[0,R]$ and $\theta\in[0,2\pi)$. If $g=I_2$ is the identity matrix (i.e., $\I[\vy]=I_2$) and $\eta(r)=r$, then
$\widetilde{g}(\vxi)$ reads
\begin{equation}\label{E:metric-eta}
  \widetilde{g}(r,\theta) =
  \begin{bmatrix}
    1 & 0 \\ 0 & \eta(r)^2
  \end{bmatrix}.
\end{equation}

We now show that some metrics of the form of \eqref{E:metric-eta} with $\eta(r)\ne r$ are still isometric immersible provided $\eta$ is sufficiently smooth. Consider the case
$|\eta'(r)|\le1$ along with the parametrization
\begin{equation}\label{E:embedding}
\widetilde{\vy}(r,\theta) = (\eta(r) \cos\theta, \eta(r) \sin\theta, \psi(r) )^T.
\end{equation}
Since $\partial_r\widetilde{\vy}\cdot\partial_\theta\widetilde{\vy}=0$ and $|\partial_\theta\widetilde{\vy}|^2 = \eta(r)^2$,
if $\psi$ satisfies $|\partial_r \widetilde{\vy}|^2 = \eta'(r)^2 + \psi'(r)^2 = 1$, we
realize that $\widetilde{\vy}$ is an isometric
embedding compatible with \eqref{E:metric-eta}. On the other hand, if $|\eta'(r)|\ge1$
and $a \ge \max_{r\in I} |\eta'(r)|$ is an integer, then the parametrization
\begin{equation}\label{E:immersion}
  \widetilde{\vy}(r,\theta) = \Big( \frac{\eta(r)}{a} \cos (a\theta),
  \frac{\eta(r)}{a} \sin (a\theta),
  \int_0^r \sqrt{1-\frac{\eta'(t)^2}{a^2}} dt \Big)^T
\end{equation}
is an isometric immersion compatible with \eqref{E:metric-eta} but not an
isometric embedding. We will construct in Section \ref{S:gel-discs} a couple of
isometric embeddings computationally. 

We also point out that \eqref{E:metric-eta} accounts for {\it shrinking}
if $0\le\eta(r)<r$ and {\it stretching} if $\eta(r)>r$. To see this,
let $\gamma_r(\theta) = (r,\theta)^T$, $\theta \in [0,2\pi)$, be the parametrization of a circle in
$\Omega$ centered at the origin and of radius $r$, and let
$\Gamma_r(\theta) = \widetilde{\vy}(\gamma_r(\theta))$ be its image on $\widetilde{\vy}(\widetilde{\Omega})=\vy(\Omega)$.
The length $\ell(\Gamma_r)$ satisfies
\[
\ell(\Gamma_r) \!=\! \int_0^{2\pi} \Big| \frac{d}{d\theta} \Gamma_r(\theta) \Big| d\theta
=\! \int_0^{2\pi} \sqrt{\gamma_r'(\theta)^T \widetilde{g}(r,\theta) \gamma_r'(\theta)} d\theta
=\! \int_0^{2\pi} \eta(r) d\theta = \ell(\gamma_r) \frac{\eta(r)}{r},
\]
and the ratio $\eta(r)/r$ acts as a shrinking/stretching parameter. 

\medskip\noindent
{\bf Gaussian curvature.}
Since $E[\vy]>0$ provided that the Gaussian curvature $\kappa = \det (\II[\vy])\det (\I[\vy])^{-1}$ of the surface $\vy(\Omega)$ does not vanish identically \cite{lewicka2011,lewicka2016}, it is instructive to find $\kappa$ for a deformation $\widetilde{\vy}$ so that $\I[\widetilde{\vy}]=\widetilde{g}$ is given by \eqref{E:metric-eta}. Since the formula for change of variables for $\II[\widetilde{\vy}]$ is the same as that in \eqref{E:Jacobian} for $\widetilde{g}=\I[\widetilde{\vy}]$, we realize that $\kappa$ is independent of the parametrization of the surface. According to Gauss's Theorema Egregium, $\kappa = \det (\II[\widetilde{\vy}])\det (\I[\widetilde{\vy}])^{-1}$ can be rewritten as an expression solely depending on $\I[\widetilde{\vy}]$. Do Carmo gives an explicit formula for $\kappa$ in case $\widetilde{g}=\I[\widetilde{\vy}]$ is diagonal \cite[Exercise 1, p.237]{carmo1976}, which reduces to
\begin{equation}\label{E:Gauss-curvature}
\kappa = -\frac{\eta''(r)}{\eta(r)}
\end{equation}
for $\widetilde{g}$ of the form \eqref{E:metric-eta}. Alternatively, we may express
$\II[\widetilde{\vy}]_{ij} = \partial_{ij} \widetilde{\vy}\cdot\widetilde{\vnu}$, where $\widetilde{\vnu}(r,\theta)$ is the unit normal vector to the surface $\widetilde{\vy}(\widetilde{\Omega})$ at the point $\widetilde{\vy}(r,\theta)$, in terms of the orthonormal basis
$\{\widetilde{\vnu},\partial_r\widetilde{\vy},\eta(r)^{-1}\partial_\theta\widetilde{\vy}\}$ as follows. First observe that
\begin{align*}
  |\partial_r\widetilde{\vy}|^2=1 
  \quad &\Rightarrow\quad
  \partial_{rr}\widetilde{\vy}\cdot\partial_r\widetilde{\vy}=0,
  \quad
  \partial_{\theta r}\widetilde{\vy} \cdot \partial_r\widetilde{\vy} = 0,
  \\
  |\partial_\theta \widetilde{\vy}|^2 = \eta^2(r)
  \quad &\Rightarrow\quad
  \partial_{r\theta}\widetilde{\vy}\cdot\partial_\theta \widetilde{\vy}=\eta(r)\eta'(r),
  \quad
  \partial_{\theta\theta}\widetilde{\vy} \cdot\partial_\theta\widetilde{\vy}=0,
  \\
  \partial_r\widetilde{\vy}\cdot\partial_\theta\widetilde{\vy} = 0
  \quad &\Rightarrow\quad
  \partial_{rr}\widetilde{\vy} \cdot \partial_\theta \widetilde{\vy} = 0.
\end{align*}
This yields
\[
\partial_{rr}\widetilde{\vy} = (\partial_{rr}\widetilde{\vy}\cdot\widetilde{\vnu}) \widetilde{\vnu},
\quad
\partial_{\theta\theta} \widetilde{\vy} = (\partial_{\theta\theta}\widetilde{\vy}\cdot\widetilde{\vnu}) \widetilde{\vnu}
+ (\partial_{\theta\theta}\widetilde{\vy}\cdot\partial_r\widetilde{\vy})\partial_r\widetilde{\vy},
\]
whence
\[
\II[\widetilde{\vy}]_{rr} \II[\widetilde{\vy}]_{\theta\theta} = (\partial_{rr}\widetilde{\vy}\cdot\widetilde{\vnu}) (\partial_{\theta\theta}\widetilde{\vy}\cdot\widetilde{\vnu}) = \partial_{rr}\widetilde{\vy} \cdot \partial_{\theta\theta} \widetilde{\vy}.
\]
We next differentiate $\partial_{rr}\widetilde{\vy} \cdot \partial_\theta \widetilde{\vy} = 0$ and
$\partial_{r\theta}\widetilde{\vy}\cdot\partial_\theta \widetilde{\vy}=\eta(r)\eta'(r)$
with respect to $\theta$ and $r$, respectively, to obtain
\[
\partial_{rr}\widetilde{\vy}\cdot\partial_{\theta\theta}\widetilde{\vy} =
\partial_{r\theta}\widetilde{\vy} \cdot \partial_{r\theta}\widetilde{\vy} - \eta'(r)^2 - \eta(r)\eta''(r).
\]
We finally notice that $\partial_{r\theta}\widetilde{\vy} = (\partial_{r\theta}\widetilde{\vy}\cdot\widetilde{\vnu})\widetilde{\vnu}
+ \frac{\eta'(r)}{\eta(r)} \partial_\theta \widetilde{\vy}$, whence
\[
\big(\II[\widetilde{\vy}]_{r\theta}\big)^2 = (\partial_{r\theta}\widetilde{\vy}\cdot\widetilde{\vnu})^2
= \partial_{r\theta}\widetilde{\vy} \cdot \partial_{r\theta}\widetilde{\vy} - \eta'(r)^2.
\]
Therefore, we have derived
$\det\II[\widetilde{\vy}]= \II[\widetilde{\vy}]_{rr} \II[\widetilde{\vy}]_{\theta\theta} - \big(\II[\widetilde{\vy}]_{r\theta}\big)^2
= -\eta(r)\eta''(r)$ and as $\det\I[\widetilde{\vy}]=\eta(r)^2$, we obtain \eqref{E:Gauss-curvature}. This expression will be essential in Section \ref{S:gel-discs}.

\subsection{Alternative energy}\label{S:simplified-energy}
%
The expression \eqref{E:final-bending} involves the second fundamental form $\II[\vy] = -\nabla \vnu^T \nabla \vy = (\partial_{ij}\vy\cdot\vnu)_{ij=1}^2$ and is too nonlinear to be practically useful. To render \eqref{prob:min_Eg} amenable to computation, we show now that $\II[\vy]$ can be replaced by the Hessian $D^2\vy$ without affecting the minimizers. This is the subject of next proposition, which uses the notation \eqref{e:notation_terms} for $g^{-1/2} D^2\vy g^{-1/2}$.

\begin{prop}[alternative energy] \label{prop:link_II_D2y}
  Let $\vy=(y_k)_{k=1}^3:\Omega\rightarrow\mathbb{R}^3$ be a sufficiently smooth orientable deformation and let $g=\I[\vy]$ and $\II[\vy]$ be the first and second fundamental forms of $\vy(\Omega)$. Then, there exist functions $f_1,f_2:\Omega\rightarrow\mathbb{R}_{\geq 0}$ depending only on $g$ and its derivatives, with precise definitions given in the proof, such that
\begin{equation} \label{eqn:link_part1}
\big|\g^{-\frac{1}{2}} \, D^2 \vy \, \g^{-\frac{1}{2}} \big|^2 = \big|\g^{-\frac{1}{2}} \, \II[\vy] \, \g^{-\frac{1}{2}} \big|^2 + f_1,
\end{equation}
and
\begin{equation} \label{eqn:link_part2}
\big|\tr \big(\g^{-\frac{1}{2}} \, D^2\vy \, \g^{-\frac{1}{2}} \big)\big|^2 = \tr \big(\g^{-\frac{1}{2}} \, \II[\vy] \, \g^{-\frac{1}{2}} \big)^2 + f_2.
\end{equation}	
\end{prop}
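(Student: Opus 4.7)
The plan is to invoke the Gauss formula, which decomposes the Hessian $\partial_{ij}\vy$ in the local frame $\{\partial_1\vy, \partial_2\vy, \vnu\}$ of $\mathbb{R}^3$. Smoothness and orientability of $\vy$ ensure that the tangent vectors $\partial_1\vy, \partial_2\vy$ are linearly independent and the unit normal $\vnu$ is well defined. Differentiating $g_{jk} = \partial_j\vy \cdot \partial_k\vy$ and taking a cyclic combination extracts the Christoffel symbols of the first kind
\[
\Gamma_{ijk} := \partial_{ij}\vy \cdot \partial_k\vy = \tfrac12\bigl(\partial_i g_{jk} + \partial_j g_{ik} - \partial_k g_{ij}\bigr),
\]
which depend only on $g$ and its first derivatives. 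Together with $\partial_{ij}\vy \cdot \vnu = \II[\vy]_{ij}$ (obtained by differentiating $\vnu \cdot \partial_i\vy = 0$) and setting $\Gamma^l_{ij} := (g^{-1})_{lk}\Gamma_{ijk}$, this yields, under the summation convention, the Gauss decomposition
\[
\partial_{ij}\vy = \Gamma^l_{ij}\,\partial_l\vy + \II[\vy]_{ij}\,\vnu.
\]
Using $\vnu \cdot \partial_l\vy = 0$ and $\partial_a\vy \cdot \partial_b\vy = g_{ab}$, I obtain the key pointwise identity
\[
\partial_{ij}\vy \cdot \partial_{kl}\vy = \Gamma^a_{ij}\Gamma^b_{kl}\,g_{ab} + \II[\vy]_{ij}\II[\vy]_{kl}.
\]

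Next I would rewrite both sides of \eqref{eqn:link_part1} and \eqref{eqn:link_part2} in index notation. Symmetry of $g^{-1/2}$ and of $D^2 y_m$ gives $|g^{-1/2} D^2 y_m g^{-1/2}|^2 = \tr(g^{-1}D^2 y_m g^{-1} D^2 y_m) = (g^{-1})_{ij}(g^{-1})_{pq}\partial_{jp}y_m\partial_{qi}y_m$; summing over $m = 1,2,3$ and inserting $\sum_m \partial_{jp}y_m\partial_{qi}y_m = \partial_{jp}\vy\cdot\partial_{qi}\vy$ yields
\[
\bigl|g^{-\tfrac12} D^2\vy\, g^{-\tfrac12}\bigr|^2 = (g^{-1})_{ij}(g^{-1})_{pq}\,\partial_{jp}\vy\cdot\partial_{qi}\vy.
\]
Analogously, $\tr(g^{-1/2} D^2 y_m g^{-1/2}) = (g^{-1})_{ij}\partial_{ij}y_m$, so squaring and summing over $m$ produces
\[
\bigl|\tr(g^{-\tfrac12} D^2\vy\, g^{-\tfrac12})\bigr|^2 = (g^{-1})_{ij}(g^{-1})_{pq}\,\partial_{ij}\vy\cdot\partial_{pq}\vy.
\]
Substituting the Gauss identity above into both right-hand sides cleanly splits each into an $\II[\vy]$-contribution plus a pure-$g$ remainder.

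The $\II[\vy]$-contributions are $(g^{-1})_{ij}(g^{-1})_{pq}\II[\vy]_{jp}\II[\vy]_{qi}$ and $(g^{-1})_{ij}(g^{-1})_{pq}\II[\vy]_{ij}\II[\vy]_{pq}$, which by symmetry of $\II[\vy]$ and $g^{-1}$ collapse to $|g^{-1/2}\II[\vy]g^{-1/2}|^2$ and $[\tr(g^{-1/2}\II[\vy]g^{-1/2})]^2$, respectively. This delivers \eqref{eqn:link_part1}--\eqref{eqn:link_part2} with
\[
f_1 := (g^{-1})_{ij}(g^{-1})_{pq}\,g_{ab}\,\Gamma^a_{jp}\Gamma^b_{qi},
\qquad
f_2 := (g^{-1})_{ij}(g^{-1})_{pq}\,g_{ab}\,\Gamma^a_{ij}\Gamma^b_{pq},
\]
both functions of $g$ and $\nabla g$ alone. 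Nonnegativity is straightforward: for $f_2$, define $V^a := (g^{-1})_{ij}\Gamma^a_{ij}$ so that $f_2 = g_{ab}V^aV^b \ge 0$ by positive definiteness of $g$; for $f_1$, the matrix $G^{ab} := (g^{-1})_{ij}(g^{-1})_{pq}\Gamma^a_{jp}\Gamma^b_{qi}$ is a Gram matrix in the positive definite inner product $\langle A,B\rangle_{g^{-1}} := \tr(g^{-1}Ag^{-1}B)$ on symmetric matrices, hence PSD, so $f_1 = \tr(g\,G) \ge 0$ (trace of a product of PSD matrices with one of them PD). The only delicate step is the index bookkeeping identifying the contracted $\II[\vy]$ expressions with $|g^{-1/2}\II[\vy]g^{-1/2}|^2$ and its squared trace analogue; beyond that, the proof is a direct unwinding of the Gauss formula.
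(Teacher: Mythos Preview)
Your proof is correct and follows essentially the same approach as the paper: both hinge on the Gauss decomposition $\partial_{ij}\vy = \Gamma^l_{ij}\partial_l\vy + \II[\vy]_{ij}\vnu$ together with the orthogonality $\vnu\cdot\partial_l\vy=0$, which splits the squared norms into the $\II[\vy]$-part plus a Christoffel remainder depending only on $g$. The only cosmetic difference is that the paper works entrywise with $a=g^{-1/2}$ and the vectors $(aD^2\vy\,a)_{ij}\in\mathbb{R}^3$, whereas you first pass to the scalar identity $\partial_{ij}\vy\cdot\partial_{kl}\vy = g_{ab}\Gamma^a_{ij}\Gamma^b_{kl}+\II_{ij}\II_{kl}$ and then contract with $g^{-1}$ via $|g^{-1/2}Ag^{-1/2}|^2=\tr(g^{-1}Ag^{-1}A)$; your explicit verification of $f_1,f_2\ge 0$ is a nice addition that the paper leaves implicit.
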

\begin{proof}
First of all, because $\vy$ is smooth and orientable, the second derivatives $\partial_{ij}\vy$ of the deformation $\vy$ can be (uniquely) expressed in the basis $\{\partial_1\vy,\partial_2\vy,\vnu\}$ as
\begin{equation} \label{eqn:Christoffel}
\partial_{ij}\vy = \sum_{l=1}^2\Gamma_{ij}^l \, \partial_l\vy+\II_{ij}[\vy] \, \vnu,
\end{equation}
where $\frac{\partial_1 \vy \times \partial_2 \vy}{|\partial_1 \vy \times \partial_2 \vy|}$ is the unit normal and $\Gamma_{ij}^l$ are the so-called Christoffel symbols of $\vy(\Omega)$. Since $\Gamma_{ij}^l$ are intrinsic quantitites, they can be computed in terms of the coefficients $g_{ij}$ of $g$ and their derivatives \cite{carmo1976}; they do not depend explicitly on $\vy$.

We start with the proof of relation (\ref{eqn:link_part1}). To simplify the notation, let us write $a=\g^{-\frac{1}{2}}$. Using (\ref{eqn:Christoffel}) we get
\begin{align*}
(a \, \II[\vy] \, a)_{ij} \, \vnu & =\sum_{m,n=1}^2a_{im} \big(\II_{mn}[\vy] \, \vnu \big) \, a_{nj} \\
	& = \sum_{m,n=1}^2a_{im}(\partial_{mn}\vy)a_{nj} - \sum_{m,n=1}^2a_{im}\left(\sum_{l=1}^2\Gamma_{mn}^l\partial_l\vy\right) a_{nj},
\end{align*}
or equivalently, rearranging the above expression,
\begin{equation*}
(a \,D^2 \vy \, a)_{ij}= (a \,\II[\vy] \, a)_{ij}\vnu + \sum_{m,n=1}^2a_{im}\left(\sum_{l=1}^2\Gamma_{mn}^l\partial_l\vy\right) a_{nj}.
\end{equation*} 
Since the unit vector $\vnu$ is orthogonal to both $\partial_1\vy$ and $\partial_2\vy$, the right-hand side is an $l_2$-orthogonal decomposition. Computing the square of the $l_2$-norms yields
\begin{equation} \label{eqn:part1_ij}
\sum_{k=1}^3(a \, D^2y_k \, a)_{ij}^2 = (a \, \II[\vy] \, a)_{ij}^2 + f_{ij}
\end{equation}
with
\begin{equation*}
f_{ij} := \sum_{l_1,l_2=1}^2g_{l_1l_2}\sum_{m_1,m_2,n_1,n_2=1}^2a_{im_1}a_{im_2}\Gamma_{m_1n_1}^{l_1}\Gamma_{m_2n_2}^{l_2}a_{n_1j}a_{n_2j}.
\end{equation*}
Functions $f_{ij}$ do not depend explicitly on $\vy$ but on $g$ and first derivatives of $g$. Therefore, summing \eqref{eqn:part1_ij} over $i,j$ from $1$ to $2$ gives \eqref{eqn:link_part1} with $f_1:=\sum_{i,j=1}^2f_{ij}$.

The proof of (\ref{eqn:link_part2}) is similar. Since $\tr(a\,\II[\vy] \, a)\,\vnu = \sum_{i=1}^2(a \, \II[\vy] \, a)_{ii} \,\vnu$ it suffices to take $i=j$ and sum over $i$ in the previous derivation to arrive at \eqref{eqn:link_part2} with
\begin{equation*}
f_2:=\sum_{l_1,l_2=1}^2g_{l_1l_2}\sum_{i_1,i_2,m_1,m_2,n_1,n_2=1}^2a_{i_1m_1}a_{i_2m_2}\Gamma_{m_1n_1}^{l_1}\Gamma_{m_2n_2}^{l_2}a_{n_1i_1}a_{n_2i_2}.
\end{equation*}
This completes the proof because $f_2$ does not dependent explicitly on $\vy$.
\end{proof}

\begin{remark}[alternative energy]
As stated, Proposition~\ref{prop:link_II_D2y} is valid for smooth deformations $\vy$ and metric $g$.
It turns out that for $\vy \in [H^2(\Omega)]^3$ and $g \in [H^1(\Omega) \cap L^\infty(\Omega)]^{2\times 2}$, the key relation \eqref{eqn:Christoffel} holds a.e. in $\Omega$ and so does the conclusion of Proposition~\ref{prop:link_II_D2y}.
For the interested reader, we refer to \cite{BGNY2020}.
\end{remark}

Proposition \ref{prop:link_II_D2y} (alternative energy) shows that the solutions of \eqref{prob:min_Eg} with the energy $E[\vy]$ given by \eqref{E:final-bending} are the same as those given by the energy
\begin{equation} \label{def:Eg_D2y}
E(\vy):=\frac{\mu}{12} \int_{\Omega}\left(\Big|\g^{-\frac{1}{2}} \, D^2\vy \, \g^{-\frac{1}{2}}\Big|^2+\frac{\lambda}{2\mu+\lambda} \left|\tr\big(\g^{-\frac{1}{2}} \, D^2\vy \, \g^{-\frac{1}{2}}\big)\right|^2\right)-\int_{\Omega}\vf\cdot\vy.
\end{equation}
The Euler-Lagrange equations characterizing local extrema $\vy\in[H^2(\Omega)]^3$ of \eqref{def:Eg_D2y}
\begin{equation} \label{equ:Gateaux}
\delta E[\vy;\vv]=0 \quad \forall \vv\in[H^2(\Omega)]^3,
\end{equation}
can be written in terms of the first variation of $E[\vy]$ in the direction $\vv$ given by
\begin{equation} \label{def:Diff_E_D2y}
\begin{split}
\delta E[\vy;\vv] :=& \frac{\mu}{6} \int_{\Omega}\big(\g^{-\frac{1}{2}} \, D^2\vy\, \g^{-\frac{1}{2}} \big): \big(\g^{-\frac{1}{2}} \, D^2\vv \, \g^{-\frac{1}{2}} \big) \\
& + \frac{\mu\lambda}{6(2\mu+\lambda)}
 \int_{\Omega}\tr \big(\g^{-\frac{1}{2}} \, D^2\vy \, \g^{-\frac{1}{2}} \big) \cdot
\tr \big(\g^{-\frac{1}{2}} \, D^2\vv \, \g^{-\frac{1}{2}} \big) -\int_{\Omega}\vf\cdot\vv.
\end{split}
\end{equation}
The presence of the trace term in \eqref{def:Diff_E_D2y} makes it problematic to find the governing partial differential equation hidden in \eqref{equ:Gateaux} (strong form).
However, when $\lambda=0$, integration by parts shows that $P_k:=\g^{-1} \, D^2y_k \, \g^{-1}\in\mathbb{R}^{2 \times 2}$ for $k=1,2,3$ satisfies
\begin{equation*}
  \delta E[\vy;\vv] = \frac{\mu}{6} \sum_{k=1}^3 \left( \int_{\Omega}\di\di P_k \,v_k \!-\! \int_{\partial \Omega} \di P_k\cdot\vn v_k +  \int_{\partial \Omega} P_k\vn \cdot\nabla v_k\right)
\!-\! \int_\Omega \vf \cdot \vv,
\end{equation*}
where $\vn$ is the outwards unit normal vector to $\partial\Omega$.
On the other hand, if 
$g=\Id_2$ in which case $\vy$ is an {\it isometry}, then $E[\vy]$ in \eqref{E:final-bending} and \eqref{def:Eg_D2y} are equal and reduce to
\begin{equation}\label{E:g=1}
E[\vy]=\frac{\alpha}{2}\int_{\Omega}|D^2\vy|^2-\int_{\Omega}\vf\cdot\vy, \quad \alpha:=\frac{\mu(\mu+\lambda)}{3(2\mu+\lambda)}
\end{equation}
thanks to the relations for isometries \cite{bartels2013,bonito2015,bonito2018}
\begin{equation}\label{E:bilaplacian}
|\II[\vy]|=|D^2\vy|=|\Delta\vy|=\tr(\II[\vy]).
\end{equation}
The strong form of the Euler-Lagrange equation for a minimizer of \eqref{E:g=1} reads
$\alpha \di\di D^2 \vy = \alpha \Delta^2 \vy = \vf$. This problem has been studied
numerically in \cite{bartels2013,bonito2018}. \looseness=-1

\section{Numerical scheme} \label{sec:method}

We propose here a {\it local discontinuous Galerkin} (LDG) method to approximate the solution of the problem \eqref{prob:min_Eg}. LDG is inspired by, and in fact improves upon, the previous dG methods \cite{bonito2020discontinuous,bonito2018} but they are conceptually different. LDG hinges on the explicit computation of a discrete Hessian $H_h[\vy_h]$ for the discontinuous piecewise polynomial approximation $\vy_h$ of $\vy$, which allows for a direct discretization of $E_h[\vy_h]$ in \eqref{def:Eg_D2y}, including the trace term. We refer to the companion paper \cite{BGNY2020} for a discussion of convergence of discrete global minimizers of $E_h$ towards those of $E$; a salient feature is that the stability of the LDG method is retained even when the penalty parameters are arbitrarily small.

We organize this section as follows. In Section \ref{subsec:LDG} we introduce the finite dimensional space $\V_h^k$ of discontinuous piecewise polynomials of degree $k\ge2$, along with the discrete Hessian $H_h[\vy_h]$. We also discuss the discrete counterparts $E_h$ and $\A_{h,\eps}^k(\vvarphi,\Phi)$ of the energy $E$ and the admissible set $\A(\vvarphi,\Phi)$, respectively. In Section \ref{subsec:GF}, we present a discrete gradient flow to minimize the energy $E_h$. Finally, in Section \ref{subsec:preprocess}, we show how to prepare suitable initial conditions for the gradient flow (preprocessing).

\subsection{LDG-type discretization} \label{subsec:LDG}
From now on, we assume that $\Omega \subset \mathbb R^2$ is a polygonal domain. Let $\{\Th\}_{h>0}$ be a shape-regular but possibly graded elements $\K$, either triangles or quadrilaterals, of diameter $h_{\K} := \textrm{diam}(\K)\leq h$. In order to handle hanging nodes (necessary for graded meshes based on quadrilaterals), we assume that all the elements within each domain of influence have comparable diameters. We refer to Sections 2.2.4 and 6 of Bonito-Nochetto \cite{BoNo} for precise definitions and properties. At this stage, we only point out that sequences of subdivisions made of quadrilaterals with at most one hanging node per side satisfy this assumption.

Let $\Eh=\Eh^0\cup\Eh^b$ denote the set of edges, where $\Eh^0$ stands for the set of interior edges and $\Eh^{b}$ for the set of boundary edges. We assume a compatible representation of the Dirichlet boundary $\Gamma_D$, i.e., if $\Gamma_D \not = \emptyset$ then $\Gamma_D$ is the union of (some) edges in $\Eh^{b}$ for every $h>0$, which we indicate with $\Eh^{D}$; note that $\Gamma_D$ and $\Eh^{D}$ are empty sets when dealing with a problem with free boundary conditions. Let $\Eh^a:=\Eh^0\cup\Eh^{D}$ the set of {\it active edges} on which jumps and averages will be computed. The union of these edges give rise to the corresponding skeletons of $\Th$
\begin{equation}\label{E:skeleton}
  \Gh^0 := \cup \big\{e: e\in\Eh^0 \big\},
    \quad
  \Gh^D := \cup \big\{e: e\in\Eh^D \big\},
     \quad
  \Gh^a := \Gh^0 \cup \Gh^D.
\end{equation}
If $h_e$ is the diameter of $e\in\Eh$, then $\h$ is the piecewise constant mesh function
\begin{equation} \label{eqn:mesh_function}
\h:\E_h \rightarrow \mathbb R_+, \qquad  \restriction{\h}{e}:= h_e \quad  \forall e\in\Eh.
\end{equation}
From now on, we use the notation $(\cdot,\cdot)_{L^2(\Omega)}$ and $(\cdot,\cdot)_{L^2(\Gh^a)}$ to denote the $L^2$ inner products over $\Omega$ and $\Gh^a$, and a similar notation for subsets of  $\Omega$ and $\Gh^a$.

\medskip\noindent
{\bf Broken spaces.}
For an integer $k\geq 0$, we let $\mathbb{P}_k$ (resp. $\mathbb{Q}_k$) be the space of polynomials of total degree at most $k$ (resp. of degree at most $k$ in the each variable).   
The reference unit triangle (resp. square) is denoted by $\widehat\K$ and for $\K\in \mathcal T_h$, we let $F_\K:\widehat\K \rightarrow \K$ be the generic map from the reference element to the physical element. 
When $\mathcal T_h$ is made of triangles the map is affine, i.e., $F_\K \in \mathbb [\mathbb P_1]^2$,  while $F_\K \in [\mathbb Q_1]^2$ when quadrilaterals are used.
  
If $k \ge 2$, the (\textit{broken}) finite element space $\V_h^k$ to approximate each component of the deformation $\vy$ (modulo boundary conditions) reads
\begin{equation} \label{def:Vhk_tri}
  \V_h^k:=\left\{v_h\in L^2(\Omega): \,\, \restriction{v_h}{\K}\circ F_{\K}\in\mathbb{P}_k
  \quad(\textrm{resp. } \mathbb{Q}_k) \quad \forall \K \in\Th \right\}
\end{equation}
if $\Th$ is made of triangles (resp. quadrilaterals).
We define the broken gradient $\nabla_h v_h$ of $v_h\in\V_h^k$ to be the gradient computed elementwise, and use similar notation for other piecewise differential operators such as the broken Hessian $D_h^2 v_h=\nabla_h\nabla_h v_h$.

We now introduce the jump and average operators. To this end, let $\vn_e$ be a unit normal to $e\in\Eh^0$ (the orientation is chosen arbitrarily but is fixed once for all), while for a boundary edge $e\in\Eh^b$, $\vn_e$ is the outward unit normal vector to $\partial\Omega$. For $v_h \in \V_h^k$ and $e \in \E_h^0$, we set
\begin{equation} \label{def:jump}
  \restriction{\jump{v_h}}{e} := v_h^{-}-v_h^+,
  \quad
  \restriction{\jump{\nabla_h v_h}}{e} := \nabla_h v_h^{-} - \nabla_h v_h^+,
  \quad
\end{equation}
where $v_h^{\pm}(\vx):=\lim_{s\rightarrow 0^+}v_h(\vx\pm s\vn_e)$ for $\vx \in e$. We compute the jumps componentwise provided the function $v_h$ is vector or matrix-valued. In what follows, the subindex $e$ is omitted when it is clear from the context.

In order to deal with Dirichlet boundary data $(\vvarphi,\Phi)$ we resort to a Nitsche approach; hence we do not impose essential restrictions on the discrete space $[\V_h^k]^3$. However, to simplify the notation later, it turns out to be convenient to introduce the discrete sets $\V_h^k(\vvarphi,\Phi)$ and $\V_h^k(\bz,\bz)$ which mimic the continuous counterparts $\V(\vvarphi,\Phi)$ and $\V(\bz,\bz)$ but coincide with $[\V_h^k]^3$.
In fact, we say that $\vv_h\in[\V_h^k]^3$ belongs to $\V_h^k(\vvarphi,\Phi)$ provided the boundary jumps of $\vv_h$ are defined to be
\begin{equation}\label{E:bd-jumps}
  [\vv_h]_e := \vv_h - \vvarphi,
  \quad
  [\nabla_h \vv_h]_e := \nabla_h \vv_h - \Phi,
  \quad \forall \, e\in\Eh^D.
\end{equation}
We stress that $\|[\vv_h]\|_{L^2(\Gamma_h^D)} \to 0$ and $\|[\nabla_h \vv_h]\|_{L^2(\Gamma_h^D)} \to 0$ imply $\vv_h\to \vvarphi$ and $\nabla_h \vv_h \to\Phi$ in $L^2(\Gamma_D)$ as $h\to0$; hence the connection between $\V_h^k(g,\Phi)$ and $\V(g,\Phi)$. Therefore, we emphasize again that the sets $[\V_h^k]^3$ and $\V_h^k(\vvarphi,\Phi)$ coincide but the latter carries the notion of boundary jump, namely
\begin{equation}\label{discrete-set}
  \V_h^k (\vvarphi,\Phi) := \Big\{ \vv_h\in [\V_h^k]^3: \
  [\vv_h]_e, \, [\nabla_h \vv_h]_e \text{ given by \eqref{E:bd-jumps} for all } e\in\Eh^D   \Big\}.
\end{equation}
When free boundary conditions are imposed, i.e., $\Gamma_D = \emptyset$, then we do not need to distinguish between $\V^k_h(\vvarphi,\Phi)$ and $[\V_h^k]^3$. However, we keep the notation $\V^k_h(\vvarphi,\Phi)$ in all cases thereby allowing for a uniform presentation.
 
We define the {\it average} of $v_h \in \V_h^k$ across an edge $e\in \Eh$ to be
\begin{equation} \label{def:avrg}
\avrg{v_h}_{e} := 
\left\{\begin{array}{ll}
\frac{1}{2}(v_h^{+}+v_h^{-}) & e\in\Eh^0 \\
v_h^{-} & e\in\Eh^b,
\end{array}\right. 
\end{equation}
and apply this definition componentwise to vector and matrix-valued functions.
As for the jump notation, the subindex $e$ is drop when it is clear from the context.

\medskip\noindent
{\bf Discrete Hessian.}
To approximate the elastic energy \eqref{def:Eg_D2y}, we propose an LDG approach. 
Inspired by \cite{bonito2018,pryer}, the idea is to replace the Hessian $D^2\vy$ by a discrete Hessian $H_h[\vy_h]\in\left[L^2(\Omega)\right]^{3\times 2\times 2}$ to be defined now. To this end, let $l_1,l_2$ be non-negative integers (to be specified later) and consider two {\it local lifting operators} $r_e:[L^2(e)]^2\rightarrow[\V_h^{l_1}]^{2\times 2}$ and $b_e:L^2(e)\rightarrow[\V_h^{l_2}]^{2\times 2}$ defined for $e\in\Eh^a$ by
\begin{gather} 
  r_e(\vphi) \in [\V_h^{l_1}]^{2\times 2}: \,
  \int_{\omega_e}r_e(\vphi):\tau_h = \int_e\avrg{\tau_h}\vn_e\cdot\vphi \quad \forall \tau_h\in [\V_h^{l_1}]^{2\times 2}\label{def:lift_re},
  \\
  b_e(\phi) \in [\V_h^{l_2}]^{2\times 2}: \,
\int_{\omega_e} b_e(\phi):\tau_h = \int_e\avrg{\di \tau_h}\cdot\vn_e\phi \quad \forall \tau_h\in [\V_h^{l_2}]^{2\times 2} \label{def:lift_be}.
\end{gather}
It is clear that $\supp(r_e(\vphi))=\supp(b_e(\phi))=\omega_e$, where $\omega_e$ is the patch associated with $e$ (i.e., the union of two elements sharing $e$ for interior edges $e\in\Eh^0$ or just one single element for boundary edges $e\in\Eh^b$).
We extend $r_e$ and $b_e$ to $[L^2(e)]^{3\times 2}$ and $[L^2(e)]^3$, respectively, by component-wise applications.

The corresponding {\it global lifting operators} are then given by
\begin{equation}\label{E:global-lifting}
  R_h := \sum_{e\in\Eh^a} r_e : [L^2(\Gh^a)]^2 \rightarrow [\V_h^{l_1}]^{2\times 2},
  \quad
  B_h := \sum_{e\in\Eh^a} b_e : L^2(\Gh^a) \rightarrow [\V_h^{l_2}]^{2\times 2}.
\end{equation}
This construction is simpler than that in \cite{bonito2018} for quadrilaterals.
We now define the {\it discrete Hessian operator} $H_h:\V_h^k(\vvarphi,\Phi)\rightarrow\left[L^2(\Omega)\right]^{3\times 2\times 2}$ to be
\begin{equation} \label{def:discrHess}
H_h[\vv_h] := D_h^2 \vv_h -R_h(\jump{\nabla_h\vv_h})+B_h(\jump{\vv_h}).
\end{equation}
For a given polynomial degree $k\ge2$, a natural choice for the degree of the liftings is $l_1=l_2=k-2$ for triangular elements and $l_1=l_2=k$ for quadrilateral elements. However, any nonnegative values for $l_1$ and $l_2$ are suitable. We anticipate that in the numerical experiments presented in Section~\ref{sec:num_res}, we use $l_1=l_2=k$ with $k=2$.

We refer to \cite{BGNY2020} for properties of $H_h[\vy_h]$ but we point out one now to justify its use. Let $\Gamma_D \not = \emptyset$ and data $(\vvarphi,\Phi)$ be sufficiently smooth, and let $\{ \vy_h \}_{h>0} \subset  \V_h^k(\vvarphi,\Phi)$ satisfy
\begin{equation}\label{e:def_triple}
\|\vy_h\|_{H_h^2(\Omega)}^2:=\| D^2_h \vy_h \|_{L^2(\Omega)}^2 + \| \h^{-\frac{1}{2}} \jump{\nabla_h \vy_h }\|_{L^2(\Gh^a)}^2 + \| \h^{-\frac{3}{2}} \jump{\vy_h }\|_{L^2(\Gh^a)}^2 \leq \Lambda
\end{equation}
for a constant $\Lambda$ independent of $h$. If $\vy_h$ converges in $[L^2(\Omega)]^3$ to a function $\vy \in [H^2(\Omega)]^3$, then $H_h[\vy_h]$ converges weakly to $D^2 \vy$ in $[L^2(\Omega)]^{3\times 2 \times 2}$. We also refer to \cite{pryer,BGNY2020} for similar results for the Hessian and to \cite{ern2011} for the gradient operator.

\medskip\noindent  
{\bf Discrete energies.}
We are now ready to introduce the discrete energy on $\V_h^k(\vvarphi,\Phi)$
\begin{equation} \label{def:Eh}
  \begin{aligned}
E_h[\vy_h] := & \frac{\mu}{12} \int_{\Omega} \Big|\g^{-\frac{1}{2}} \, H_h[\vy_{h}] \, \g^{-\frac{1}{2}} \Big|^2 \\
& + \frac{\mu\lambda }{12(2\mu+\lambda)} \int_{\Omega} \Big|\tr\big(\g^{-\frac{1}{2}} \, H_h[\vy_h] \, \g^{-\frac{1}{2}}\big) \Big|^2  \\
	& +\frac{\gamma_1}{2}\|\h^{-\frac{1}{2}}\jump{\nabla_h\vy_h}\|_{L^2(\Gh^a)}^2+\frac{\gamma_0}{2}\|\h^{-\frac{3}{2}}\jump{\vy_h}\|_{L^2(\Gh^a)}^2 -\int_{\Omega}\vf\cdot\vy_h ,
  \end{aligned}
\end{equation}
where $\gamma_0,\gamma_1>0$ are stabilization parameters; recall the notation \eqref{E:higher-order} and \eqref{E:higher-order_B}.
One of the most attractive feature of the LDG method is that $\gamma_0,\gamma_1$ are not required to be sufficiently large as is typical for interior penalty methods \cite{bonito2018}. We refer to Section~\ref{sec:num_res} for numerical  investigations of this property
and to \cite{BGNY2020} for theory.

Note that the Euler-Lagrange equation $\delta E_h[\vy_h;\vv_h]=0$ in the direction $\vv_h$ reads
\begin{equation}\label{E:E-L}
a_h(\vy_h,\vv_h) = F(\vv_h) \quad\forall \, \vv_h\in\V_h^k(\mathbf{0},\mathbf{0}),
\end{equation}
where
\begin{equation} \label{def:form_ah}
\begin{split}
a_h(\vy_h,\vv_h) := & \frac{\mu}{6} \int_{\Omega} \left(\g^{-\frac{1}{2}}H_h[\vy_{h}] \g^{-\frac{1}{2}}\right):\left(\g^{-\frac{1}{2}}H_h[\vv_{h}] \g^{-\frac{1}{2}}\right)  \\
&  +\frac{\mu\lambda }{6(2\mu+\lambda)} \int_{\Omega}\tr\left(\g^{-\frac{1}{2}}H_h[\vy_{h}] \g^{-\frac{1}{2}}\right) \cdot \tr\left(\g^{-\frac{1}{2}}H_h[\vv_{h}] \g^{-\frac{1}{2}}\right) \\
& +\gamma_1 \big(\h^{-1}\jump{\nabla_h\vy_h},\jump{\nabla_h\vv_h} \big)_{L^2(\Gh^a)}+\gamma_0 \big(\h^{-3}\jump{\vy_h},\jump{\vv_h} \big)_{L^2(\Gh^a)},
\end{split}
\end{equation}
and
\begin{equation}  \label{def:form_Fh}
F(\vv_h) := \int_{\Omega}\vf\cdot\vv_h;
\end{equation}
compare with \eqref{equ:Gateaux} and \eqref{def:Diff_E_D2y}.

We reiterate that finding the strong form of  \eqref{def:Diff_E_D2y} is problematic because of the presence of the trace term. Yet, it is a key ingredient in the design of discontinuous Galerkin methods such as the interior penalty method and raises the question how to construct such methods for \eqref{def:Diff_E_D2y}.
The use of reconstructed Hessian in \eqref{def:form_ah} leads to a numerical scheme without resorting to the strong form of the equation.

\medskip\noindent  
{\bf Constraints.}
%
We now discuss how to impose the Dirichlet boundary conditions \eqref{eq:dirichlet} and the metric constraint \eqref{eqn:2Dprestrain} discretely. The former is enforced via the Nitsche approach and thus is not included as a constraint in the discrete admissible set as in \eqref{def:admiss}; this turns out to be advantageous for the analysis of the method \cite{bonito2018}. The latter is too strong to be imposed on a polynomial space. Inspired by \cite{bonito2018}, we define the {\it metric defect} as
\begin{equation} \label{def:PD}
D_h[\vy_h] := \sum_{\K\in\Th}\left|\int_{\K} \left(\nabla\vy_h^T\nabla\vy_h-g\right)\right|
\end{equation}
and, for a positive number $\eps$, we define the {\it discrete admissible set} to be
\begin{equation*}
\A_{h,\eps}^k:=\Big\{\vy_h\in \V^k_h(\vvarphi,\Phi): \quad D_h[\vy_h]\leq \eps\Big\}.
\end{equation*}
Therefore, the discrete minimization problem, discrete counterpart of \eqref{prob:min_Eg}, reads
\begin{equation} \label{prob:min_Eg_h}
\min_{\vy_h\in\A_{h,\eps}^k} E_h[\vy_h].
\end{equation}
Problem \eqref{prob:min_Eg_h} is nonconvex due to the structure of $\A_{h,\eps}^k$. Its solution is non-trivial and is discussed next.

\subsection{Discrete gradient flow}  \label{subsec:GF}
To find a local minimizer $\vy_h$ of $E_h[\vy_h]$ within $\A_{h,\eps}^k$, we design a discrete gradient flow associated with the discrete $H^2$-norm on $\V_h^k(\mathbf{0},0)$
\begin{equation}\label{def:H2metric}
  \begin{aligned}
(\vv_h,\mathbf{w}_h)_{H_h^2(\Omega)} := & \sigma (\vv_h,\mathbf{w}_h)_{L^2(\Omega)} + (D^2_h\vv_h,D^2_h\mathbf{w}_h)_{L^2(\Omega)} \\
& +(\h^{-1}\jump{\nabla_h\vv_h},\jump{\nabla_h\mathbf{w}_h})_{L^2(\Gh^a)}+(\h^{-3}\jump{\vv_h},\jump{\vw_h})_{L^2(\Gh^a)}, 
  \end{aligned}
\end{equation}
where $\sigma=0$ if $\Gamma_D \not = \emptyset$ and $\sigma>0$ if $\Gamma_D = \emptyset$. The latter corresponds to free boundary conditions and guarantees that $(\cdot,\cdot)_{H_h^2(\Omega)}$ is a scalar product \cite{bonito2020discontinuous,BGNY2020}.

Given an initial guess  $\vy_h^0\in\A_{h,\eps}^k$ and a pseudo-time step $\tau>0$, we  compute iteratively  $\vy_h^{n+1}:=\vy_h^{n}+\delta\vy_h^{n+1} \in \V_h^k(\vvarphi,\Phi)$ that minimizes the functional
\begin{equation}\label{gf:minimization}
  \vw_h \,\,\mapsto\,\,\frac{1}{2\tau}\|\vw_h-\vy_h^n\|_{H_h^2(\Omega)}^2+E_h[\vw_h]
  \quad\forall \, \vw_h\in\V_h^k(\vvarphi,\Phi),
\end{equation}
under the following \textit{linearized metric constraint} for the increment $\delta\vy_h^{n+1}$
\begin{equation}\label{gf:constraint}
L_T[\vy_h^n;\delta\vy_h^{n+1}] :=
\int_T(\nabla\delta\vy_h^{n+1})^T\nabla\vy_h^n+(\nabla\vy_h^n)^T\nabla\delta\vy_h^{n+1}=0 \quad \forall \K\in\Th.
\end{equation}
 
The proposed strategy is summarized in Algorithm \ref{algo:main_GF}.
\RestyleAlgo{boxruled}
\begin{algorithm}[htbp]
	\SetAlgoLined
	Given a target metric defect $\eps>0$, a pseudo-time step $\tau>0$ and a target tolerance $tol$\;
	Choose initial guess $\vy_h^0\in\A_{h,\eps}^k$\;
	\While{$\tau^{-1}|E_h[\vy_h^{n+1}]-E_h[\vy_h^{n}]|>$tol}{
		\textbf{Solve} \eqref{gf:minimization}-\eqref{gf:constraint} for $\delta\vy_h^{n+1}\in\V^k_h(\mathbf{0},\mathbf{0})$\;
		\textbf{Update} $\vy_h^{n+1} = \vy_h^{n}+\delta\vy_h^{n+1}$\;
	}
	\caption{(discrete-$H^2$ gradient flow) Finding local minima of $E_h$} \label{algo:main_GF}
\end{algorithm}

\noindent
We refer to Section~\ref{sec:solve} for a discussion on the implementation of Algorithm~\ref{algo:main_GF}.
We show in \cite{BGNY2020} that the discrete gradient flow satisfies the following properties:
\begin{itemize}
\item \textbf{Energy decay}: If $\delta\vy_h^{n+1}$ is nonzero, then we have 
\begin{equation}\label{eqn:control_energy}
E_h[\vy_h^{n+1}] < E_h[\vy_h^n].
\end{equation}
\item \textbf{Control of metric defect}: If $D_h[\vy_h^0]\le\eps_0$ and $E_h[\vy_h^0]<\infty$, then all the iterates $\vy_h^n$ satisfy $\vy_h^n \in \A^k_{h,\eps}$, i.e.,
\begin{equation} \label{eqn:control_defect}
D_h[\vy_h^n] \leq \eps := \eps_0 + \tau \Big(c_1 E_h[\vy_h^0] + c_2 \big(\|\vvarphi\|_{H^1(\Omega)}^2
  + \|\Phi\|_{H^1(\Omega)}^2 + \|\vf\|_{L^2(\Omega)}^2 \big)\Big), \!
\end{equation}
where $c_1,c_2$ depend on $\Omega$ if $\Gamma_D \not = \emptyset$ and also on $\sigma$ if $\Gamma_D = \emptyset$ but are independent of $n$, $h$ and $\tau$. Moreover, $c_2=0$ when $\Gamma_D = \emptyset$, as we assume $\vf=\mathbf{0}$ in the free boundary case.  
\end{itemize}
These two properties imply that the energy $E_h$ decreases at each step of Algorithm \ref{algo:main_GF} until a local extrema of $E_h$ restricted to $\A^k_{h,\eps}$ is attained.
  
\subsection{Initialization}  \label{subsec:preprocess}
The choice of an initial deformation $\vy^0_h$ is a very delicate matter. On the one hand, we need $\eps_0$ in \eqref{eqn:control_defect} as small as possible because the discrete gradient flow cannot improve upon the initial metric defect $D_h[\vy^0_h]\le\eps_0$. On the other hand, the only way to compensate for a large initial energy $E_h[\vy_h^0]$ is to take very small fictitious time steps $\tau$ that may entail many iterations of the gradient flow to reduce the energy. The value of $E_h[\vy_h^0]$ is especially affected by the mismatch between the Dirichlet boundary data $(\vvarphi,\Phi)$ and the trace of $\vy^0_h$ and $\nabla_h\vy^0_h$ that enter via the penalty terms in \eqref{def:form_ah} of LDG. Therefore, the role of the initialization process is to construct $\vy^0_h$ with $\eps_0$ relatively small and $E_h[\vy_h^0]$ of moderate size upon matching the boundary data $(\vvarphi,\Phi)$ as well as possible whenever $\Gamma_D \ne \emptyset$.

Notice that in some special cases, it is relatively easy to find such a $\vy^0_h$. For instance, when $g=\Id_2$ and $\Gamma_D \ne \emptyset$, this has been achieved in \cite{bonito2018} with a flat surface and a continuation technique.
For $g\ne\Id_2$ immersible, i.e., for which there exists a deformation $\vy\in[H^2(\Omega)]^3$ such that $\I[\vy]=g$, finding a good approximation $\vy_h^0$ of $\vy$ remains problematic and is the subject of this section.

\medskip\noindent
{\bf Metric preprocessing.}\label{sss:prestrain}
We recall that the stretching energy $E_s[\vy]$ of \eqref{E:stretching} must vanish for the asymptotic bending limit to make sense. We can monitor the deviation of $E_s[\vy]$ from zero to create a suitable $\vy_h^0$. Upon setting $\alpha^2=1$, we first observe that, since $g$ is uniformly positive definite, the first term in \eqref{E:stretching} satisfies
\[
\int_\Omega \big|g^{-\frac12} \, \I[\vy] \, g^{-\frac12} - \Id_2 \big|^2
\approx
\int_\Omega \big| \I[\vy] - g \big|^2 =
\int_\Omega \big| \nabla\vy^T \nabla\vy - g \big|^2;
\]
the same happens with the second term. We thus consider the discrete energy
\begin{equation} \label{def:E_prestrain_PP}
\widetilde{E}_h [\widetilde \vy_h] := \frac{1}{2}\int_{\Omega} \big|\nabla_h\widetilde \vy_h^T\nabla_h\widetilde \vy_h-g \big|^2
\end{equation}
and propose a discrete $H^2$-gradient flow to reduce it similar to that in Section \ref{subsec:GF}. We proceed recursively: given $\widetilde \vy^n_h$ we compute $\widetilde \vy^{n+1}_h:=\widetilde \vy^n_h+\delta \widetilde\vy_h^{n+1}$ by seeking the increment $\delta \widetilde\vy_h^{n+1}\in\V^k_h(\mathbf{0},\mathbf{0})$ that satisfies for all $\vv_h\in\V^k_h(\mathbf{0},\mathbf{0})$
\begin{equation}\label{pre-gf:variation2}
\widetilde{\tau}^{-1}(\delta\widetilde \vy_h^{n+1},\vv_h)_{H_h^2(\Omega)}+s_h(\widetilde \vy_h^n;\delta\widetilde \vy_h^{n+1},\vv_h)=
-s_h(\widetilde \vy_h^n; \widetilde \vy^n_h,\vv_h),
\end{equation}
where $\widetilde{\tau}$ is a pseudo time-step parameter, not necessarily the same as $\tau$ in Algorithm~\ref{algo:main_GF}, and $s_h(\widetilde \vy_h^n;\cdot,\cdot)$ is the variational derivative of $\widetilde{E}_h$ linealized at $\widetilde \vy_h^n$
\begin{equation}\label{pre-gf:bilinear}
s_h(\widetilde \vy_h^n; \vw_h,\vv_h):=\int_{\Omega}\Big(\nabla_h\vv_h^T\nabla_h\vw_h+\nabla_h\vw_h^T\nabla_h\vv_h\Big):\Big((\nabla_h\widetilde \vy_h^n)^T\nabla_h\widetilde \vy_h^n-g \Big).
\end{equation}
This flow admits a unique solution at each step because the left-hand side of \eqref{pre-gf:variation2} is coercive, namely
\begin{equation}\label{E:coercive}
  \|\vv_h\|_{H_h^2(\Omega)}^2 \lesssim \widetilde{\tau}^{-1} (\vv_h,\vv_h)_{H_h^2(\Omega)}
  + s_h(\widetilde \vy_h^n; \vv_h,\vv_h)
  \quad\forall\, \vv_h\in\V^k_h(\mathbf{0},\mathbf{0}).
\end{equation}
Moreover, this flow stops whenever either of the following two conditions is met: 
\begin{itemize}
	\item the prestrain defect $D_h$ reaches a prescribed value $\tilde \eps_0$, i.e, $D_h[\widetilde \vy_h^{n+1}]\le \widetilde \eps_0$;
	\item the energy $\widetilde E_h$ becomes stationary, i.e., $\widetilde \tau^{-1}|\tilde E_h[\widetilde \vy^{n+1}]-\widetilde E_h[\widetilde \vy^{n}] |\leq \widetilde{tol}$.
\end{itemize}
Monotone decay of $\widetilde{E}_h$ in \eqref{def:E_prestrain_PP} is not guaranteed by the flow because of the evaluation of $\delta\widetilde{E}_h$ at $\widetilde\vy_h^n$.
However, in all the numerical experiments proposed in Section~ \ref{sec:num_res}, the latter property is observed for $\widetilde \tau$ sufficiently small.
Upon choosing suitable parameters $\widetilde \eps_0$ and $\widetilde{\tau}$, this procedure produces initial configurations $\vy_h^0$ with small metric defect $D_h[\vy_h^0]$, but it has one important drawback: {\it flat configurations are local minimizers of \eqref{def:E_prestrain_PP} irrespective of $g$}. To see this, suppose that the current iterate $\widetilde{\vy}_h^n$ of \eqref{pre-gf:variation2} is flat, i.e., $\widetilde{\vy}_h^n=(y_1,y_2,0)$, and let $\delta\widetilde{\vy}_h^{n+1}=(d_1,d_2,d_3)\in\V^k_h(\mathbf{0},\mathbf{0})$, where the functions $y_i$ and $d_i$ depend on $(x_1,x_2)\in\Omega$. Take now $\vv_h=(0,0,\phi)\in \V^k_h(\mathbf{0},\mathbf{0})$ and note that
\begin{equation*}
  \nabla\vv_h^T \nabla\widetilde{\vy}_h^n =
  \begin{bmatrix}
   0 & 0 & \partial_1\phi \\ 0 & 0 & \partial_2\phi
  \end{bmatrix}
  \begin{bmatrix}
   \partial_1 y_1 & \partial_2 y_1 \\ \partial_1 y_2 & \partial_2 y_2 \\ 0 & 0
  \end{bmatrix}
  = \mathbf{0} = (\nabla\widetilde{\vy}_h^n)^T \nabla\vv_h,
\end{equation*}
whence the right-hand side of \eqref{pre-gf:variation2} vanishes. Since
$(\delta\widetilde{\vy}_h^{n+1},\vv_h)_{H_h^2(\Omega)} = (d_3,\phi)_{H_h^2(\Omega)}$,
taking $\phi=d_3$ and utilizing \eqref{E:coercive} we deduce $d_3=0$
because, as already pointed out, $\|\cdot\|_{H_h^2(\Omega)}:= (\cdot,\cdot)^{1/2}_{H_h^2(\Omega)}$ defines a norm on $\V^k_h(\mathbf{0},\mathbf{0})$.  
This shows that the next iterate
$\widetilde{\vy}_h^{n+1}$ of  \eqref{gf:minimization} is also flat and we need another mechanism to deform a flat surface out of plane provided $g$ does not admit a flat immersion. We discuss this next.

A second drawback of \eqref{gf:minimization} is that the stretching energy $\widetilde{E}_h$ is just first order and cannot accommodate the Dirichlet boundary condition $\nabla\vy=\Phi$ on $\Gamma_D$. We again need an additional preprocessing of the boundary conditions which we present next.

\medskip\noindent
{\bf Boundary conditions preprocessing.}\label{sss:bc_d}
We pretend that $g=\Id_2$ momentarily, and rely on \eqref{E:g=1} and \eqref{E:bilaplacian} to consider the bi-Laplacian problem provided $\Gamma_D\ne\emptyset$
\begin{equation}\label{bi-Laplacian}
  \Delta^2\widehat \vy = \widehat \vf\quad \mbox{in } \Omega, \quad
  \widehat \vy = \vvarphi \quad \mbox{on } \Gamma_D, \quad
  \nabla\widehat \vy = \Phi \quad \mbox{on } \Gamma_D,
\end{equation}  
where typically $\widehat\vf=\mathbf{0}$.
This vector-valued problem is well-posed and gives, in general, a non-flat surface $\widehat{\vy}(\Omega)$. We use the LDG method with boundary conditions imposed \emph{\`a la Nitsche} to approximate the solution $\widehat\vy\in\V(\vvarphi,\Phi)$ of \eqref{bi-Laplacian}:
\begin{equation}\label{bi-Laplacian-system}
\widehat \vy_h\in\V^k_h(\vvarphi,\Phi): \quad
c_h(\widehat \vy_h,\vv_h) = (\widehat\vf,\vv_h)_{L^2(\Omega)} \quad\forall\,\vv_h\in\V^k_h(\mathbf{0},\mathbf{0}).
\end{equation}
Here, $c_h(\widehat \vy_h,\vv_h)$ is defined similarly to \eqref{def:form_ah} using the discrete Hessian \eqref{def:discrHess}, i.e.,
\begin{equation}\label{bi-Laplacian_bilinear}
\begin{split}
c_h(\vw_h,\vv_h) := & \int_{\Omega}H_h[\vw_h] : H_h[\vv_h]  \\
&+\widehat\gamma_1(\h^{-1}\jump{\nabla_h\vw_h},\jump{\nabla_h\vv_h})_{L^2(\Gh^a)}
+\widehat\gamma_0(\h^{-3}\jump{\vw_h},\jump{\vv_h})_{L^2(\Gh^a)},
\end{split}
\end{equation}
where $\widehat{\gamma}_0$ and $\widehat{\gamma}_1$ are positive penalty parameters that may not necessarily be the same as their counterparts $\gamma_0$ and $\gamma_1$ used in the definition of $E_h$. Then $\widehat\vy_h$ satisfies (approximately) the given boundary conditions on $\Gamma_D$ and $\widehat\vy_h(\Omega)$ is, in general, non-flat.

Instead, if $\Gamma_D=\emptyset$ (free boundary condition), then an obvious choice is $\widehat{\vy}=(id,0)^T$, where $id(x)=x$ for $x \in \Omega$, but the surface $\widehat{\vy}(\Omega)=\Omega\times0$ is flat. To get a surface out of plane, we consider a somewhat ad-hoc procedure: we solve \eqref{bi-Laplacian} with a fictitious forcing $\widehat\vf\ne\mathbf{0}$ supplemented with the Dirichlet boundary condition $\vvarphi(x)=(x,0)^T$ for $x \in \partial\Omega$ but obviating $\Phi$ and jumps of $\nabla_h\widehat{\vy}_h$ on $\Gh^b$ in \eqref{bi-Laplacian_bilinear}. This corresponds to enforcing discretely a variational (Neumann) boundary condition $\Delta\widehat{\vy} = 0$ on $\partial\Omega$.

We summarize the previous discussion of preprocessing in Algorithm \ref{algo:preprocess},
which
\RestyleAlgo{boxruled}
\begin{algorithm}[htbp]
\SetAlgoLined
	Given $\widetilde{tol}$ and $\widetilde\eps_0$\;
	\eIf{$\Gamma_D\ne\emptyset$ (Dirichlet boundary condition)}{
		\textbf{Solve} \eqref{bi-Laplacian-system} for $\widehat \vy_h\in\V_h^k(\vvarphi,\Phi)$ with $\widehat\vf=\mathbf{0}$\;
		}{\textbf{Solve} \eqref{bi-Laplacian-system} for $\widehat \vy_h$ with $\widehat\vf\ne\mathbf{0}$, $\vvarphi=(id,0)$ and without $\Phi$\;}{
		Set $\widetilde{\vy}_h^0=\widehat \vy_h$\;
		}
 \While{$\widetilde \tau^{-1}\big|E_h[\widetilde{\vy}_h^{n+1}]-E_h[\widetilde{\vy}_h^{n}]\big|> \widetilde{tol}$ \emph{ and } $D_h[\widetilde{\vy}_h^{n+1}] > \widetilde\eps_0$}{
  \textbf{Solve} \eqref{pre-gf:variation2}  for $\delta\widetilde{\vy}_h^{n+1}\in\V^k_h(\mathbf{0},\mathbf{0})$\;
  \textbf{Update} $\widetilde{\vy}_h^{n+1} = \widetilde{\vy}_h^{n}+\delta\widetilde{\vy}_h^{n+1}$ \;
  }
 \textbf{Set} $\vy_h^0=\widetilde{\vy}_h^{n+1}$.
 \caption{Initialization step for Algorithm \ref{algo:main_GF}.} \label{algo:preprocess}
\end{algorithm}
consists of two separate steps: the \textit{boundary conditions} and \textit{metric} preprocessing steps. 
When $\Gamma_D \not = \emptyset$ (Dirichlet boundary condition), the former constructs a solution $\widehat\vy_h$ to \eqref{bi-Laplacian-system} with $\widehat\vf=\mathbf{0}$, whence $\widehat\vy_h \approx\vvarphi$ and $\nabla_h\widehat\vy_h \approx \Phi$ on $\Gamma_D$. Instead, when $\Gamma_D = \emptyset$ (free boundary condition), $\widehat\vy_h$ solves \eqref{bi-Laplacian-system} again but now with  $\widehat\vf\ne\mathbf{0}$ and a suitable boundary condition for $\vy_h$ on $\partial\Omega$ that guarantee $\widehat\vy_h(\Omega)$ is non-flat. The output of this step is then used as an initial guess for the metric preprocessing step \eqref{pre-gf:variation2}.

It is conceivable that more efficient or physically motivated algorithms could be designed to construct initial guesses. We leave these considerations for future research. As we shall see in Section~\ref{sec:num_res}, different initial deformations can lead to different equilibrium configurations corresponding to distinct local minima of the energy $E_h$ in \eqref{def:Eh}. These minima are generally physically meaningful.

\section{Implementation}  \label{sec:solve}

We make a few comments on the implementation of the gradient flow \eqref{gf:minimization}-\eqref{gf:constraint}, built in Algorithm \ref{algo:main_GF}, and the resulting linear algebra solver used at each step. 

\subsection{Linear constraints}
We start by discussing how the linearized metric constraint \eqref{gf:constraint} is enforced using piecewise constant Lagrange multipliers in the space
\begin{equation*}
\Lambda_h:=\left\{\lambda_h:\Omega\to\mathbb{R}^{2\times2}: \,\, \lambda_h^T=\lambda_h, \,\, \lambda_h\in\big[\V_h^0\big]^{2\times 2}\right\}.
\end{equation*}
We define the bilinear form $b_h^n$ for any $(\vv_h,\vmu_h)\in\V^k_h(\mathbf{0},\mathbf{0})\times\Lambda_h$ to be
\begin{equation}\label{def:bh}
b_h^n(\vv_h,\vmu_h):=\sum_T\int_T(\nabla \vv_h^T\nabla\vy_h^n+(\nabla\vy_h^n)^T\nabla\vv_h):\vmu_h.
\end{equation}
We observe that $b_h^n$ depends on $\vy_h^n$ and that $b_h^n(\delta\vy_h^{n+1},\vmu_h)=0$ for all $\vmu_h\in\Lambda_h$ implies \eqref{gf:constraint}, i.e., $L_T[\vy_h^n;\delta\vy_h^{n+1}]=0$ for all $T\in\Th$. Therefore, recalling the forms $a_h$ and $F_h$ in \eqref{def:form_ah} and \eqref{def:form_Fh}, the augmented system for the Euler-Lagrange equation \eqref{E:E-L} incorporating the gradient flow step and the linearized metric constraint reads: seek $(\delta\vy_h^{n+1},\vla_h^{n+1})\in\V^k_h(\mathbf{0},\mathbf{0})\times \Lambda_h$ such that
\begin{equation}\label{gf:system}
\begin{aligned}
  \tau^{-1}(\delta\vy_h^{n+1},\vv_h)_{H_h^2(\Omega)} \!+\! a_h(\delta\vy_h^{n+1},\vv_h)
  \!+\! b_h^n(\vv_h,\vla_h^{n+1})& \!=\! F_h(\vv_h) \!-\! a_h(\vy_h^n,\vv_h)  \\
b_h^n(\delta\vy_h^{n+1},\vmu_h) & \!=\! 0
\end{aligned}
\end{equation}
for all $(\vv_h,\vmu_h)\in\V^k_h(\mathbf{0},\mathbf{0})\times \Lambda_h$. 
Since $\vy_h^n\in\V_h^k(\vvarphi,\Phi)$, whence $\vy_h^{n+1}=\vy_h^n+\delta\vy_h^{n+1}\in\V_h^k(\vvarphi,\Phi)$, the effect of the Dirichlet boundary data $(\vvarphi,\Phi)$ is implicitly contained in $a_h(\vy_h^n,\vv_h)$ when $\Gamma_D$ is not empty.
   
\subsection{Solvers}
Let $\{\vvarphi_h^i\}_{i=1}^N$ be a basis for $\V^k_h(\mathbf{0},\mathbf{0})$ and let $\{\vpsi_h^i\}_{i=1}^M$ be a basis for $\Lambda_h$. The discrete problem \eqref{gf:system} is a {\it saddle-point problem} of the form
\begin{equation}\label{discrete_system}
\begin{bmatrix}
A       & B_n^T \\
B_n       & 0
\end{bmatrix}
\begin{bmatrix}
\vdY_h^{n+1} \\
\vLambda_h^{n+1}
\end{bmatrix}
=
\begin{bmatrix}
\vF_n  \\
\mathbf{0}
\end{bmatrix}.
\end{equation}
Here, $(\vdY_h^{n+1},\vLambda_h^{n+1})$ are the nodal values of $(\delta\vy_h^{n+1},\vla_h^{n+1})$ in these bases, $A=(A_{ij})_{i,j=1}^N\in\mathbb{R}^{N\times N}$ is the matrix corresponding to the first two terms of \eqref{gf:system}
\begin{equation*}
  A_{ij}:=\tau^{-1}(\vvarphi_h^j,\vvarphi_h^i)_{H_h^2(\Omega)} + \widetilde A_{ij} \quad \mbox{with} \quad \widetilde A_{ij}:=a_h(\vvarphi_h^j,\vvarphi_h^i), \quad i,j=1,\ldots,N,
\end{equation*}
while the matrix $B_n\in\mathbb{R}^{M\times N}$ corresponds to the bilinear form $b_h^n$ and is given by
\begin{equation*}
(B_n)_{ij}:=b_h^n(\vvarphi_h^j,\vpsi_h^i) \quad i=1,\ldots,M, \, j=1,\ldots,N.
\end{equation*}
The vector $\vF_n\in\mathbb{R}^N$ accounts for the right-hand-side of \eqref{gf:system}. It reads $\vF_n=\vF+\vL-\widetilde A \vY^n$, where $\vY^n$ contains the nodal values of $\vy_h^n$ in the basis $\{\vvarphi_h^i\}_{i=1}^N$ while $\vF=(F_i)_{i=1}^N$ and $\vL=(L_i)_{i=1}^N$ are defined by
\begin{equation*}
F_i:= F_h(\vvarphi_h^i) \quad \mbox{and} \quad L_i:= -a_h(\bar \vo,\vvarphi_h^i), \quad i=1,\ldots,N.
\end{equation*}
Here, $\bar \vo$ denotes the zero function in the space $\V_h(\vvarphi,\Phi)$ and $\vL$ contains the liftings of the boundary data. Since $B_n$ and $\vF_n$ depend explicitly on the current deformation $\vy_h^n$, they have to be re-computed at each iteration of Algorithm \ref{algo:main_GF} (gradient flow). In contrast, the matrices $A$ and $\widetilde A$ and the vector $\vL$, which are the most costly to assemble because of the reconstructed Hessians, are independent of the iteration number $n$ and can thus be computed once for all.

More precisely, to compute the element-wise contribution on a cell $T$, the discrete Hessian \eqref{def:discrHess} of each basis function associated with $T$ along with those associated with the neighboring cells is computed. Recall that for any interior edge $e\in\Eh^i$, the support of the liftings $r_e$ and $b_e$  in \eqref{def:lift_re} and \eqref{def:lift_be} is the union of the two cells sharing $e$ as an edge. We employ direct solvers for these small systems. We proceed similarly for the computation of the liftings of the boundary data $\vvarphi$ and $\Phi$. Once the discrete Hessians are computed, the rest of the assembly process is standard. Incidentally, we note that the proposed LDG approach couples the degree of freedom (DoFs) of all neighboring cells (not only the cell with its neighbors). As a consequence, the sparsity pattern of LDG is slightly larger than it for a standard symmetric interior penalty dG (SIPG) method. However, the stability properties of LDG are superior to those of SIPG \cite{BGNY2020}.

System \eqref{discrete_system} can be solved using the \textit{Schur complement method}. Denoting $S_n:=B_nA^{-1}B_n^T$ the Schur complement matrix, the first step determines $\vLambda_h^{n+1}$ satisfying
\begin{equation} \label{eqn:Schur}
S_n\vLambda_h^{n+1}=B_nA^{-1}\vF_n,
\end{equation}
followed by the computation of $\delta \vY_h^{n+1}$ solving
\begin{equation}\label{E:deltaY}
A\delta \vY_h^{n+1}=\vF_n-B_n^T\vLambda_h^{n+1}. 
\end{equation}
Because the matrix $A$ is independent of the iterations, we pre-compute its LU decomposition once for all and use it whenever the action of $A^{-1}$ is needed in \eqref{eqn:Schur} and \eqref{E:deltaY}. 
Furthermore, a conjugate gradient algorithm is utilized to compute $\vLambda_h^{n+1}$ in \eqref{eqn:Schur} to avoid assembling $S_n$. The efficiency of the latter depends on the condition number of the matrix $S_n$, which in turn depends on the inf-sup constant of the saddle-point problem \eqref{discrete_system}. Leaving aside the preprocessing step, we observe in practice that solving the Schur complement problem \eqref{eqn:Schur} is the most time consuming part of the simulation. Finally, we point out that the stabilization parameters $\gamma_0$ and $\gamma_1$ influence the number of Schur complement iterations: more iterations of the gradient conjugate algorithm are required for larger stabilization parameter values. We refer to Tables \ref{tab:iso_vertical_load_0025} and \ref{tab:iso_vertical_load_0025_LDG_SIPG} below for more details.

\section{Numerical experiments}  \label{sec:num_res}

In this section, we present a collection of numerical experiments to illustrate the performance of the proposed methodology. We consider several prestrain tensors $g$, as well as both $\Gamma_D\neq\emptyset$ (Dirichlet boundary condition) and $\Gamma_D=\emptyset$ (free boundary condition). The Algorithms \ref{algo:main_GF} and \ref{algo:preprocess} are implemented using the \textrm{deal.ii} library \cite{bangerth2007} and the visualization is performed with \textrm{paraview} \cite{ayachit2015}. The color code is the following: (multicolor figures) dark blue indicates the lowest value of the deformation's third component while dark red indicate the largest value of the deformation's third component; (unicolor figures) magnitude of the deformation's third component.

For all the simulations, we fix the polynomial degree $k$ of the deformation $\vy_h$ and  $l_1,l_2$ for the two liftings of the discrete Hessian $H_h[\vy_h]$ to be
\[
k=l_1=l_2=2.
\]
Moreover, unless otherwise specified, we set the Lam\'e coefficients to $\lambda=8$ and $\mu=6$, and the stabilization parameters for \eqref{def:form_ah} and \eqref{bi-Laplacian_bilinear} to be
\[
\gamma_0=\gamma_1=1,
\qquad
\widehat \gamma_0=\widehat \gamma_1=1.
\]
In striking contrast to \cite{bonito2018,bonito2020discontinuous}, these parameters do not need to be large for stability purposes. When $\Gamma_D=\emptyset$, we set $\epsilon=1$ in \eqref{def:H2metric}. Finally, we choose $tol=10^{-6}$ for the stopping criteria in Algorithm \ref{algo:main_GF} (gradient flow).

To record the energy $E_h$ and metric defect $D_h$ after the three key procedures described in Section \ref{sec:method}, we resort to the following notation:
{\it BC PP} (boundary conditions preprocessing);
{\it Metric PP} (metric preprocessing);
{\it Final}  (gradient flow).
 
\subsection{Vertical load and isometry constraint}\label{S:vertical-load}
This first example has been already investigated in \cite{bartels2013,bonito2018}.
We consider the square domain $\Omega=(0,4)^2$, the metric $g=\Id_2$ (isometry) and a vertical load $\vf=(0,0,0.025)^T$. Moreover, the plate is clamped on $\Gamma_D=\{0\}\times[0,4]\cup[0,4]\times\{0\}$, i.e., we prescribe the Dirichlet boundary condition \eqref{eq:dirichlet} with
\begin{equation*}
\vvarphi(x_1,x_2)=(x_1,x_2,0)^T, \quad \Phi=[I_2,\mathbf{0}]^T \qquad (x_1,x_2) \in \Gamma_D.
\end{equation*}
Finally, we set the Lam\'e constant $\lambda=0$ thereby removing the trace term in \eqref{def:Eh}.

No preprocessing step is required because the flat plate, which corresponds to the identity deformation $\vy_h^0(\Omega)=\Omega$, satisfies the metric constraint and the boundary conditions. For the discretization of $\Omega$, we use $\ell=0,1,2,\cdots$ to denote the refinement level and consider uniform partitions $\mathcal T_\ell$ consisting of squares $\K$ of side-length $4/2^\ell$ and diameters $h_{\K}=h=\sqrt{2}/2^{\ell-2}$. The pseudo-time step used for the discretization of the gradient flow is chosen so that $\tau=h$. 
The discrete energy $E_h[\vy_h]$ and metric defect $D_h[\vy_h]$ for $\ell=3,4,5$ are report in Table \ref{tab:iso_vertical_load_0025} along with the number of gradient flow iterations (GF Iter) required to reach the targeted stationary tolerance and the range of number of iterations (Schur Iter) needed to solve the Schur complement problem \eqref{eqn:Schur}. Note that in this case we have $D_h[\vy_h^0]=0$, namely $\vy_h^0\in\mathbb{A}_{h,\eps_0}^k$ with $\eps_0=0$.

\begin{table}[htbp]
\begin{center}
\begin{tabular}{|c|c|c|c|c|c|c|}
\hline
Nb. cells & DoFs & $\tau=h$ & $E_h$ & $D_h$ & GF Iter &  Schur Iter \\
\hline
64 & 1920 & $\sqrt{2}/2$ & -1.002E-2 & 1.062E-2 & 11 & [60,65] \\ \hline
256 & 7680 & $\sqrt{2}/4$ & -9.709E-3 & 5.967E-3 & 17 & [85,101] \\ \hline
1024 & 30720 & $\sqrt{2}/8$ & -8.762E-3 & 2.962E-3 & 28 & [118,148] \\
\hline
\end{tabular}
\vspace{0.3cm}
\caption{Effect of the numerical parameters $h$ and $\tau=h$ on the energy and prestrain defect for the vertical load example using $\gamma_0=\gamma_1=1$.
As expected \cite{bartels2013,bonito2015,bonito2018}, we observe that $D_h[\vy_h]$ is $\mathcal{O}(h)$. The number of iterations needed by the gradient flow and for each Schur complement solver increases with the resolution. 
} \label{tab:iso_vertical_load_0025}
\end{center}
\end{table}

We point out that the SIPG method analyzed in \cite{bonito2018} requires $\gamma_0=5000$ and $\gamma_1=1100$ in this example. We report in Table \ref{tab:iso_vertical_load_0025_LDG_SIPG} the performance of both methods with this choice of stabilization parameters but using the definition of the mesh function \eqref{eqn:mesh_function} rather than $\h(\vx)=\max_{\K\in\mathcal T}h_T$ as in \cite{bonito2018}.

\begin{table}[htbp]
\begin{center}
\begin{tabular}{|c|c|c|c|c|c|c|c|c|}
\cline{2-9}
\multicolumn{1}{c|}{ } & \multicolumn{4}{c|}{LDG} & \multicolumn{4}{c|}{SIPG} \\
\hline
$\tau=h$ & $E_h$ & $D_h$ & GF Iter &  Schur Iter & $E_h$ & $D_h$ & GF Iter & Schur Iter \\
\hline		
$\sqrt{2}/2$ & -8.28E-3 & 7.71E-3 & 7 & [302,321] & -8.30E-3 & 7.72E-3 & 7 & [284,307] \\
\hline
$\sqrt{2}/4$ & -6.63E-3 & 3.45E-3 & 14 & [557,605] & -6.64E-3 & 3.46E-3 & 13 & [556,600] \\
\hline
$\sqrt{2}/8$ & -4.88E-3 & 1.34E-3 & 37 & [788,831] & -4.90E-3 & 1.34E-3 & 35 & [787,833] \\
\hline
\end{tabular}
\vspace{0.3cm}
\caption{Comparison of the LDG and SIPG methods using the penalization parameters $\gamma_0=5000$, $\gamma_1=1100$ required by the SIPG. The results are similar.} \label{tab:iso_vertical_load_0025_LDG_SIPG}
\end{center}	
\end{table}

Based on Table \ref{tab:iso_vertical_load_0025_LDG_SIPG}, we see that the two methods give similar results. The advantage of the LDG approach is that there is no constraint on the stabilization parameters $\gamma_0$ and $\gamma_1$ other than being positive. In contrast, the coercivity of the energy discretized with the SIPG method requires {$\gamma_0$ and $\gamma_1$ to be} sufficiently large (depending on the maximum number of edges of the elements in the subdivision $\mathcal T$ and the constant in the trace inequality) \cite{bonito2018}. For instance, the choice $\gamma_0=\gamma_1=1$ for the SIPG method yields an unstable scheme and the problem \eqref{gf:system} becomes singular
after a few iterations of the gradient flow. Moreover, the large values of $\gamma_0,\gamma_1$ are mainly dictated by the penalty of the boundary terms in $E_h[\vy_h^0]$ and the need to produce moderate values of $E_h[\vy_h^0]$ to prevent very small time steps $\tau$ in \eqref{eqn:control_defect}. Furthermore, within each gradient flow iteration, the solution of the Schur complement problem \eqref{eqn:Schur} using the LDG approach with $\gamma_0=\gamma_1=1$ requires less than a fifth of the iterations (Schur Iter) for SIPG with $\gamma_0=5000$ and $\gamma_1=1100$ at the expense of slightly larger number of iterations of the gradient flow (GF Iter); compare Tables \ref{tab:iso_vertical_load_0025} and \ref{tab:iso_vertical_load_0025_LDG_SIPG}. This documents a superior performance of LDG relative to SIPG.

Note that there is an artificial displacement along the diagonal $x_1+x_2=4$ \cite{bonito2018,bartels2013} for this example, which does not correspond to the actual physics of the problem, namely $y = 0$ for $x_1+x_2\le4$. 
The artificial displacements obtained by the two methods for various meshes are compared in Figure \ref{fig:deformation_accross_diagonal} and Table \ref{T:defection}.

\begin{figure}[htbp]
	\begin{center}
		\includegraphics[width=6.0cm]{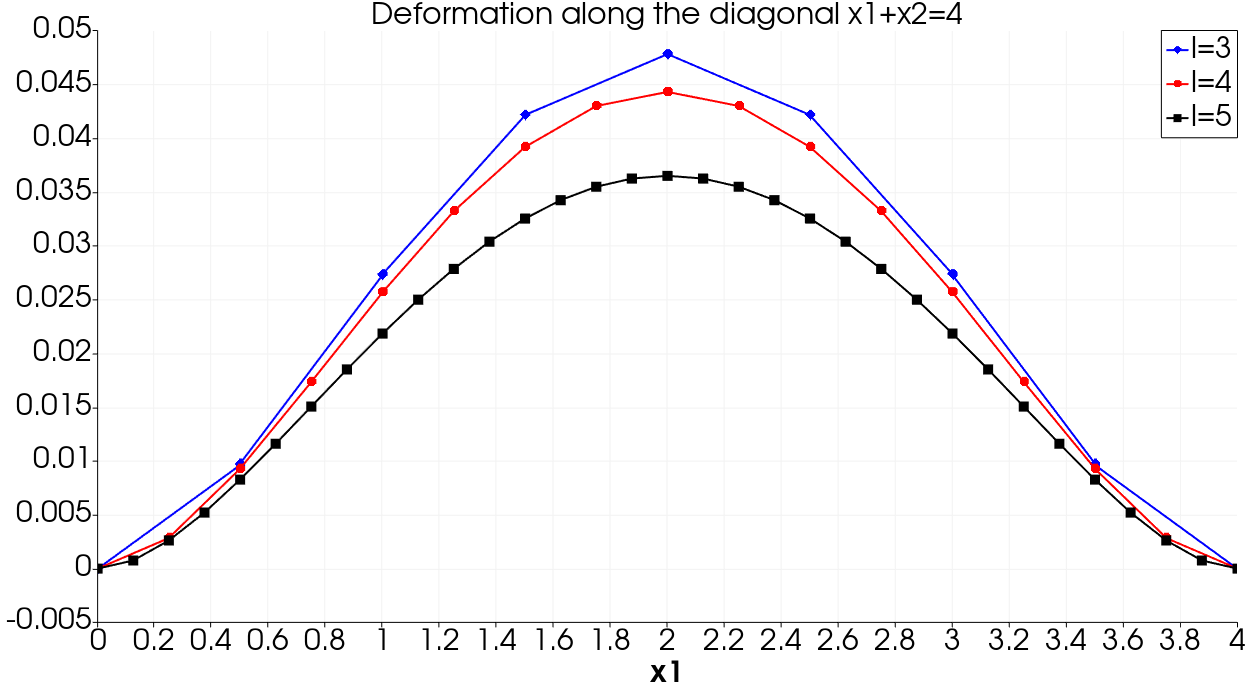} \\
		\includegraphics[width=6.0cm]{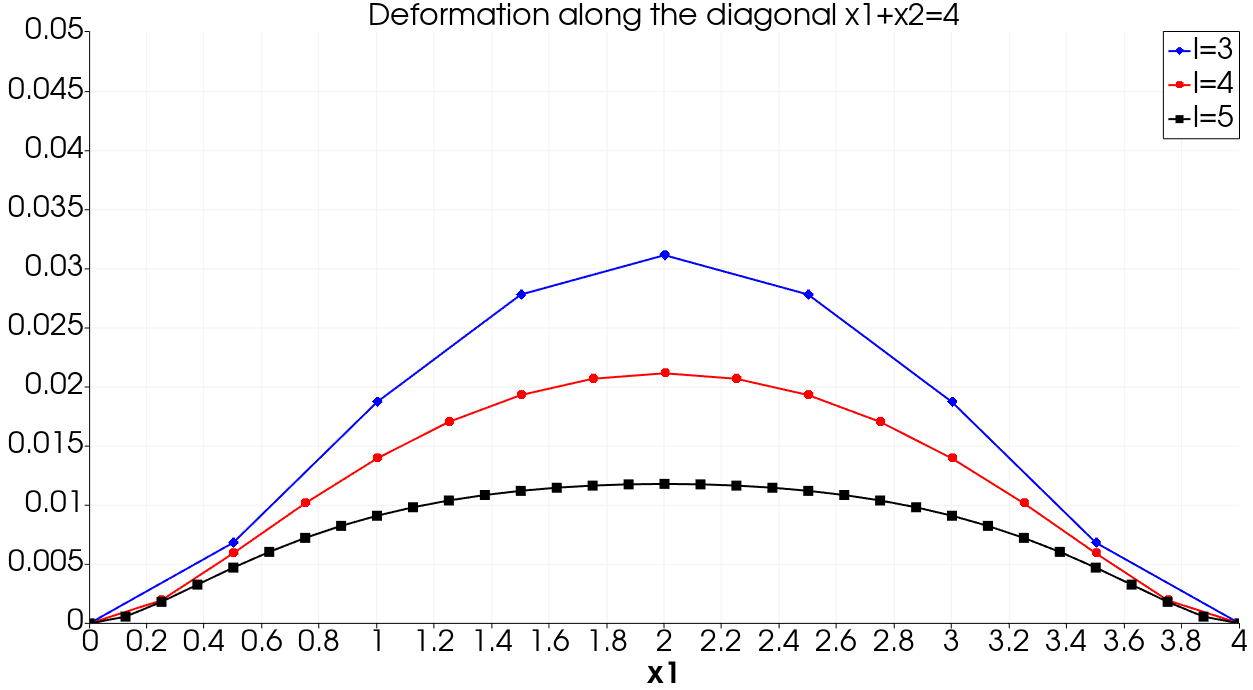}
		\includegraphics[width=6.0cm]{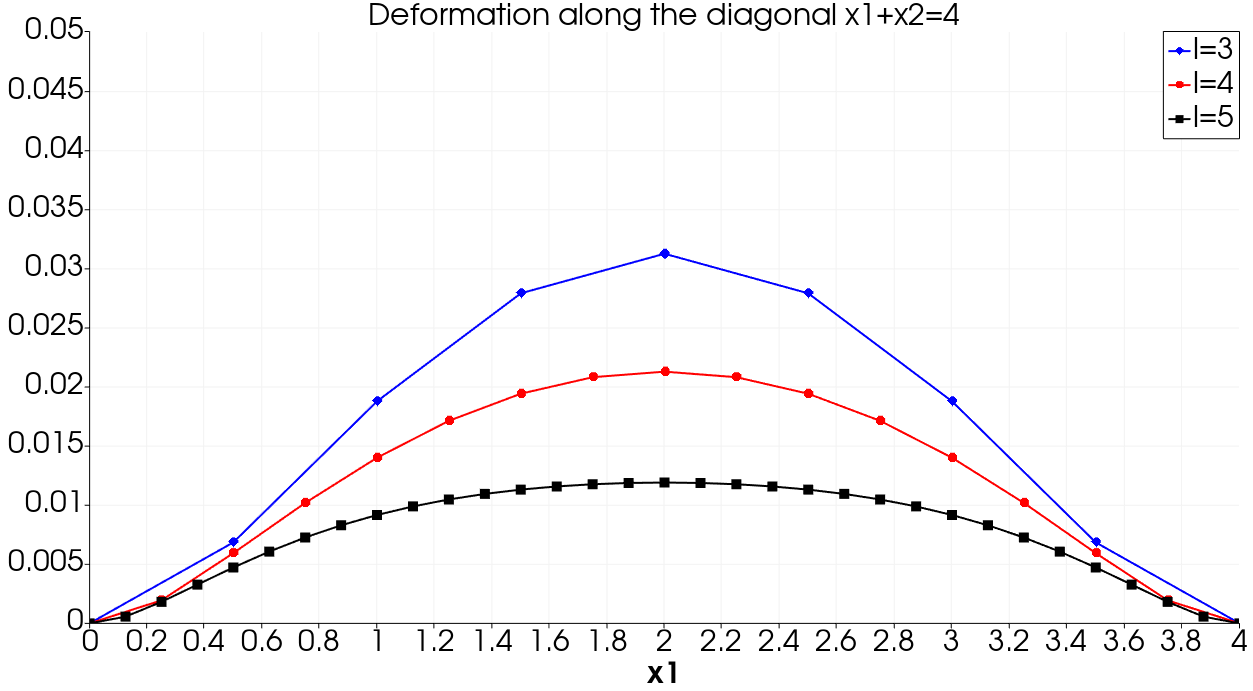}
		\caption{Deformation along the diagonal $x_1+x_2=4$. Top: LDG with $\gamma_0=\gamma_1=1$; bottom-left: LDG with $\gamma_0=5000$ and $\gamma_1=1100$; bottom-right: SIPG with $\gamma_0=5000$ and $\gamma_1=1100$. The deflection is slightly larger when $\gamma_0=\gamma_1=1$ while both methods yield similar results when $\gamma_0=5000$ and $\gamma_1=1100$; see Table \ref{T:defection}.}\label{fig:deformation_accross_diagonal}
	\end{center}
\end{figure}
\begin{table}[htbp]
\begin{center}
	\begin{tabular}{|l|c|c|c|}
		\cline{2-4}
		\multicolumn{1}{c|}{ } & \multicolumn{2}{c|}{LDG} & \multicolumn{1}{c|}{SIPG} \\
		\hline
		$\sharp$ ref. & $\qquad\gamma_0=\gamma_1=1\qquad$ & $\gamma_0=5000,\gamma_1=1100$ & $\gamma_0=5000,\gamma_1=1100$ \\
		\hline
		$l=3$ & 0.0478 & 0.0311 & 0.0312 \\
		\hline
		$l=4$ & 0.0443 & 0.0211 & 0.0213 \\
		\hline
		$l=5$ & 0.0365 & 0.0118 & 0.0119 \\
		\hline
	\end{tabular}
\vspace{0.3cm}
\caption{Deflection $y_3$ along the diagonal $x_1+x_2=4$ for both LDG and SIPG
}\label{T:defection}
\end{center}
\end{table}

\subsection{Rectangle with \emph{cylindrical} metric}
The domain is the rectangle $\Omega=(-2,2)\times(-1,1)$ and the Dirichlet boundary is $\Gamma_D=\{-2\}\times(-1,1)\cup\{2\}\times(-1,1)$. The mesh $\Th$ is uniform and made of 1024 rectangular cells of diameter $h_{\K}=h=\sqrt{5}/4$ (30720 DoFs) and the pseudo time-step is fixed to $\tau = 0.1$.

\subsubsection{\bf \emph{One mode}} \label{sec:one_mode}
We first consider the immersible metric
\begin{equation} \label{def:metric_one_mode}
g(x_1,x_2) =
\begin{bmatrix}
1+\frac{\pi^2}{4}\cos\left(\frac{\pi}{4}(x_1+2)\right)^2 & 0 \\
0 & 1
\end{bmatrix}
\end{equation}
for which
\begin{equation}\label{eq:y_exacy_one}
\vy(x_1,x_2) = (x_1,x_2,2\sin(\frac{\pi}{4}(x_1+2)))^T
\end{equation}
is a compatible deformation (isometric immersion), i.e., $\I[\vy]=g$. We impose the boundary conditions $\vvarphi = \vy|_{\Gamma_D}$ and $\Phi = \nabla \vy|_{\Gamma_D}$, so that $\vy\in\V(\vvarphi,\Phi)$ is an admissible deformation and also a global minimizer of the energy.

To challenge our algorithm, we start from a flat initial plate and obtain an admissible initial deformation $\vy_h^0$ using the two preprocessing steps (BC PP and Metric PP) in Algorithm \ref{algo:preprocess} with parameters
\[
\widetilde \tau = 0.05, \quad \widetilde\eps_0=0.1 \quad \mbox{and} \quad \widetilde{tol}=10^{-6}.
\]
The deformation obtained after applying Algorithms \ref{algo:preprocess} and  \ref{algo:main_GF} are displayed in Figure \ref{fig:cylinder_one_mode}. Moreover, the corresponding energy and prestrain defect are reported in Table \ref{tab:cylinder_one_mode}. Notice that the target metric defect $\widetilde\eps_0$ is reached in 49 iterations while 380 iterations of the gradient flow are needed to reach the stationary deformation. 

\begin{figure}[htbp]
	\begin{center}
		\includegraphics[width=4.0cm]{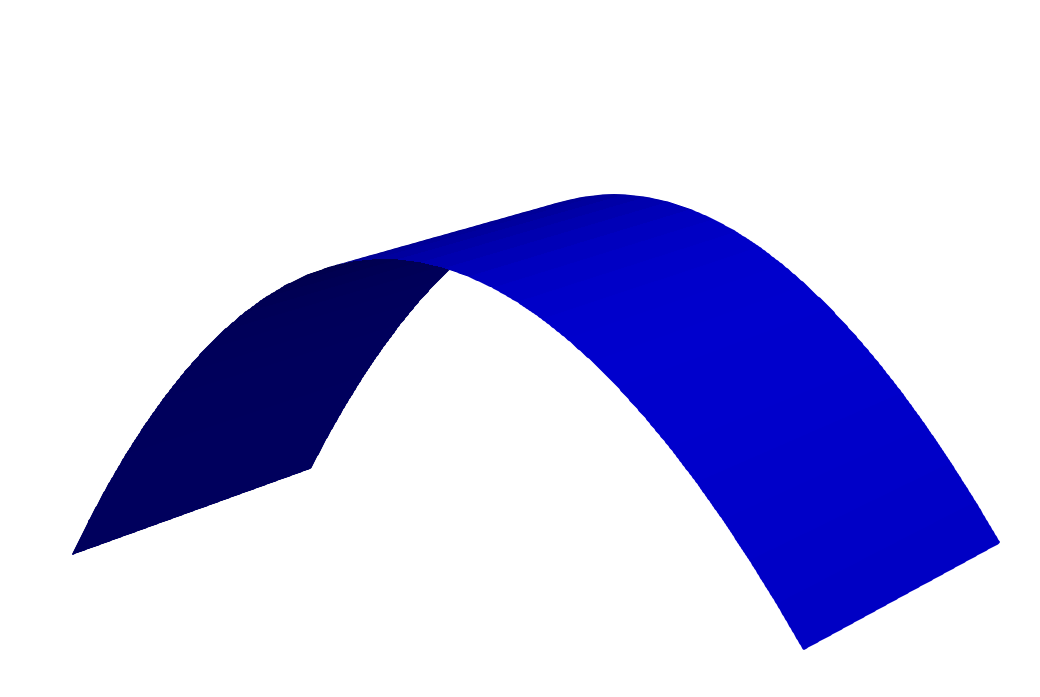}
		\includegraphics[width=4.0cm]{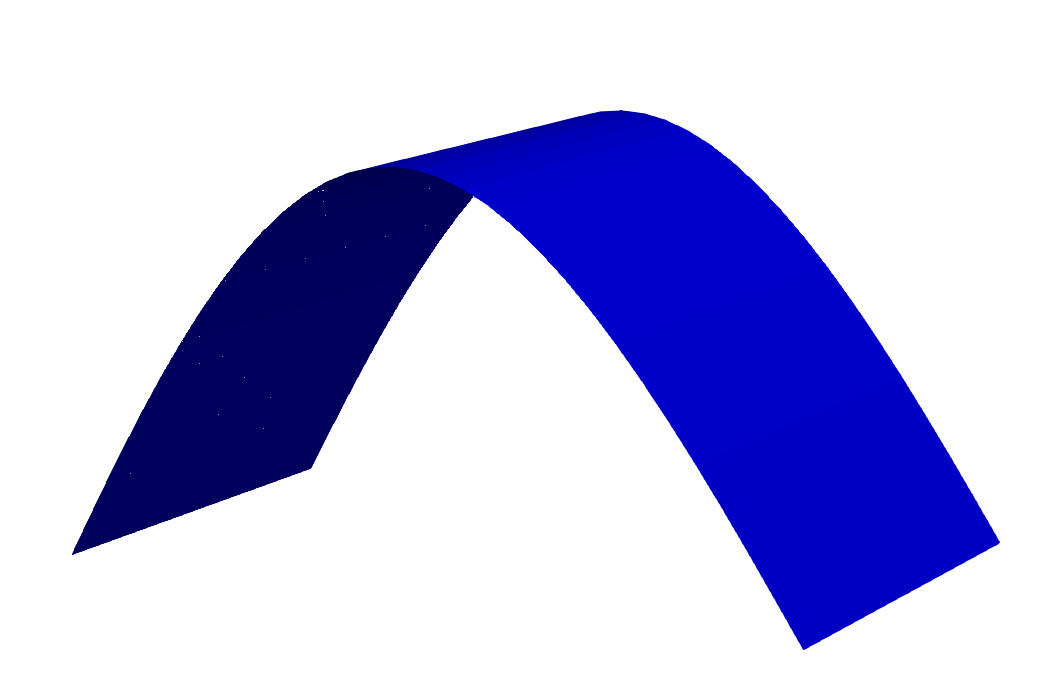}
		\includegraphics[width=4.0cm]{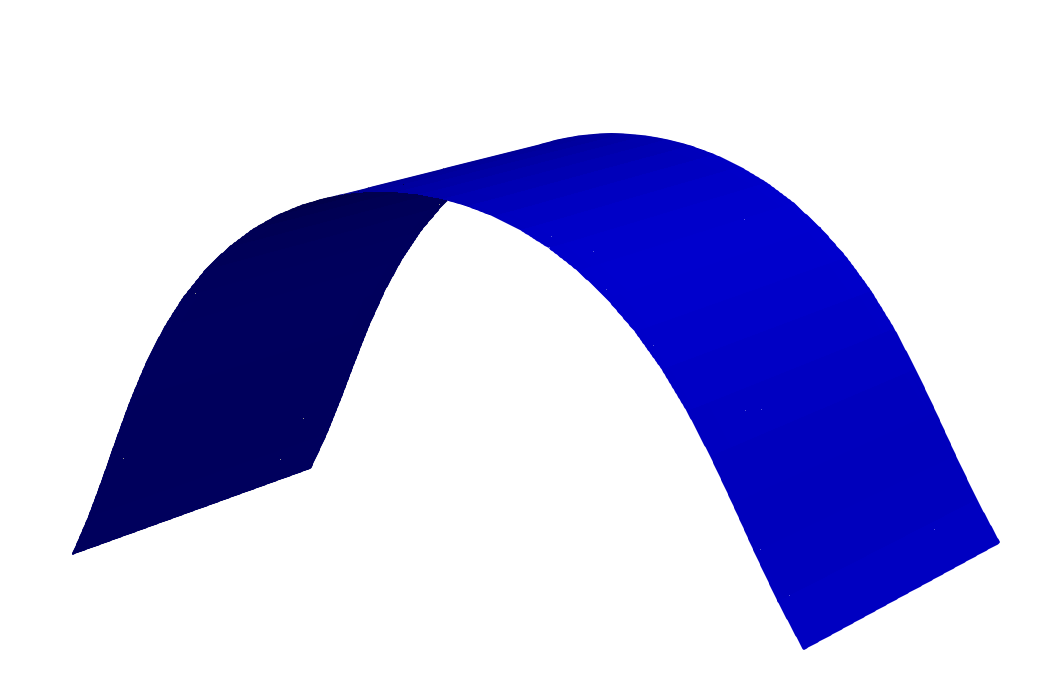}
		\caption{Deformed plate for the cylinder metric with one mode. Left: BC PP; middle: Metric PP; right: Final.} \label{fig:cylinder_one_mode}
	\end{center}
\end{figure}

\begin{table}[htbp]
	\begin{center}
		\begin{tabular}{|c|c|c|c|c|}
			\cline{2-5}
			\multicolumn{1}{c|}{ } & Initial & BC PP & Metric PP & Final \\
			\hline
			$E_h$ & 120.3590 & 1.1951 & 2.5464 & 1.7707 \\
			\hline
			$D_h$ & 9.8696 & 3.2899 & 9.8609E-2 & 9.5183E-2 \\
			\hline
		\end{tabular}
                \vspace{0.3cm}
		\caption{Energy and prestrain defect for the cylinder metric with one mode. All the algorithms behave as intended: the boundary conditions preprocessing (BC PP) reduces the energy by constructing a deformation with compatible boundary conditions, the metric preprocessing (Metric PP) reduces the metric defect and the gradient flow (Final) reduced the energy to its minimal value while keeping a control on the metric defect.} \label{tab:cylinder_one_mode}
	\end{center}	
\end{table}

Interestingly, when no Dirichlet boundary conditions are imposed, i.e., the free boundary case, then the flat deformation (pure stretching) 
\begin{equation*}
\vy(x_1,x_2) = \left(\int_{-2}^{x_1}\sqrt{1+\frac{\pi^2}{4}\cos\left(\frac{\pi}{4}(s+2)\right)^2} ds,x_2,0\right)^T
\end{equation*}
is also compatible with the metric \eqref{def:metric_one_mode} and has a smaller energy. We observe that $y_1(2,x_2)-y_1(-2,x_2)\approx 5.85478$ for $x_2\in(-2,2)$ corresponds to a stretching ratio of approximately $1.5$.  The outcome of Metric PP in Algorithm \ref{algo:preprocess} starting from the flat plate produces an initial deformation with $E_h=0.81755$ and $D_h=0.09574$ using 37 iterations. The stationary solution of the main gradient flow is reached in 68 iterations and  produces a flat plate with energy $E_h=0.376257$ and metric defect $D_h=0.0957329$. 
 
 \subsubsection{\bf \emph{Two modes}}
This example is similar to that of Section \ref{sec:one_mode} but with one additional \emph{mode} of higher frequency, namely we consider the immersible metric
\begin{equation*}
g(x_1,x_2) =
\begin{bmatrix}  
1+\left(\frac{\pi}{2}\cos\left(\frac{\pi}{4}(x_1+2)\right)+\frac{5\pi}{8}\cos\left(\frac{5\pi}{4}(x_1+2)\right)\right)^2 & 0 \\
0 & 1
\end{bmatrix}.
\end{equation*}
In this case, the deformation
$$
\vy(x_1,x_2)=\left(x_1,x_2, 2\sin\left(\frac{\pi}{4}(x_1+2)\right)+\frac{1}{2}\sin\left(\frac{5\pi}{4}(x_1+2)\right)\right)^T
$$
is compatible (isometric immersion) with the metric and we impose the corresponding Dirichlet boundary conditions on $\Gamma_D$ as in Section \ref{sec:one_mode}.

Using the same setup as in Section \ref{sec:one_mode}, Algorithm \ref{algo:preprocess} produced a suitable initial guess in 1271 iterations, while Algorithm \ref{algo:main_GF} terminated after 1833 steps. The deformations obtained after each of the three main procedures are given in Figure \ref{fig:cylinder_two_modes}. The corresponding energy and prestrain defect are reported in Table \ref{tab:cylinder_two_modes}. We see that the main gradient flow decreases the energy upon bending the shape but keeping the metric defect roughly constant.

\begin{figure}[htbp]
	\begin{center}
		\includegraphics[width=4.1cm]{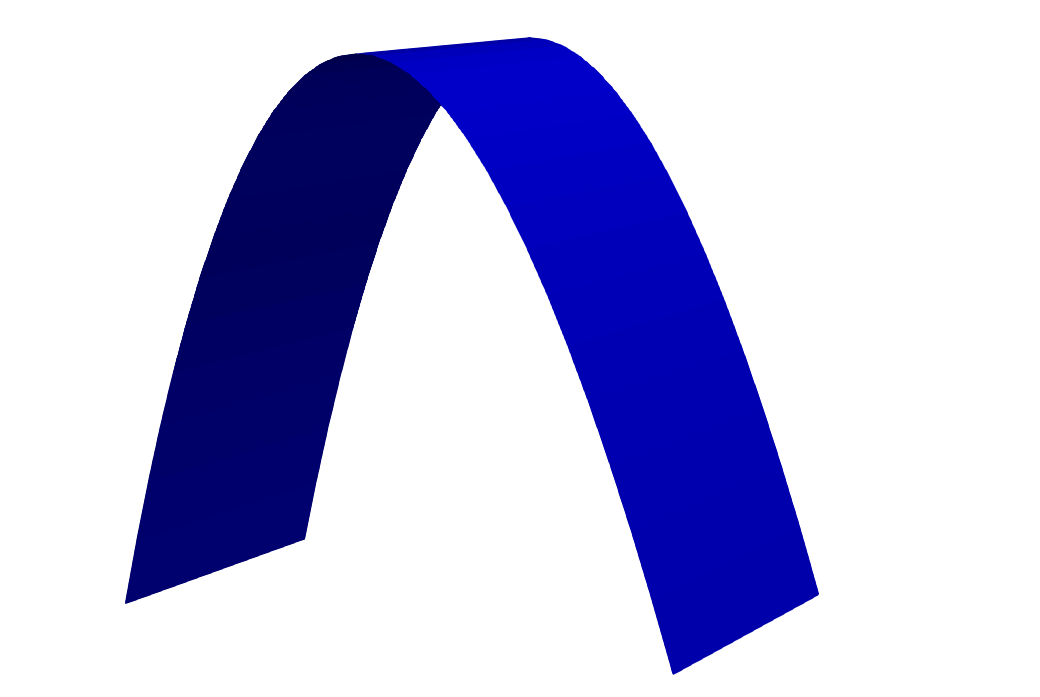}
		\includegraphics[width=4.1cm]{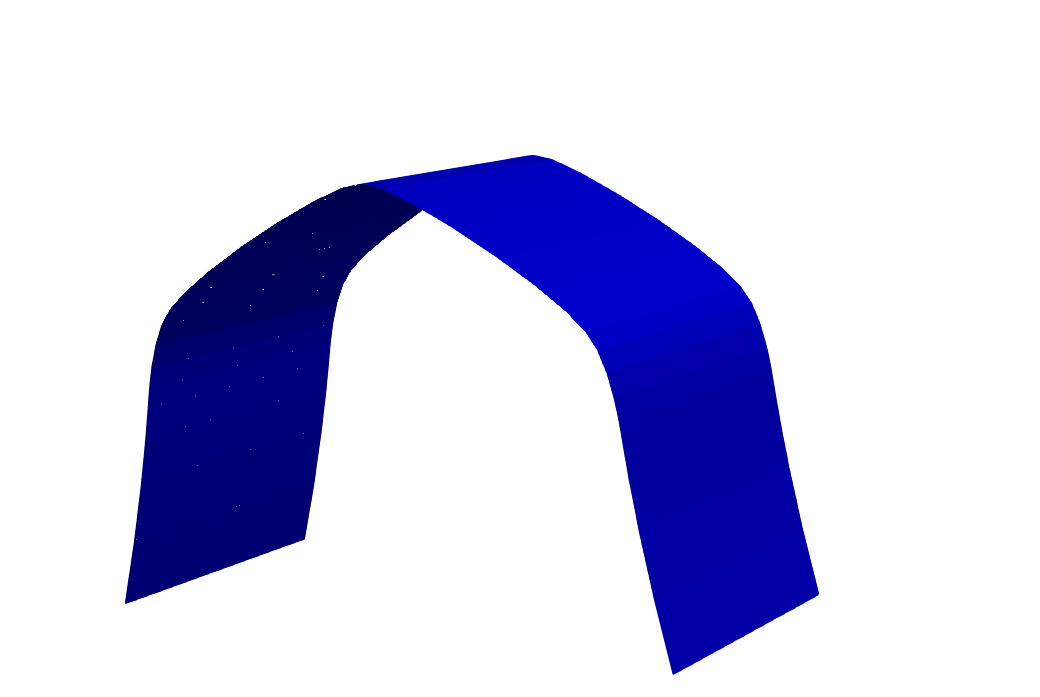}
		\includegraphics[width=4.1cm]{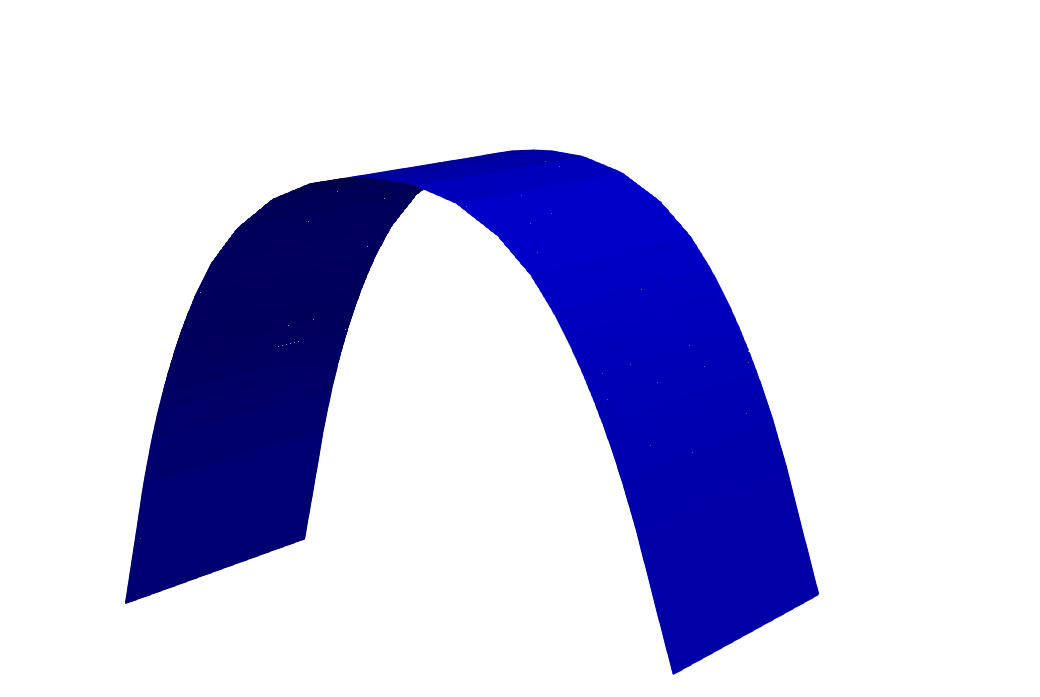}
		\caption{Deformed plate for the cylinder metric with two modes. Left: BC PP; middle: Metric PP; right: Final.
		Compare with Figure~\ref{fig:cylinder_one_mode} corresponding to the metric \eqref{def:metric_one_mode} (one mode).} \label{fig:cylinder_two_modes}
	\end{center}
\end{figure}

\begin{table}[htbp]
	\begin{center}
		\begin{tabular}{|c|c|c|c|c|}
			\cline{2-5}
			\multicolumn{1}{c|}{ } & Initial & BC PP & Metric PP & Final \\
			\hline
			$E_h$ & 413.7400 & 5.5344 & 28.9184 & 13.0706 \\
			\hline
			$D_h$ & 25.2909 & 26.1854 & 9.9997E-2 & 1.0178E-1 \\
			\hline
		\end{tabular}
                \vspace{0.3cm}
		\caption{Energy and metric defect for the cylinder metric with two modes. Compare with Table~\ref{tab:cylinder_one_mode} corresponding to one mode.} \label{tab:cylinder_two_modes}
	\end{center}	
\end{table}

\subsection{Rectangle with a \emph{catenoidal-helicoidal} metric}
Let $\Omega$ be a rectangle to be specified later and let the metric be
\begin{equation} \label{def:g_cate_heli}
g(x_1,x_2) =
\begin{bmatrix}
\cosh(x_2)^2 & 0 \\ 0 & \cosh(x_2)^2 
\end{bmatrix}.
\end{equation}
Notice that the family of deformations $\vy^\alpha:\Omega\rightarrow\mathbb{R}^3$, $0\leq \alpha \leq \frac{\pi}{2}$, defined by
\begin{equation} \label{def_y_alpha}
\vy^{\alpha}:=\cos(\alpha)\bar \vy + \sin(\alpha)\tilde{\vy}
\end{equation}
with
\begin{equation*}
\bar \vy(x_1,x_2)=
\begin{bmatrix}
\sinh(x_2)\sin(x_1) \\ -\sinh(x_2)\cos(x_1) \\ x_1 
\end{bmatrix},
\quad
\tilde \vy(x_1,x_2)=
\begin{bmatrix}
\cosh(x_2)\cos(x_1) \\ \cosh(x_2)\sin(x_1) \\ x_2 
\end{bmatrix},
\end{equation*}
are all compatible with the metric \eqref{def:g_cate_heli}. The parameter $\alpha=0$  corresponds to an {\it helicoid} while $\alpha=\pi/2$ represents a {\it catenoid}. Furthermore, the energy $E[\vy^\alpha]$ defined in \eqref{def:Eg_D2y} (or equivalently $E[\vy^\alpha]$ given in \eqref{E:final-bending}) has the same value for all $\alpha$. To see this, it suffices to note that the second fundamental form of $\vy^\alpha$ is given by
\begin{equation*}
\II[\vy^\alpha] = 
\begin{bmatrix}
-\cos(\alpha) & \sin(\alpha) \\ \sin(\alpha) & \cos(\alpha)
\end{bmatrix},
\quad  D^2y^\alpha_k=\cos(\alpha)D^2\bar y_k+\sin(\alpha)D^2\tilde y_k,
\end{equation*}
where $y^\alpha_k=(\vy^\alpha)_k$ is the $k$th component of $\vy^\alpha$ for $k=1,2,3$.

In the following sections, we show how the two extreme deformations can be obtained either by imposing the adequate boundary conditions or by starting with an initial configuration sufficiently close to the energy minima.

\subsubsection{\bf \emph{Catenoid case}}
We consider the domain $\Omega=(0,6.25)\times(-1,1)$. The mesh $\Th$ consists of 896 (almost square) rectangular cells of diameter $h_{\K}=h\approx 0.17$ (26880 DoFs).
We do not impose any boundary conditions on the deformations, which corresponds to $\Gamma_D=\emptyset$ (free boundary condition). We apply Algorithm \ref{algo:preprocess} (initialization) and start the metric preprocessing with $\widetilde{\vy}_h^0=\widehat \vy_h$, the solution to the bi-Laplacian problem \eqref{bi-Laplacian} with fictitious force $\widehat \vf = (0,0,4)^T$ and boundary condition $\vvarphi(\vx)=(\vx,0)$ on $\partial\Omega$ (but without $\Phi$). Moreover, we use three tolerances $\widetilde{tol} = 0.1, \, 0.025, \, 0.01$ for this preprocessing to investigate the effect on Algorithm \ref{algo:main_GF} (gradient flow). Figure \ref{fig:catenoid_2} depicts final configurations produced by Algorithm \ref{algo:main_GF} with the outputs of Algorithm \ref{algo:preprocess}. Corresponding energies and metric defects are given in Table \ref{tab:catenoid}. We see that the metric defect diminishes, as $\widetilde{tol}$ decreases, and the surface tends to a full (closed) catenoid as expected from the relation \eqref{def_y_alpha} with $\alpha =\pi/2$.

\begin{figure}[htbp]
	\begin{center}
	\begin{tabular}{ccc}
		\includegraphics[width=4.0cm]{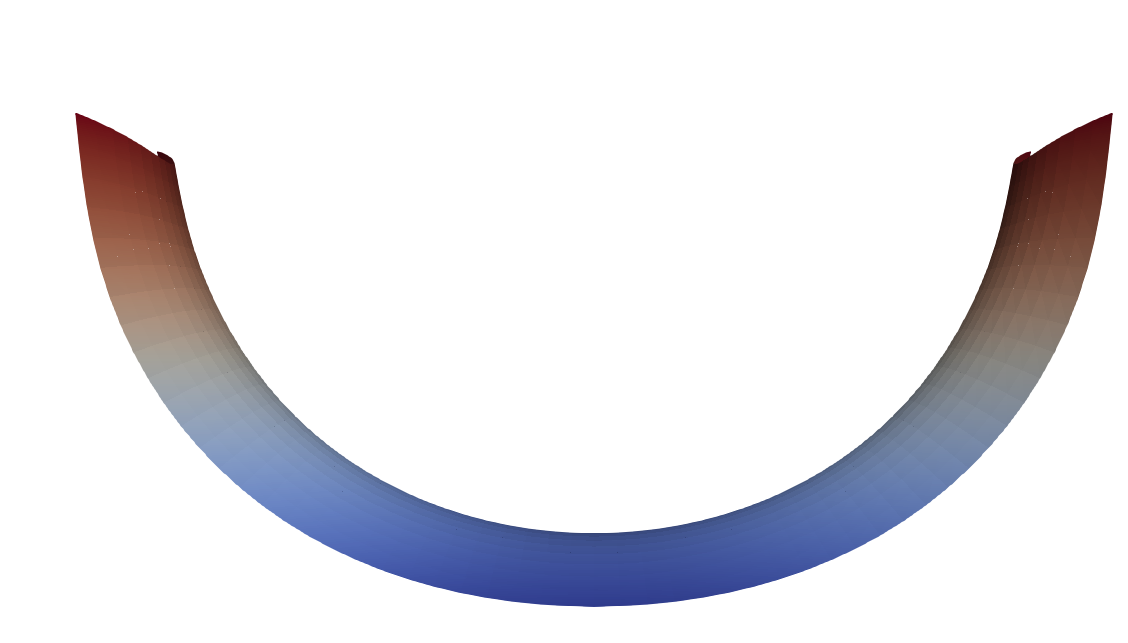}&
		\includegraphics[width=4.0cm]{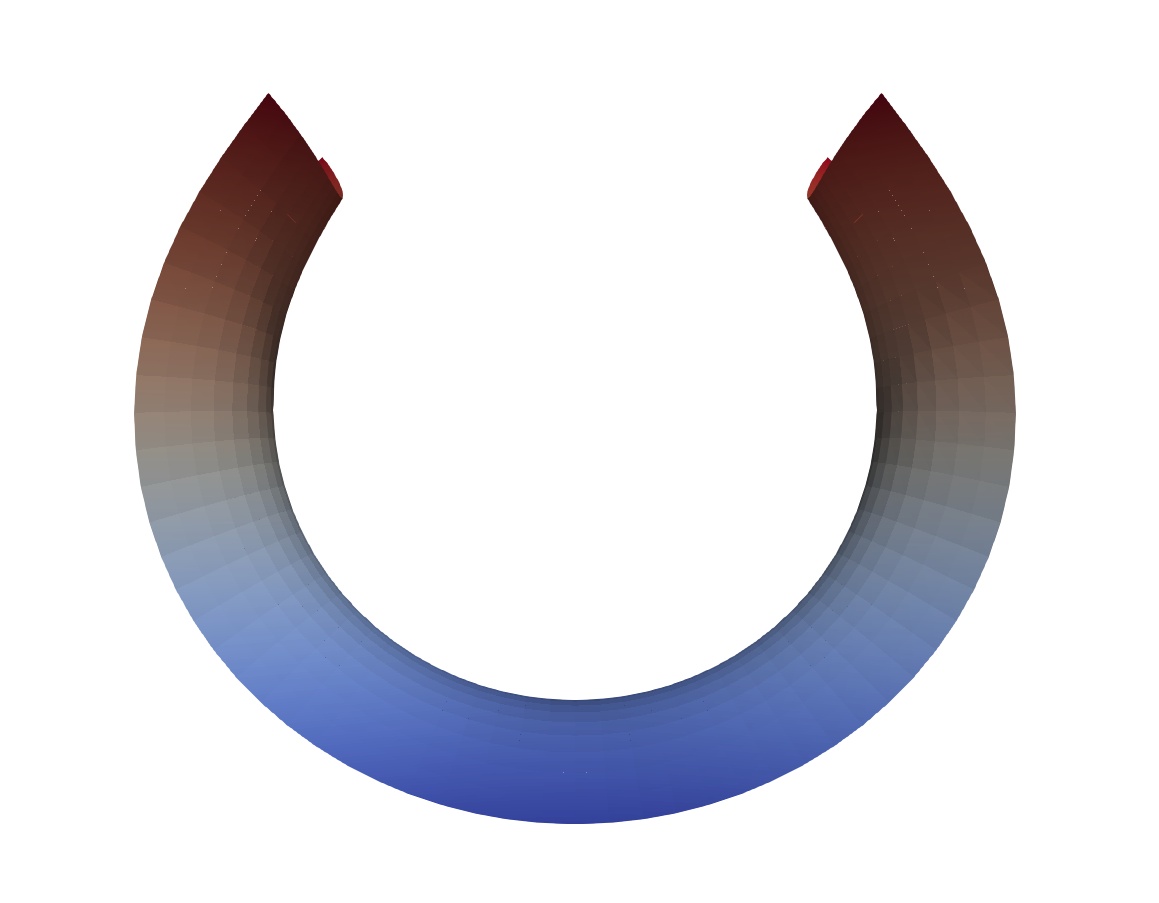}&
		\includegraphics[width=4.0cm]{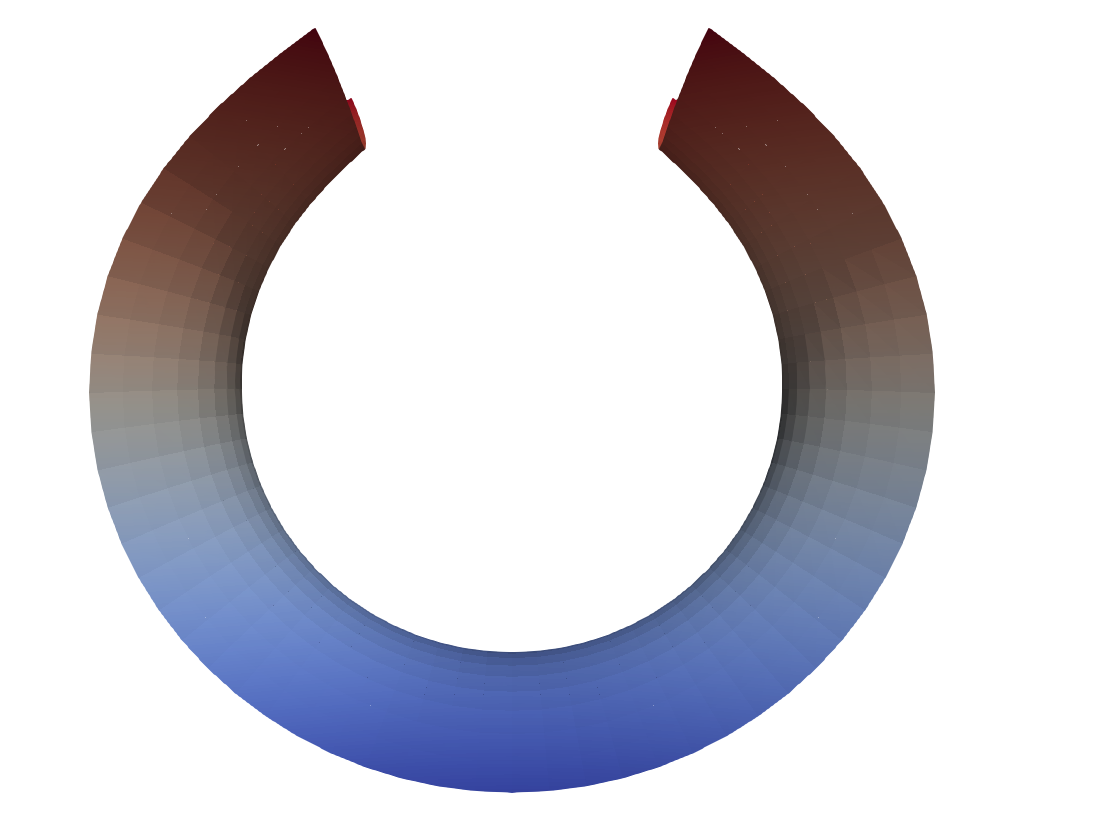}\\
		\includegraphics[width=4.0cm]{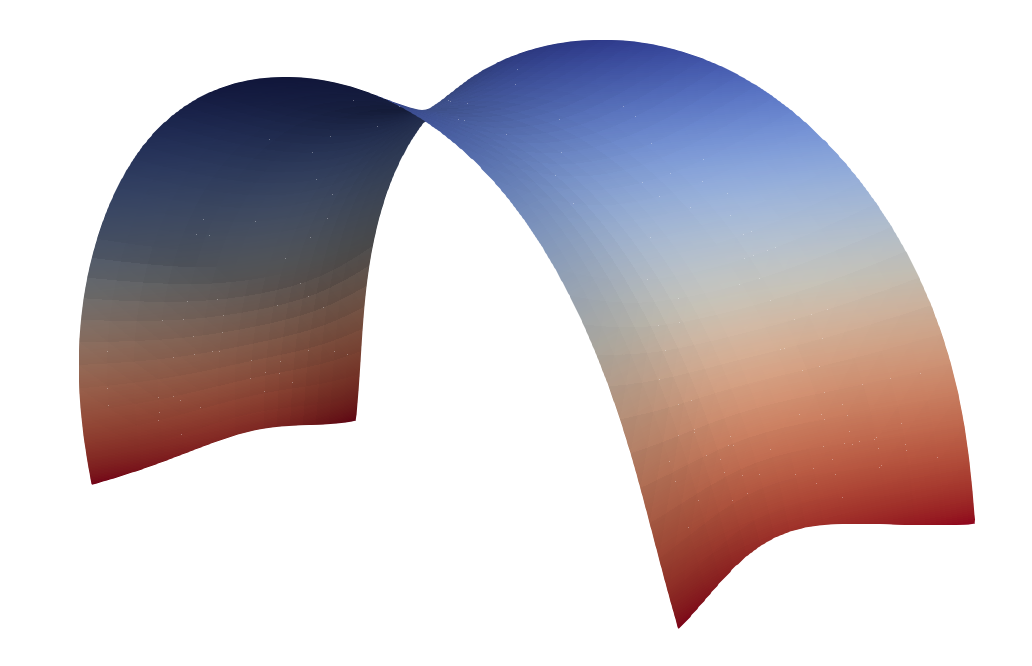}&
		\includegraphics[width=4.0cm]{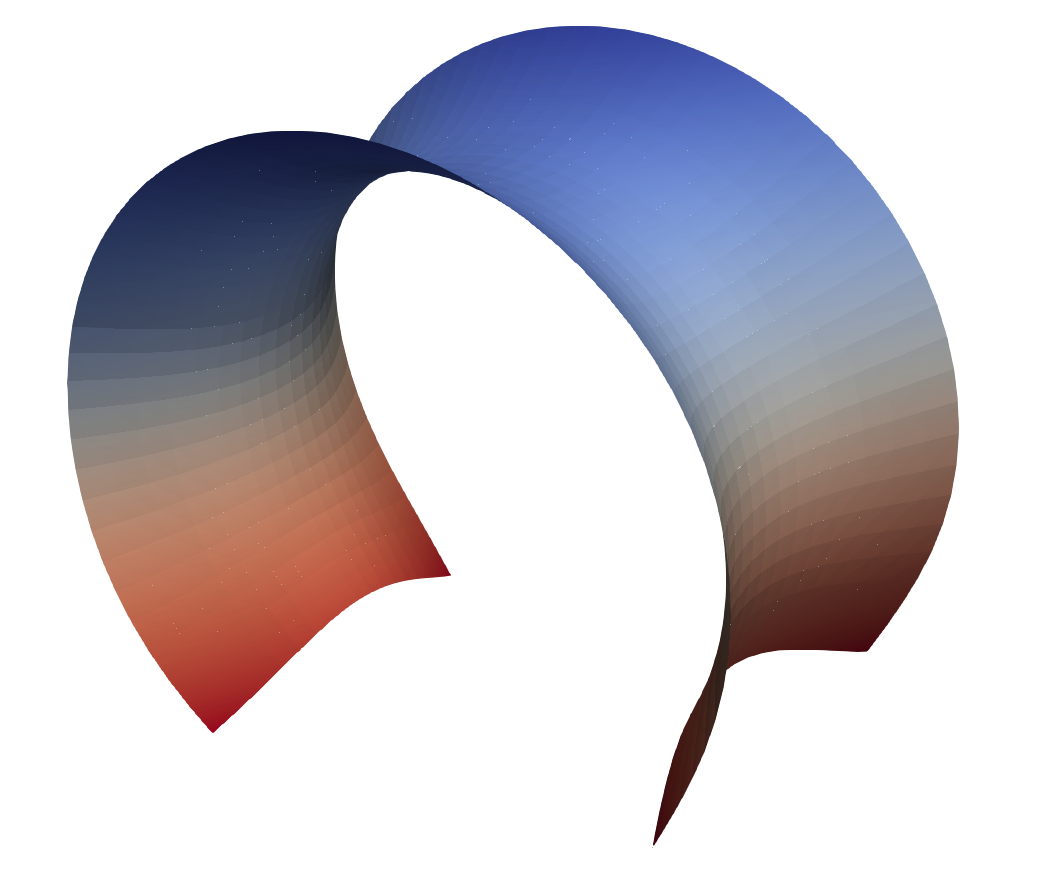}&
		\includegraphics[width=4.0cm]{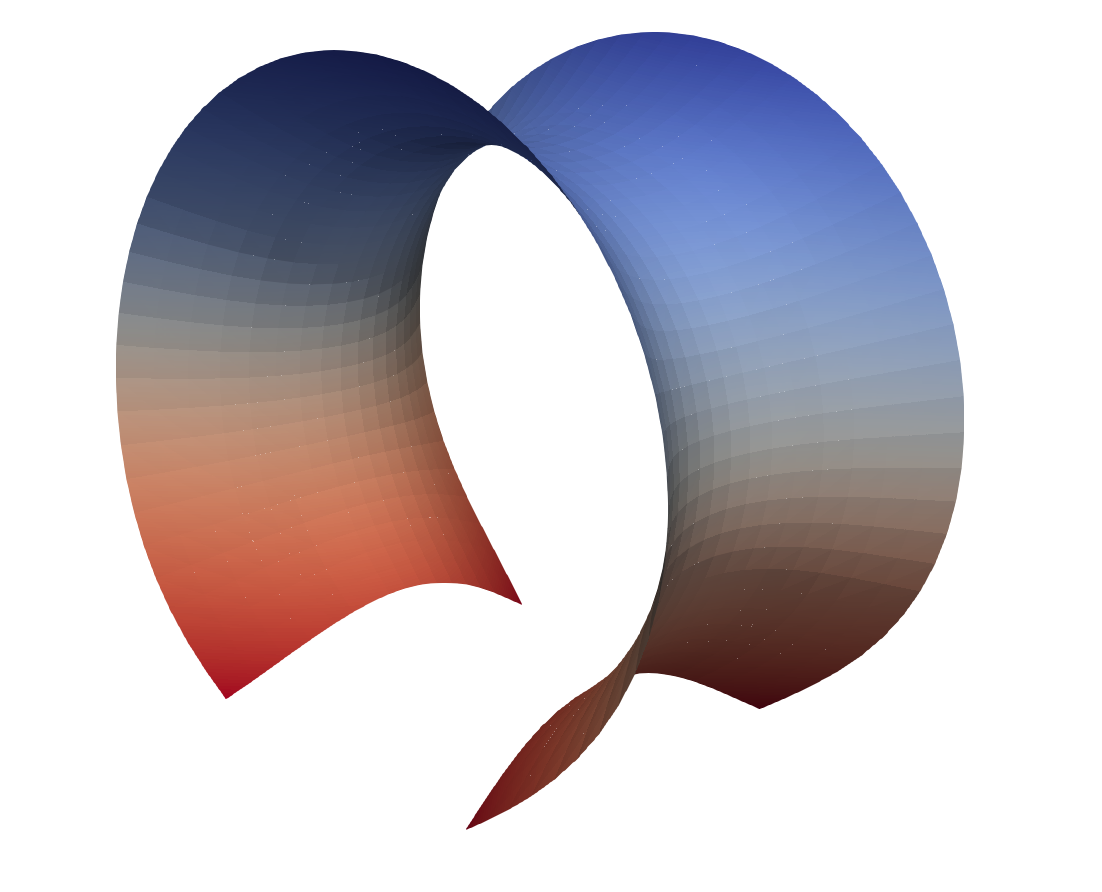}
		\end{tabular}
		\caption{Final configurations for the \emph{catenoidal-helicoidal} metric with free boundary condition using tolerances $\widetilde{tol}= 0.1$ (left), 0.025 (middle) and 0.01 (right) for the metric preprocessing of Algorithm \ref{algo:preprocess}. The second row offers a different view of the final deformations.}\label{fig:catenoid_2}
	\end{center}
\end{figure}

\begin{table}[htbp]
	\begin{center}
		\begin{tabular}{|c|c|c|c|c|c|c|}
			\cline{2-7}
			\multicolumn{1}{c}{ } & \multicolumn{2}{|c|}{$\widetilde{tol}=0.1$} & \multicolumn{2}{|c|}{$\widetilde{tol}=0.025$} & \multicolumn{2}{|c|}{$\widetilde{tol}=0.01$} \\
			\cline{2-7}
			\multicolumn{1}{c|}{ } & Algo 2 & Algo 1 & Algo 2 & Algo 1 & Algo 2 & Algo 1\\
			\hline
			$E_h$ & 36.9461 & 4.01094 & 103.838 & 7.42946 & 146.215 & 8.78622\\
			\hline
			$D_h$ & 2.62428 & 3.19839 & 1.36864 & 2.69258 & 0.853431 & 1.83427\\
			\hline
		\end{tabular}
                \vspace{0.3cm}
		\caption{Energies $E_h$ and metric defects $D_h$ produced by Algorithms \ref{algo:preprocess} and \ref{algo:main_GF} for the \emph{catenoidal-helicoidal} metric with free boundary condition. We see that the tolerance $\widetilde{tol}$ of Algorithms \ref{algo:preprocess} controls $D_h$ and that Algorithm \ref{algo:main_GF} does not increase $D_h$ much but reduces $E_h$ substantially. The smaller $\widetilde{tol}$ is the closer the computed surface gets to the catenoid, which is closed (see Figure \ref{fig:catenoid_2}).} \label{tab:catenoid}
	\end{center}	
\end{table}

\subsubsection{\bf \emph{Helicoid shape}}
All the deformations $\vy^\alpha$ in \eqref{def_y_alpha} are global minima of the energy but the final deformation is not always catenoid-like as in the previous section. 
In fact, starting with an initial deformation close to $\vy^\alpha$ with $\alpha=0$ leads to an helicoid-like shape. We postpone such an approach to Section~\ref{ss:osc}. An alternative to achieve an helicoid-like shape is to enforce the appropriate boundary conditions as described now.

We consider the domain $\Omega=(0,4.5)\times(-1,1)$ and enforce Dirichlet boundary conditions on $\Gamma_D=\{0\}\times (-1,1)$ compatible with $\vy^\alpha$ given by \eqref{def_y_alpha} with $\alpha=0$.  
The mesh $\Th$ consists of 640 (almost square) rectangular cells of diameter $h_{\K}=h\approx 0.17$ (19200 DoFs) and the pseudo time-step is $\tau = 0.01$.

We apply Algorithm \ref{algo:preprocess} (preprocessing) with $\widetilde \tau = 0.01$, $\widetilde\eps_0=0.1$ and $\widetilde{tol}=10^{-3}$ to obtain the initial deformation $\vy_h^0$. The preprocessing stopped after 2555 iterations, meeting the criteria $\widetilde \tau^{-1}|\widetilde E_h[\tilde \vy_h^{n+1}]-\widetilde E_h[\tilde \vy_h^n]|\leq \widetilde{tol}$, while 2989 iterations of Algorithm \ref{algo:main_GF} (gradient flow) were needed to reach the stationary deformation.
Figure \ref{fig:helicoid} displays the output of the boundary conditions preprocessing and the metric preprocessing, the two stages of Algorithm \ref{algo:preprocess}, as well as two views of the output of Algorithm \ref{algo:main_GF}. The corresponding energies and metric defects are reported in Table \ref{tab:helicoid}.

\begin{figure}[htbp]
	\begin{center}
		\includegraphics[width=3.0cm]{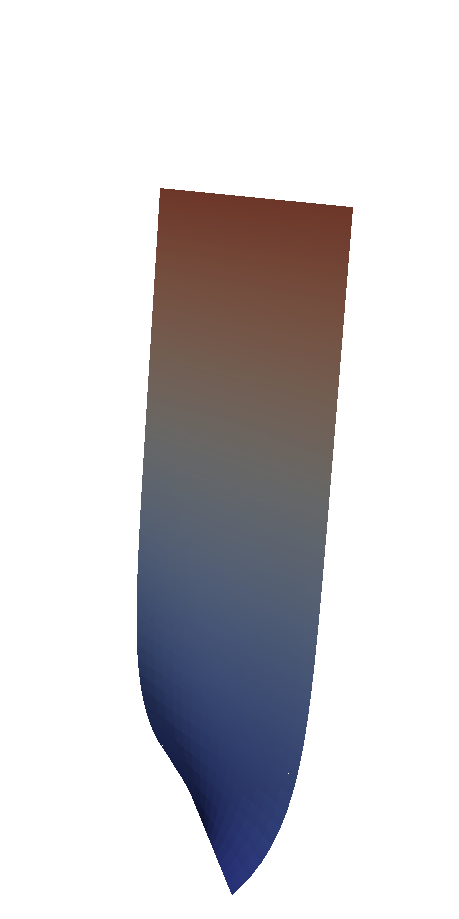}
		\includegraphics[width=3.0cm]{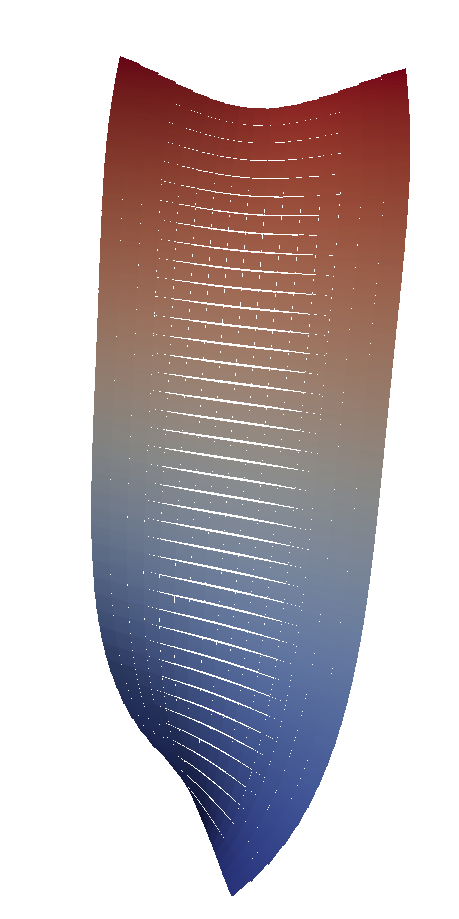}
		\includegraphics[width=3.0cm]{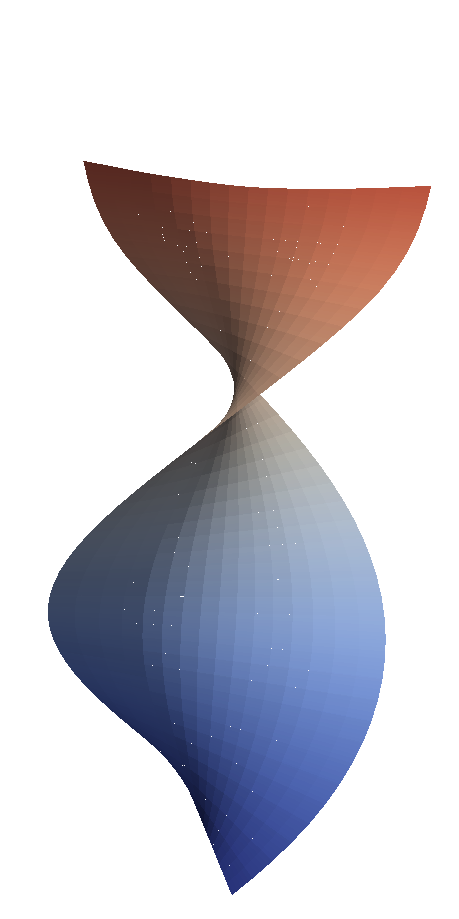}
		\includegraphics[width=3.0cm]{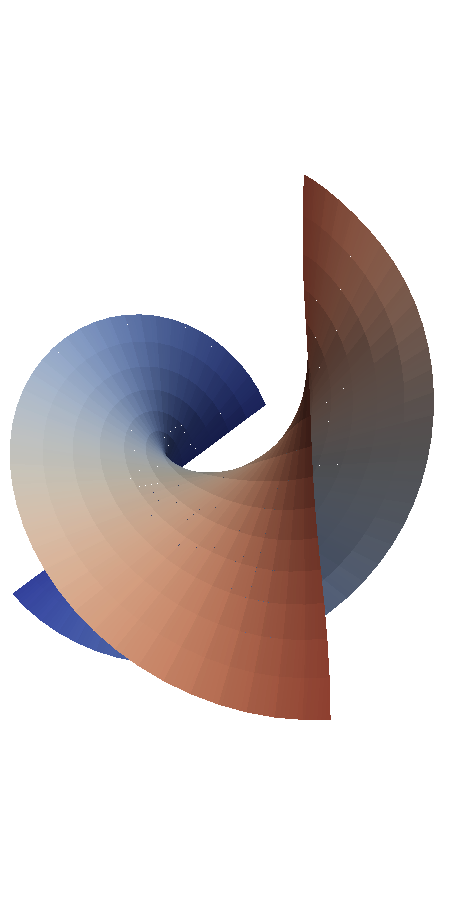} 
		\caption{Deformed plate for the \emph{catenoidal-helicoidal} with Dirichlet boundary conditions on the bottom side corresponding to $\{0\}\times (-1,1)$. From left to right: BC PP, Metric PP, and two views (the last from the top) of the output of Algoritm \ref{algo:main_GF}.} \label{fig:helicoid}
	\end{center}
\end{figure}

\begin{table}[htbp]
	\begin{center}
		\begin{tabular}{|c|c|c|c|c|}
			\cline{2-5}
			\multicolumn{1}{c|}{ } & Initial & BC PP & Metric PP & Final \\
			\hline
			$E_h$ & 138020 & 0.658342 & 202.144 & 7.7461 \\
			\hline
			$D_h$ & 5.17664 & 5.16565 & 0.248419 & 1.15764 \\
			\hline
		\end{tabular}
                \vspace{0.3cm}
		\caption{Energies and metric defects for the helicoid-like shape with
                Dirichlet boundary conditions on the bottom side.} \label{tab:helicoid}
	\end{center}	
\end{table}
   
\subsection{Disc with positive or negative Gaussian curvature} \label{sec:disc}
We now consider a plate consisting of a disc of radius $1$
\begin{equation*}
\Omega= \Big\{(x_1,x_2)\in\mathbb{R}^2: \quad x_1^2+x_2^2<1 \Big\}.
\end{equation*}
We prescribe several immersible metrics $g$ and impose no boundary conditions.

The mesh $\Th$ consists of $320$ quadrilateral cells of diameter $0.103553 \leq h_{\K}\leq 0.208375$ (9600 DoFs) and the pseudo time-step is $\tau = 0.01$. Moreover, we initialize the metric preprocessing of Algorithm \ref{algo:preprocess} with the identity function $\widetilde \vy_h^0(\vx)=(\vx,0)^T$ for $\vx\in\Omega$, and $\widetilde\tau=0.05$, $\widetilde\eps_0=0.1$, $\widetilde{tol}=10^{-6}$.
   
\subsubsection{\bf \emph{Bubble - positive Gaussian curvature}}
To obtain a bubble-like shape, we consider for any $\alpha>0$ the metric
\begin{equation}
g(x_1,x_2) =
\begin{bmatrix}
1+\alpha\frac{\pi^2}{4}\cos\left(\frac{\pi}{2}(1-r)\right)^2\frac{x_1^2}{r^2} & \alpha\frac{\pi^2}{4}\cos\left(\frac{\pi}{2}(1-r)\right)^2\frac{x_1x_2}{r^2} \\
\alpha\frac{\pi^2}{4}\cos\left(\frac{\pi}{2}(1-r)\right)^2\frac{x_1x_2}{r^2} & 1+\alpha\frac{\pi^2}{4}\cos\left(\frac{\pi}{2}(1-r)\right)^2\frac{x_2^2}{r^2}
\end{bmatrix}
\end{equation}
with $r:=\sqrt{x_1^2+x_2^2}$.
A compatible deformation is given by
\begin{equation*}
\vy(x_1,x_2)=\left(x_1,x_2,\sqrt{\alpha}\sin\left(\frac{\pi}{2}(1-r)\right)\right)^T,
\end{equation*}
i.e., $\vy$ is an isometric immersion $\I[\vy]=g$.
In the following, we choose $\alpha=0.2$.

In the absence of boundary conditions and forcing term, the flat configuration $\tilde \vy_h^0(\Omega)=\Omega$ has zero energy but has a metric defect of $D_h= 1.0857$. Algorithm \ref{algo:preprocess} (preprocessing) performs 877 iterations to deliver an energy $E_h=35.3261$ and a metric defect $D_h=0.0999797$. Algorithm \ref{algo:preprocess} only stretches the plate which remains flat; see Figure \ref{fig:bubble} (left and middle). Algorithm \ref{algo:main_GF} (gradient flow) then deforms the plate out of plane, and reaches a stationary state after 918 iterations with $E_h=2.08544$, while keeping the metric defect $D_h=0.087839$; see Figure \ref{fig:bubble}-right. 

We point out that the discussion after \eqref{E:coercive} also applies to Algorithm \ref{algo:main_GF}, i.e., a flat initial configuration ($y_3=0$) will theoretically lead to flat deformations throughout the gradient flow. However, in this example and the ones in Section \ref{S:gel-discs}, the the initial deformation produced by Algorithm \ref{algo:preprocess} has a non-vanishing third component $y_3$ (order of machine precision). Furthermore, Algorithm \ref{algo:preprocess} may also produce discontinuous configurations (as for the initial deformation in Figure~\ref{fig:bubble} left and middle) to accommodate for the constraint and will thus have a relatively large energy due to the jump penalty term. 
These two aspects combined may be responsible for the main gradient flow Algorithm~\ref{algo:main_GF} to produce out of plane deformations even when starting with a theoretical flat initial configuration. This is the case when starting with a disc with positive Gaussian curvature metric as in Figure~\ref{fig:bubble}.

\begin{figure}[htbp]
	\begin{center}
	  \begin{tabular}{cc||c}
		\includegraphics[width=4.1cm]{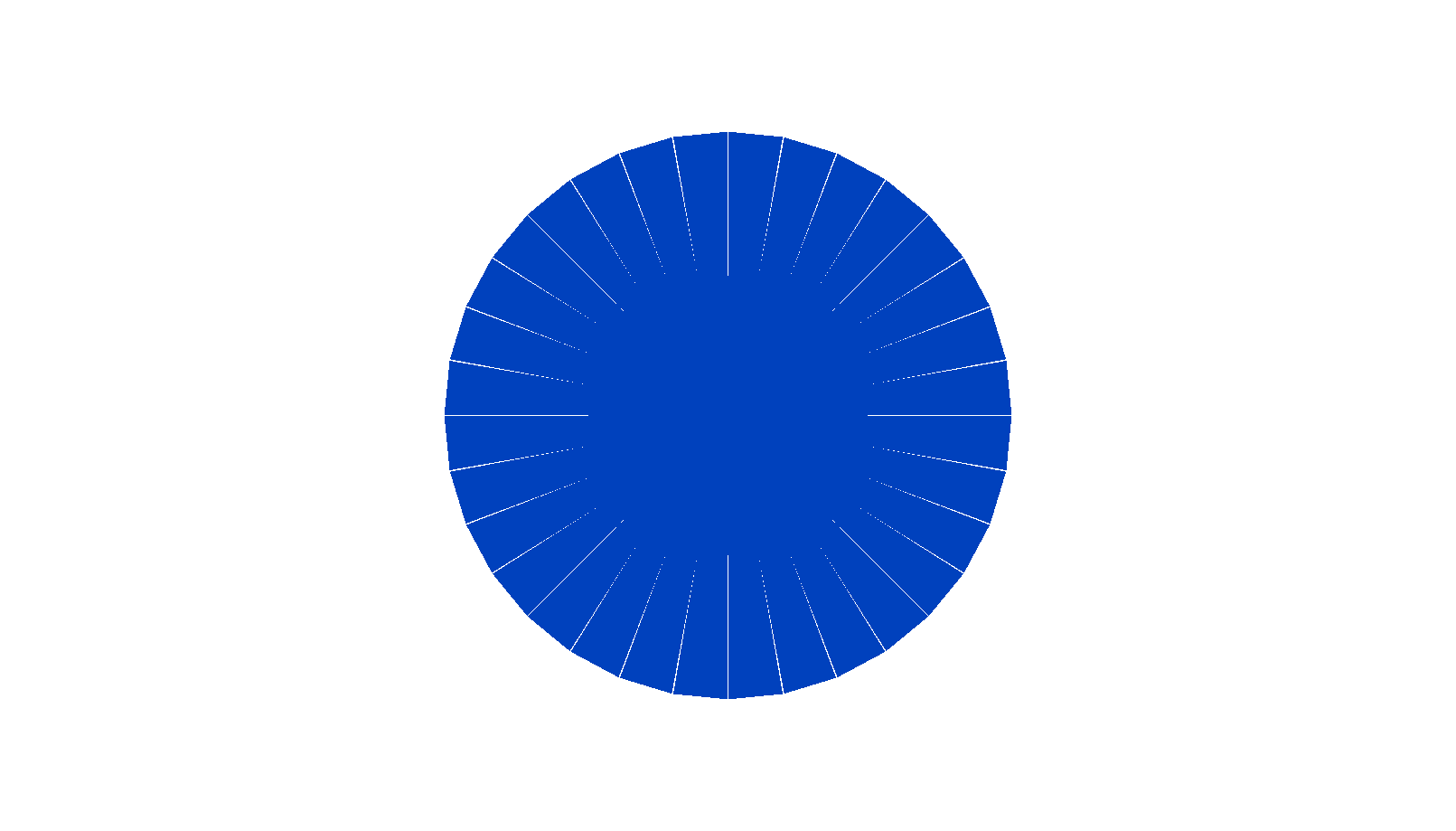}&
		\includegraphics[width=4.1cm]{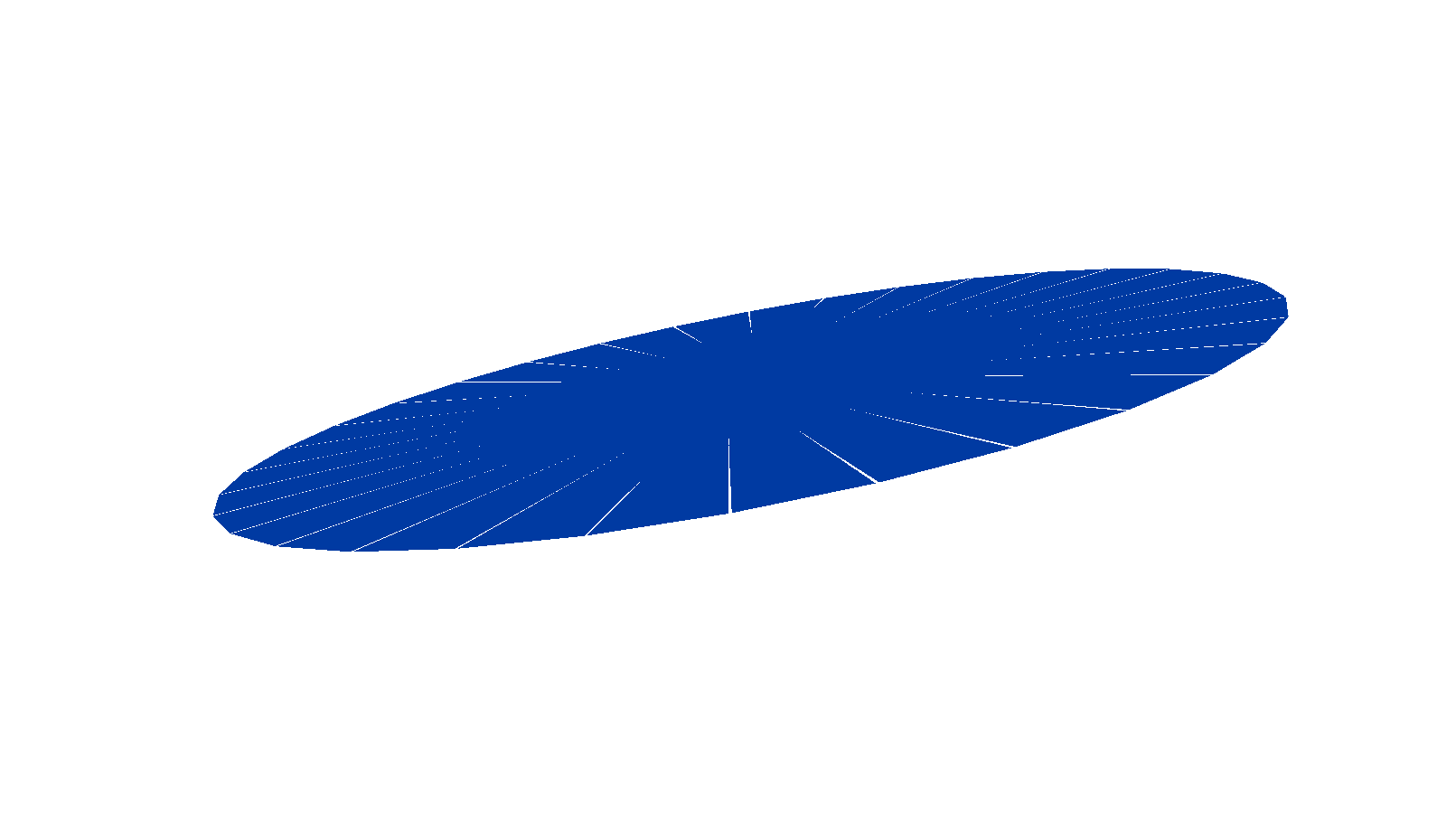}&
		\includegraphics[width=4.1cm]{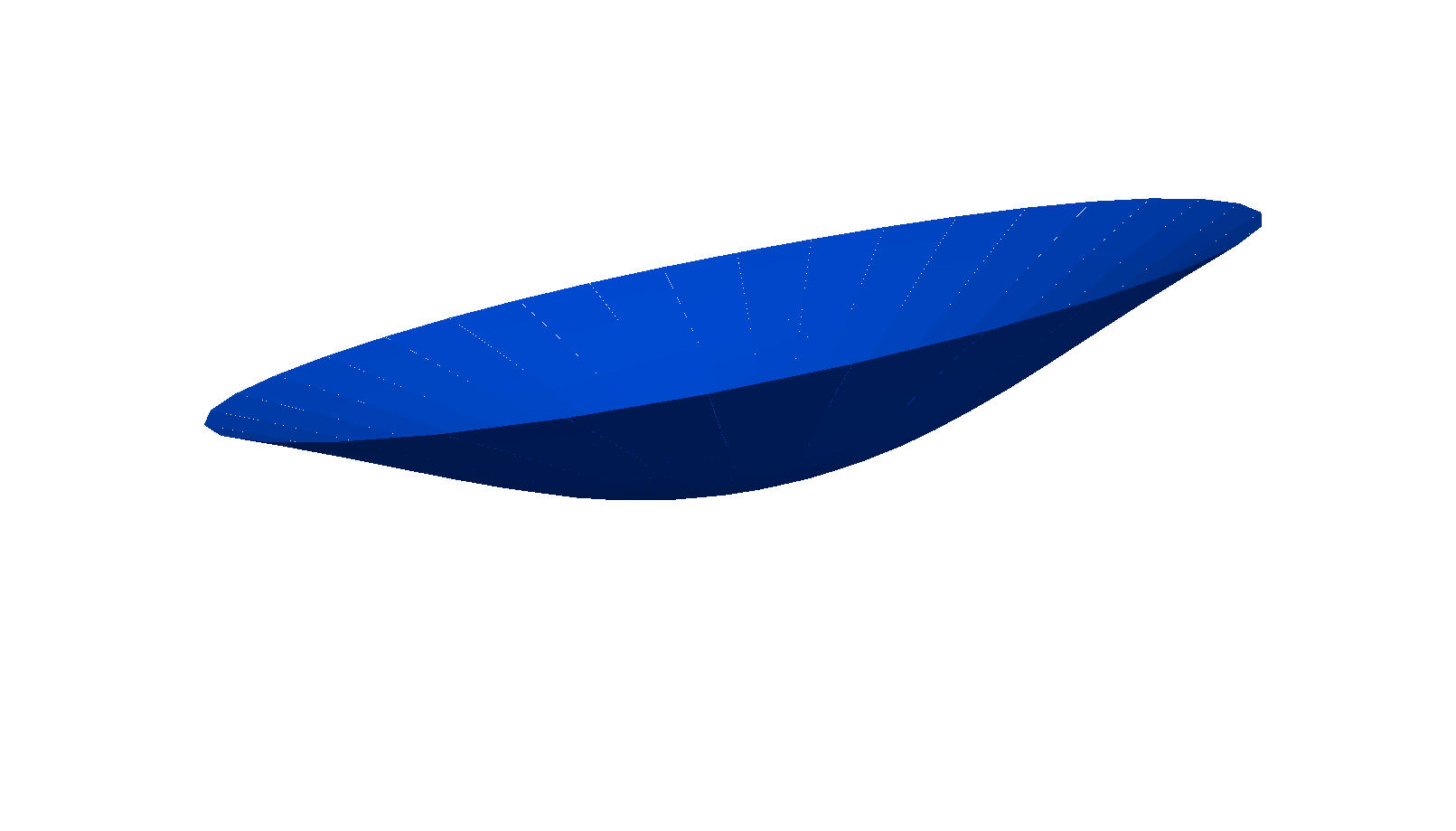}
	\end{tabular}
	\caption{Deformed plate for the disc with positive Gaussian curvature metric. Algorithm \ref{algo:preprocess} stretches the plate but keeps it flat (left and middle). Algorithm \ref{algo:main_GF} gives rise to an ellipsoidal shape (right).} \label{fig:bubble}
	\end{center}
\end{figure}

\subsubsection{\bf \emph{Hyperbolic paraboloid - negative Gaussian curvature}}
We consider the immersible metric $g$ with negative Gaussian curvature
\begin{equation}
g(x_1,x_2) =
\begin{bmatrix}
1+x_2^2 & x_1x_2 \\
x_1x_2 & 1+x_1^2
\end{bmatrix}.
\end{equation}
A compatible deformation is given by $\vy(x_1,x_2)=(x_1,x_2,x_1x_2)^T$, i.e.,
$\I[\vy]=g$.

In this setting, the flat configuration has a prestrain defect of $D_h= 1.56565$ (still vanishing energy). Algorithm \ref{algo:preprocess} (preprocessing) performs 856 iterations to reach the energy $E_h= 50.3934$ and metric defect $D_h=0.0999757$. Algorithm \ref{algo:main_GF} (gradient flow) executes 1133 iterations to deliver an energy $E_h=1.83112$ and metric defect $D_h=0.0980273$. Again, the metric defect remains basicallly constant throughout the main gradient flow, while the energy is significantly decreased. Figure \ref{fig:hyper_para} shows the initial (left) and final (middle) deformations of Algoritm \ref{algo:preprocess} and the output of Algorithm \ref{algo:main_GF} (right) which exhibit a sadlle point structure.

\begin{figure}[htbp]
	\begin{center}
	\begin{tabular}{cc||c}
		\includegraphics[width=4.1cm]{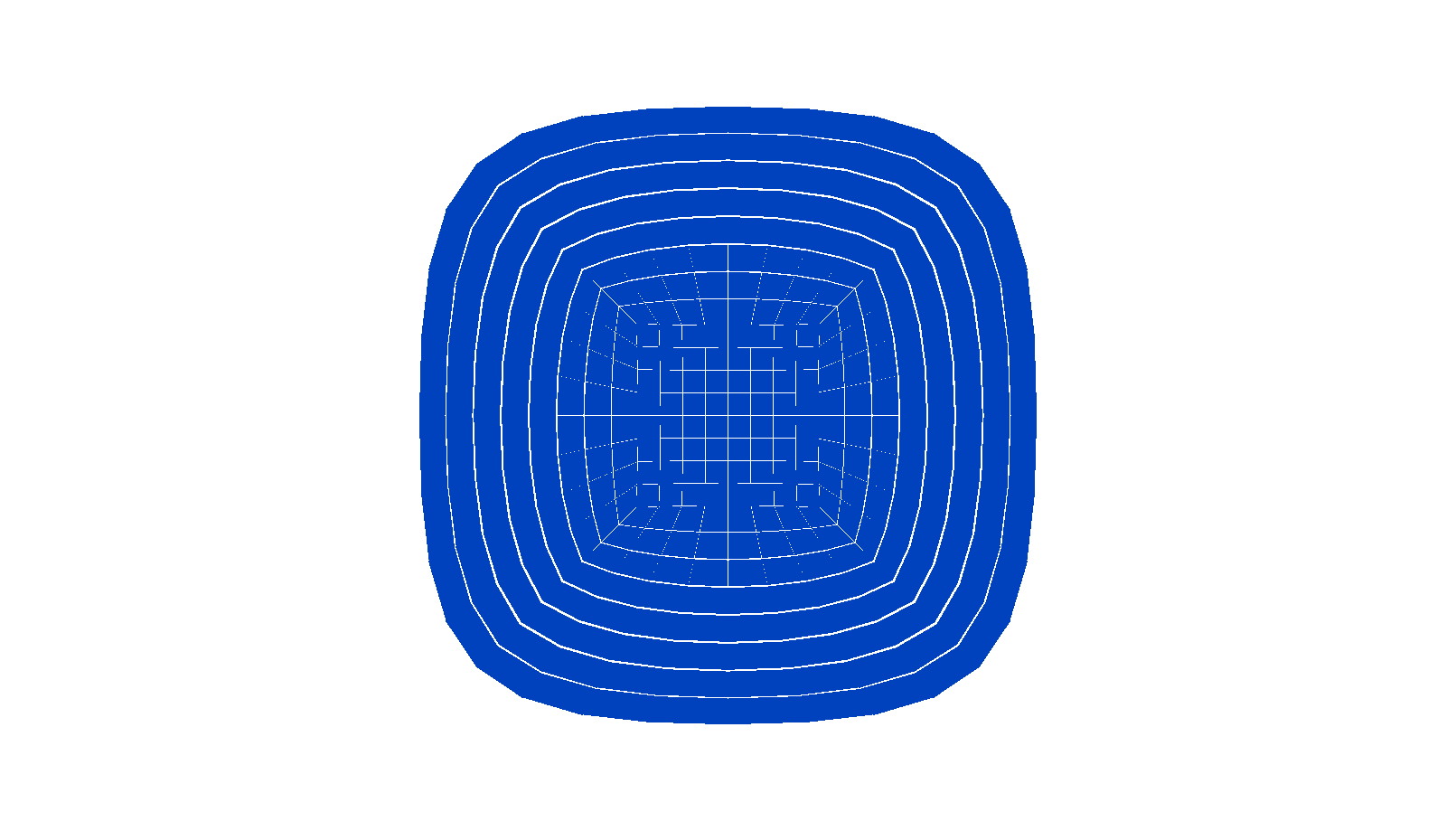}&
		\includegraphics[width=4.1cm]{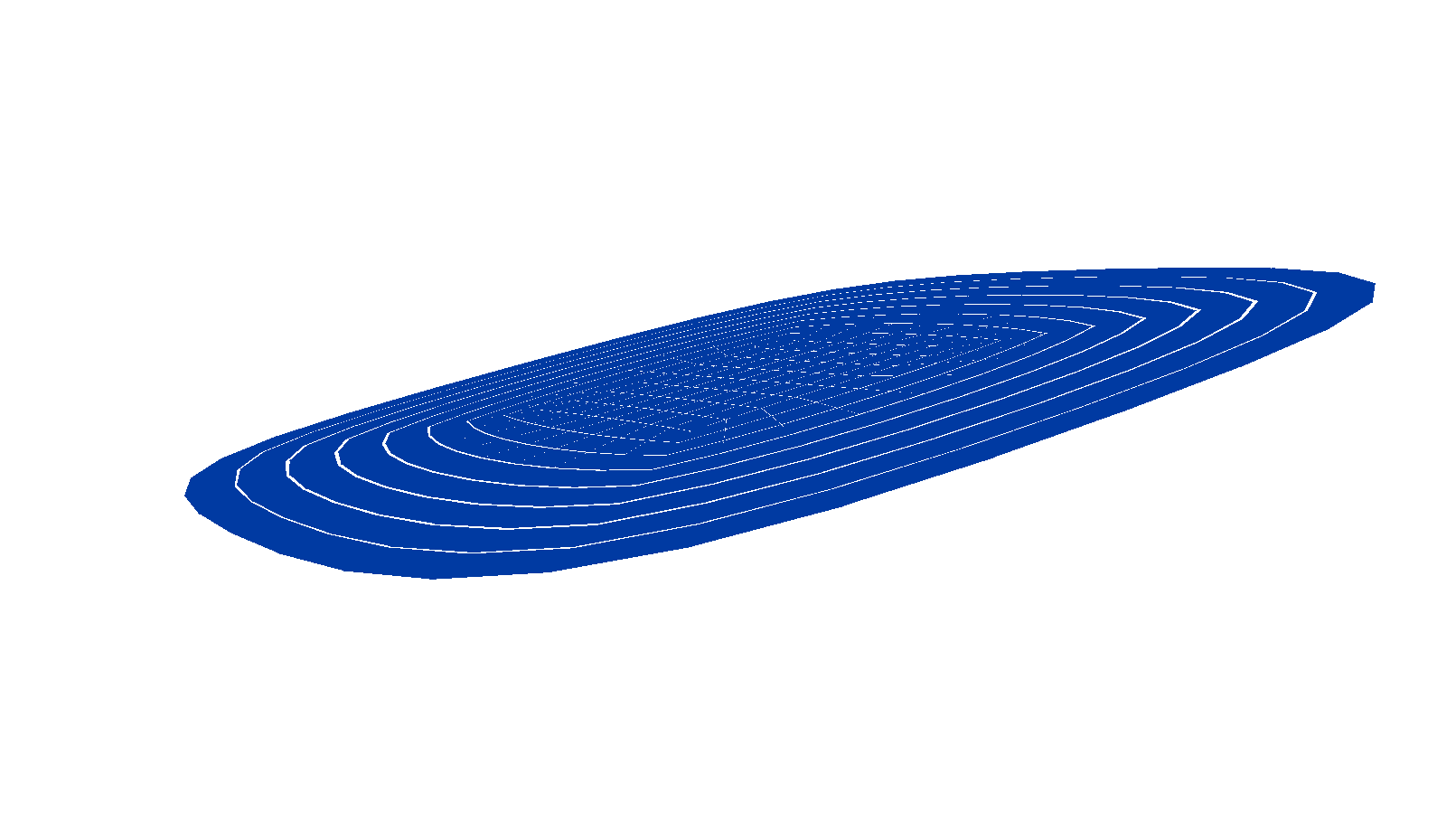}&
		\includegraphics[width=4.1cm]{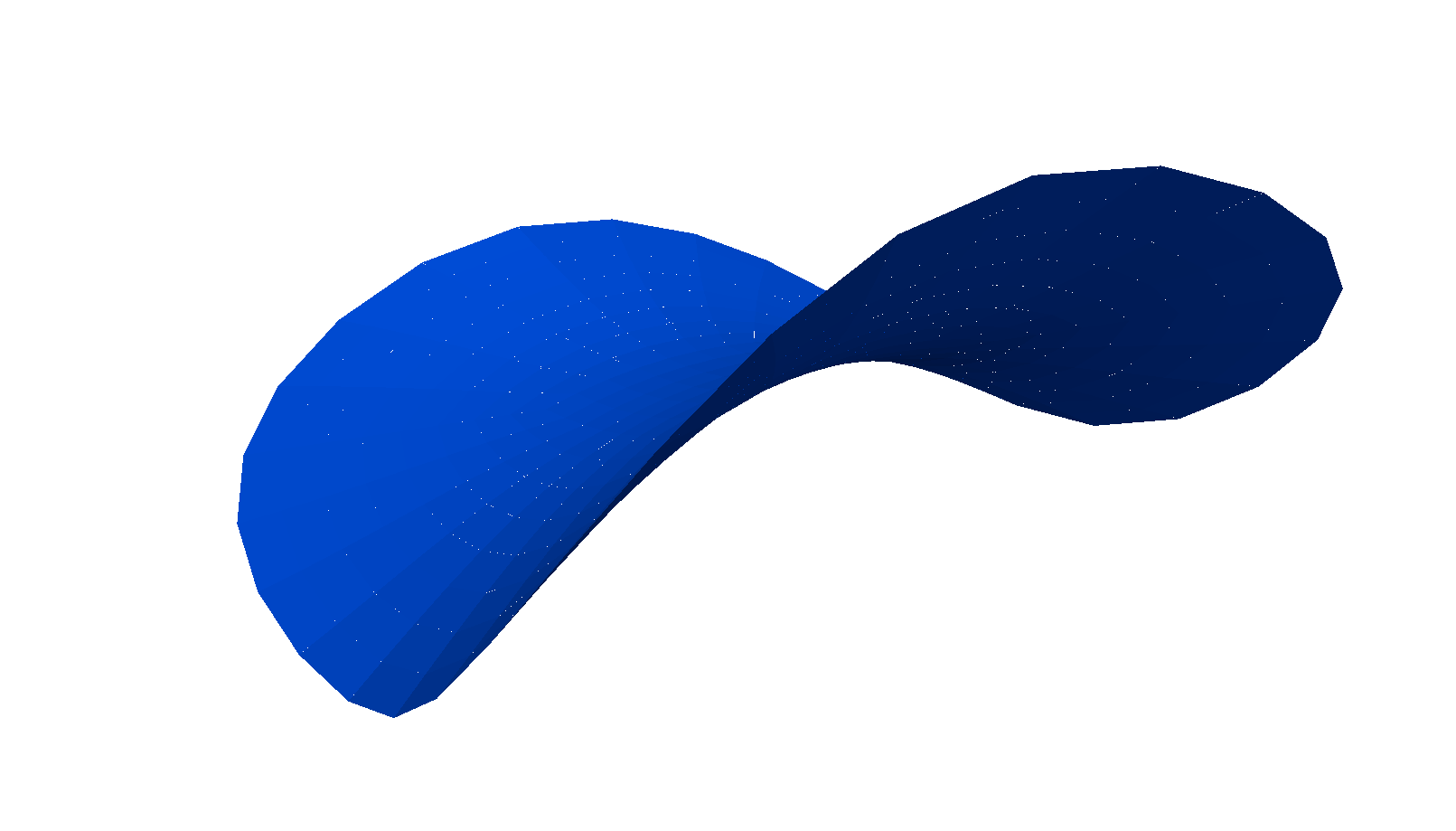}
		\end{tabular}
	\caption{Deformed plate for the disc with negative Gaussian curvature. Algorithm \ref{algo:preprocess} stretches the plate but keeps it flat (left and middle). Algoritm \ref{algo:main_GF} gives rise to a saddle shape (right). Compare with Figure~\ref{fig:bubble}. } \label{fig:hyper_para}
	\end{center}
\end{figure}

We point out that Algoritm \ref{algo:preprocess} gives rise to little gaps between elements of the deformed subdivisions as a consequence of not including jump stabilization terms in the bilinear form \eqref{pre-gf:bilinear}. These gaps are reduced by Algorithm \ref{algo:main_GF}.

\subsubsection{\bf \emph{Oscillating boundary}}\label{ss:osc}
We construct an immersible metric in polar coordinates $(r,\theta)$ with a six-fold oscillation near the boundary of the disc $\Omega$. Let $\widetilde{g}(r,\theta)=\I[\widetilde \vy(r,\theta)]$ be the first fundamental form of the deformation
\begin{equation} \label{def:tilde_y_polar}
\widetilde \vy(r,\theta)= \big(r\cos(\theta),r\sin(\theta),0.2r^4\sin(6\theta)\big).
\end{equation}
The expression of the prestrain metric $g=\I[\vy]$ in Cartesian coordinates
is then given by \eqref{E:Jacobian} and $\vy(x_1,x_2) = \widetilde \vy(r,\theta)$.

We set the parameters
\[
\tau=0.05,\quad
\widetilde\tau=0.05, \quad
\widetilde\eps_0=0.1, \quad
\widetilde{tol}=10^{-4}, \quad
tol=10^{-6},
\]
and note that Algorithm \ref{algo:main_GF} (gradient flow) does not necessarily stop at global minima of the energy. Local extrema are frequently achieved and they are, in fact, of particular interest in many applications. To illustrate this property, we consider a couple of initial deformations and run Algorithms \ref{algo:preprocess} and \ref{algo:main_GF}.

\smallskip\noindent
\textbf{Case 1: boundary oscillation.} We choose $\widetilde\vy_h^0$ to be the local nodal interpolation of $\vy=\widetilde \vy\circ\vpsi$ into $[\V^k_h]^3$, with $\widetilde\vy$ given by \eqref{def:tilde_y_polar}. The output deformations of Algorithms \ref{algo:preprocess} and \ref{algo:main_GF} are depicted in Figure \ref{fig:oscillation_boundary}. The former becomes the initial configuration $\vy_h^0$ of Algorithm \ref{algo:main_GF} and is almost the same as $\widetilde\vy_h^0$, which is approximately a disc with six-fold oscillations; see Figure \ref{fig:oscillation_boundary} (left). This is due to the fact that $\I[\widetilde\vy_h^0]$ is already close to the target metric $g$. Algorithm \ref{algo:main_GF} (gradient flow) breaks the symmetry: two peaks are amplified while the other four are reduced. After the preprocessing, the energy is $E_h=18.0461$ and metric defect is $D_h=0.00208473$. The final energy is $E_h=13.6475$ while the final metric defect is $D_h=0.00528294$. 

\begin{figure}[htbp]
	\begin{center}
		\includegraphics[width=3cm]{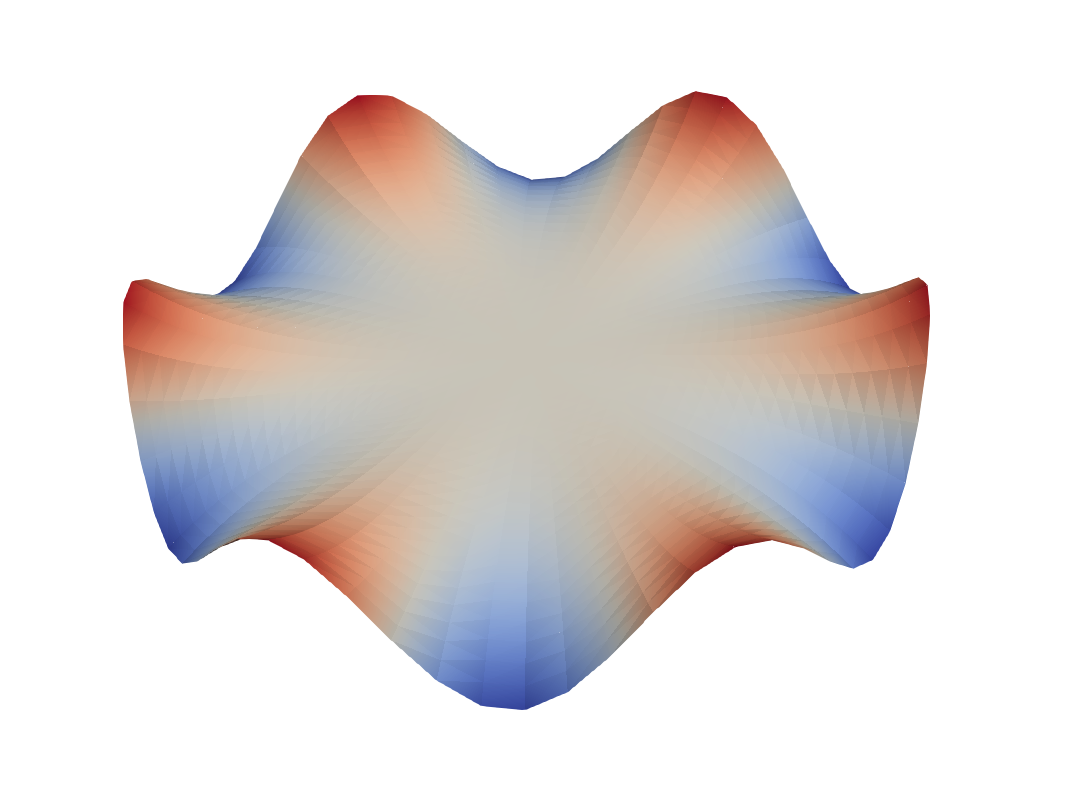}
		\includegraphics[width=3cm]{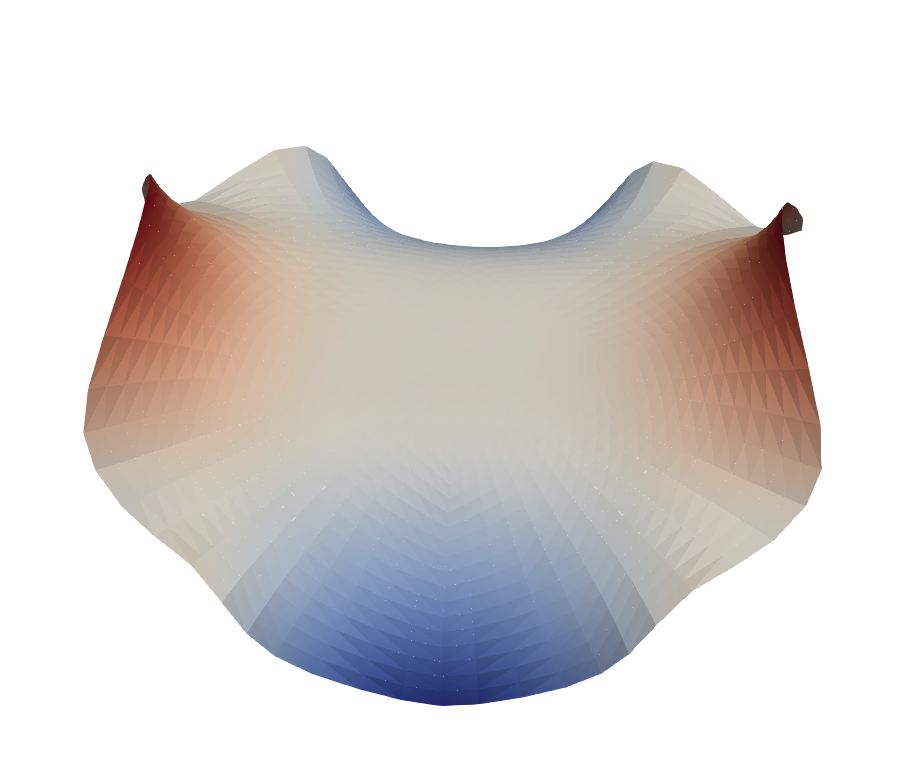}
		\includegraphics[width=4cm]{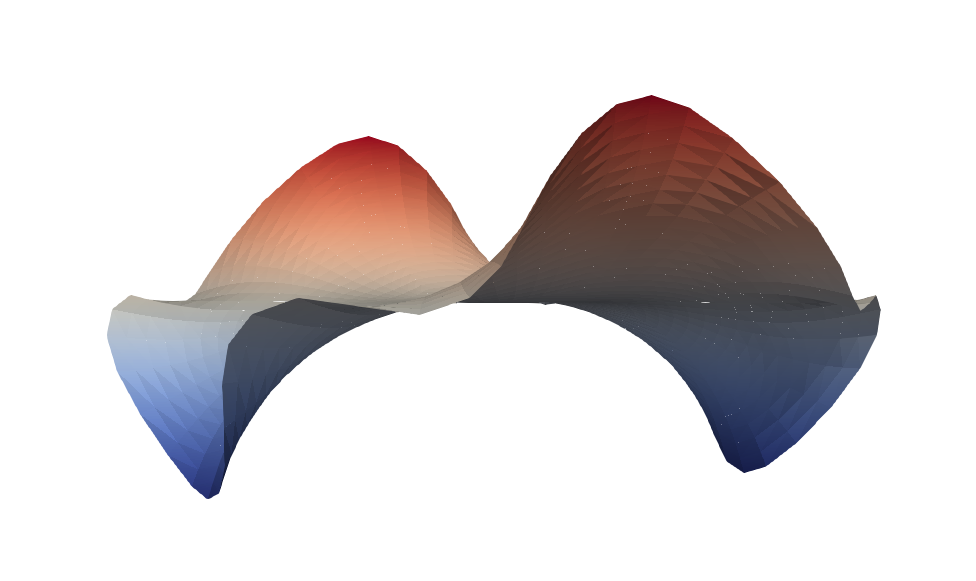}
		\caption{Deformed plate for the disc with oscillation boundary using the initial deformation described in \textbf{Case 1}. Left: output of Algorithm \ref{algo:preprocess} (preprocessing); Middle: output of Algorithm \ref{algo:main_GF} (gradient flow); Right: another view of output of Algorithm \ref{algo:main_GF}. }
		\label{fig:oscillation_boundary}
	\end{center}
\end{figure}

\smallskip\noindent
\textbf{Case 2: no boundary oscillation.} We run Algorithm \ref{algo:preprocess} with the bi-Laplacian problem \eqref{bi-Laplacian} with fictitious force $\widehat \vf=(0,0,1)^T$ and boundary condition $\vvarphi(\vx)=(\vx,0)$ on $\partial\Omega$ (but without $\Phi$). The output of Algorithm \ref{algo:preprocess} is an ellipsoid without oscillatory boundary as in Case 1.

This corresponds to an underlying metric rather different from the target $g$. Algorithm \ref{algo:main_GF} (gradient flow) is unable to improve on the metric defect because it is designed to decrease the bending energy. Therefore, the output of Algorithm \ref{algo:main_GF} is again an ellipsoidal surface totally different from that of Case 1 that is displayed in Figure \ref{fig:oscillation_boundary}. In this case, $D_h = 0.801464$ and $E_h = 0.0377544$ leading to a smaller bending energy but larger metric defect when compared with Case 1.

\begin{figure}[htbp]
	\begin{center}
\begin{tabular}{cc|cc}
		\includegraphics[width=0.2\textwidth]{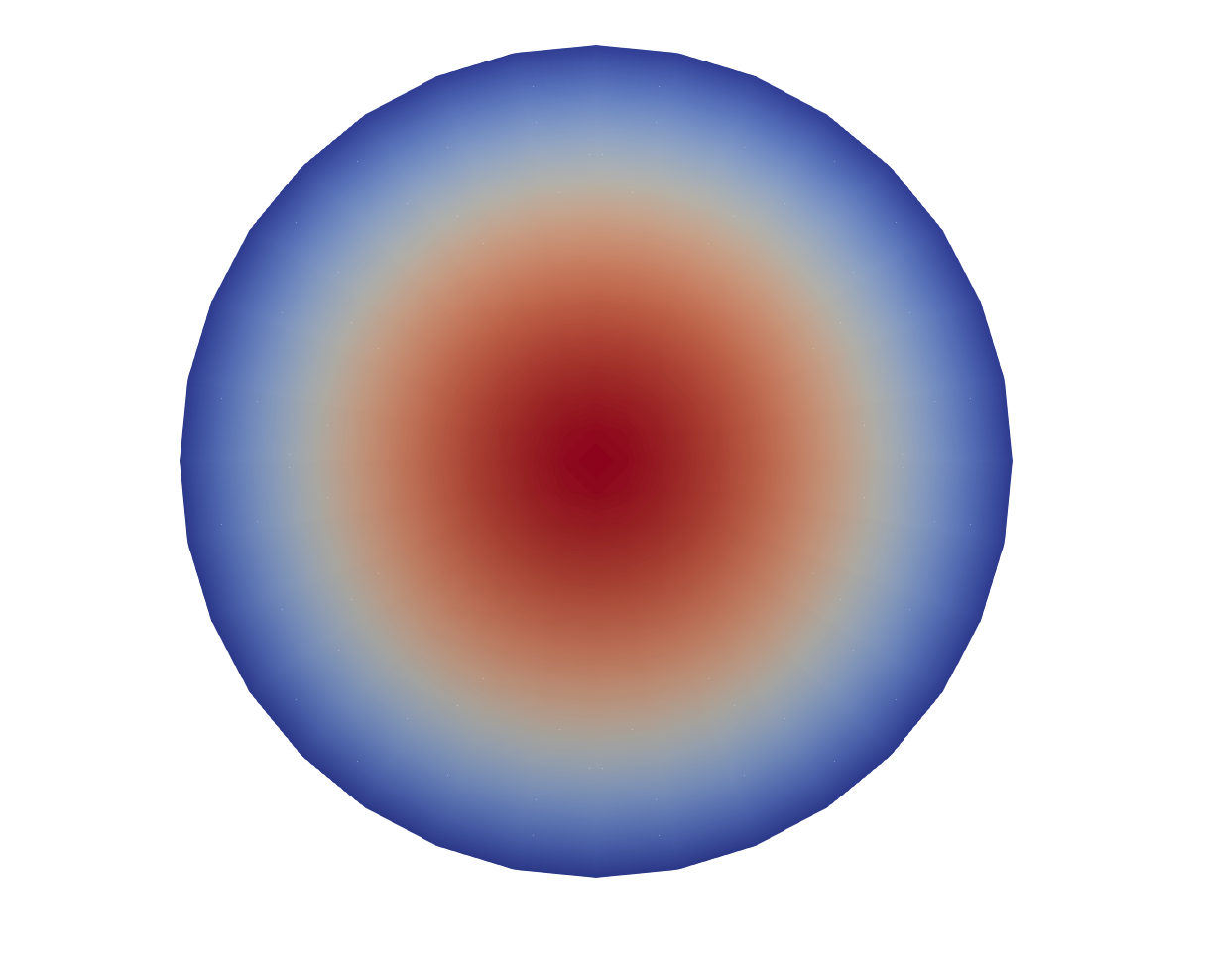}&
		\includegraphics[width=0.2\textwidth, trim=500 300 500 300, clip]{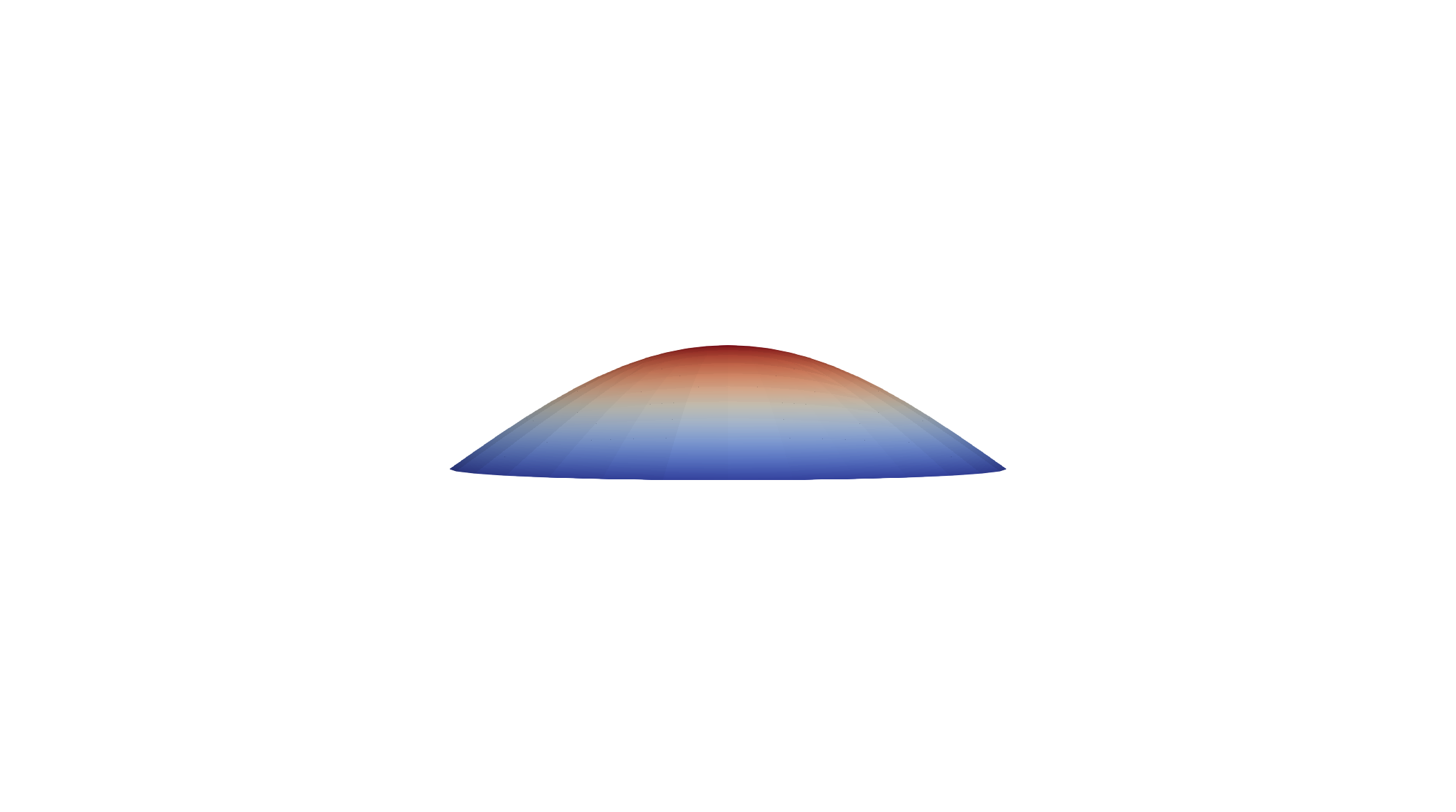}&
		\includegraphics[width=0.23\textwidth]{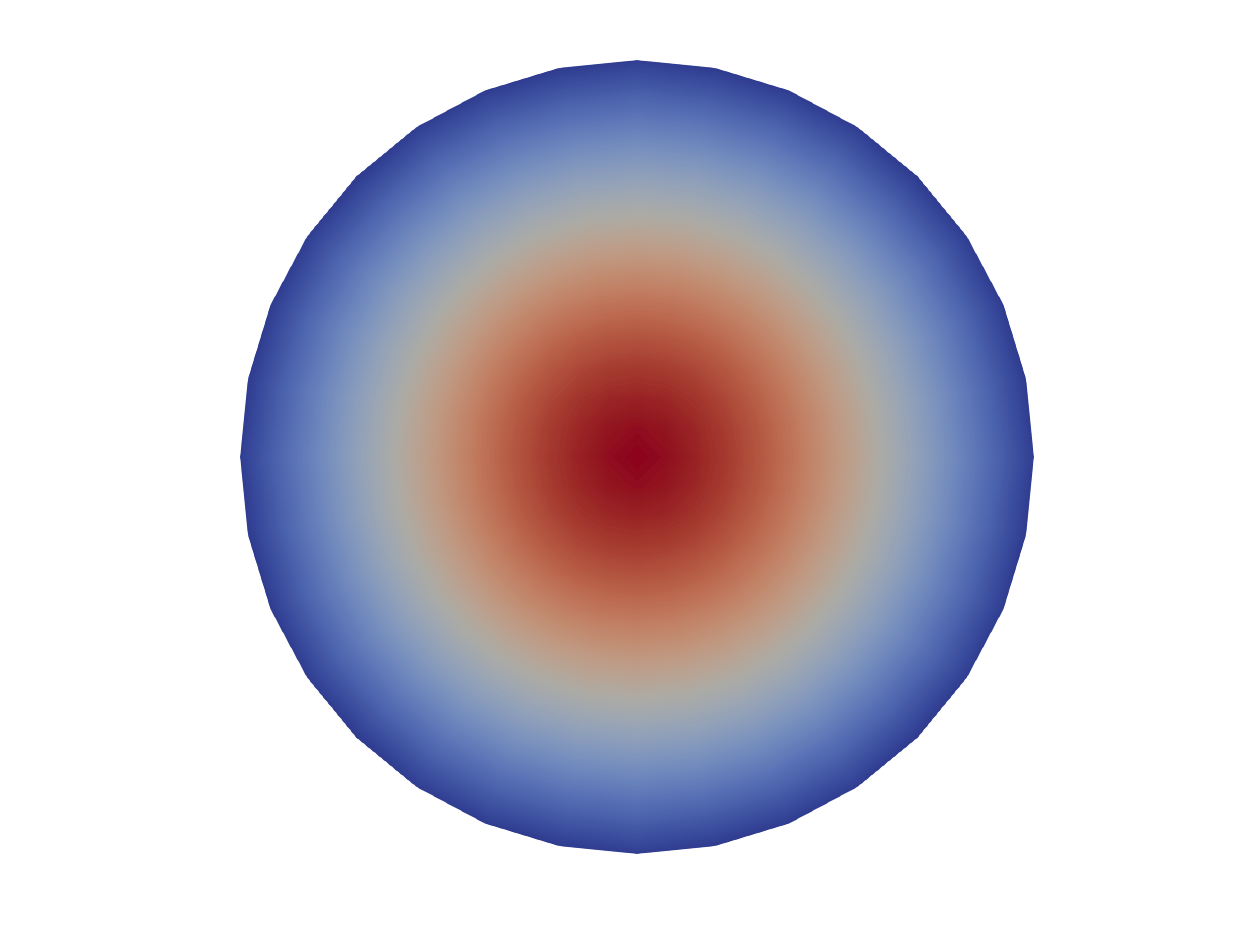}&
		\includegraphics[width=0.2\textwidth,trim=500 300 500 300, clip]{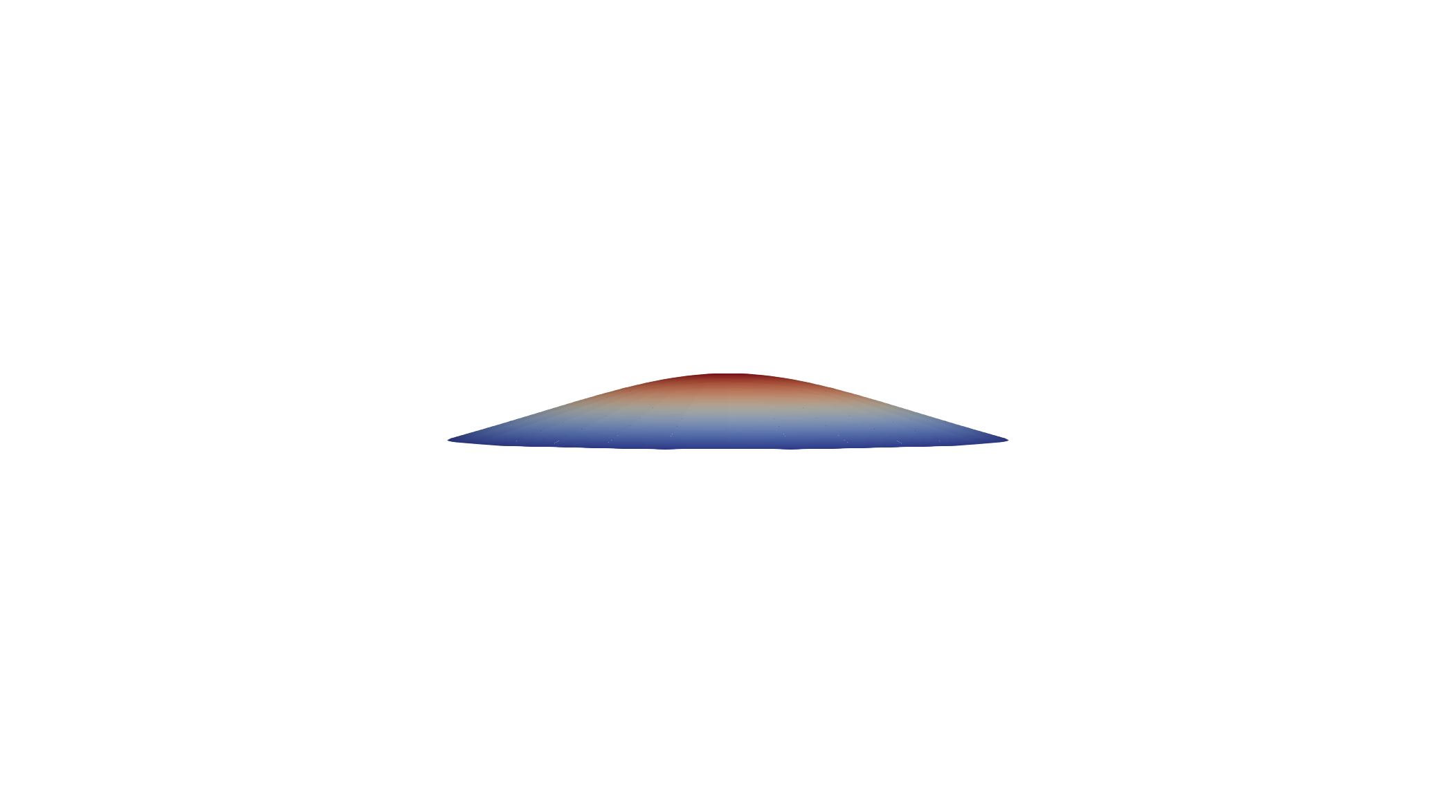}\\
		(a) & (b) & (c) & (d)
\end{tabular}
		\caption{Ellipsoidal-like deformation of a disc without boundary oscillation when using the initial deformation described in \textbf{Case 2}. (a)-(b): output of Algorithm \ref{algo:preprocess} (preprocessing) with maximal third component $y_3$ of the deformation about $7.8\times 10^{-2}$; (c)-(d): output of Algorithm \ref{algo:main_GF} (gradient flow) with maximal $y_3 \approx 4.4\times 10^{-2}$. (a) and (c) are views from the top while (b) and (d) are views from the side where the third component of the deformation is scaled by a factor 10.}
		\label{fig:oscillation_boundary_2}
	\end{center}
\end{figure}

\subsection{Gel discs}\label{S:gel-discs}
Discs made of a NIPA gel with various monomer concentrations can be manufactured in laboratories \cite{sharon2010mechanics,klein2007shaping}. NIPA gels undergo a differential shrinking in warm environments depending on the concentration. Monomer concentrations injected at the center of the disc generate prestrain metrics depending solely on the distance to the center. We thus propose, inspired by \cite[Section 4.2]{sharon2010mechanics}, prestrained metrics $\widetilde{g}(r,\theta)$ in polar coordinates of the form \eqref{E:metric-eta} with
\begin{equation}\label{positive-curvature}
  \eta(r) =
  \begin{cases}
    \frac{1}{\sqrt{K}}\sin(\sqrt{K}r) \quad & K>0,
    \\
    \frac{1}{\sqrt{-K}}\sinh(\sqrt{-K}r) \quad & K<0.
  \end{cases}
\end{equation}
In view of Section \ref{S:admissibility}, these metrics are immersible, namely there
exist compatible deformations $\vy$ such that $\I[\vy]=g$ (isometric immersions). We
now construct computationally isometric embeddings $\vy$
for both $K>0$ (elliptic) and $K<0$ (hyperbolic). It turns out that they
possess a constant Gaussian curvature $\kappa = K$ according to \eqref{E:Gauss-curvature}. 

We let the domain $\Omega$ be the unit disc centered at the origin, do not enforce any boundary conditions and let $\vf = \mathbf{0}$. The partition of $\Omega$ is as in Section \ref{sec:disc} and
\[
\widetilde\tau=0.05,\quad
\widetilde\eps_0=0.1,\quad
\widetilde{tol}=10^{-4}, \quad
tol=10^{-6}.
\]   
{\bf Case $K = 2$ (elliptic):} We use the fictitious force $\widehat \vf=(0,0,1)^T$ in Algorithm \ref{algo:preprocess} (preprocessing) and the pseudo-time step $\tau=0.05$ in Algorithm \ref{algo:main_GF} (gradient flow). We obtain a \emph{spherical-like} final deformation; see Figure \ref{fig:gel-disc-p} and Table \ref{tab:gel_disc_1} for the results.

\begin{figure}[htbp]
	\begin{center}
		\includegraphics[width=4.8cm]{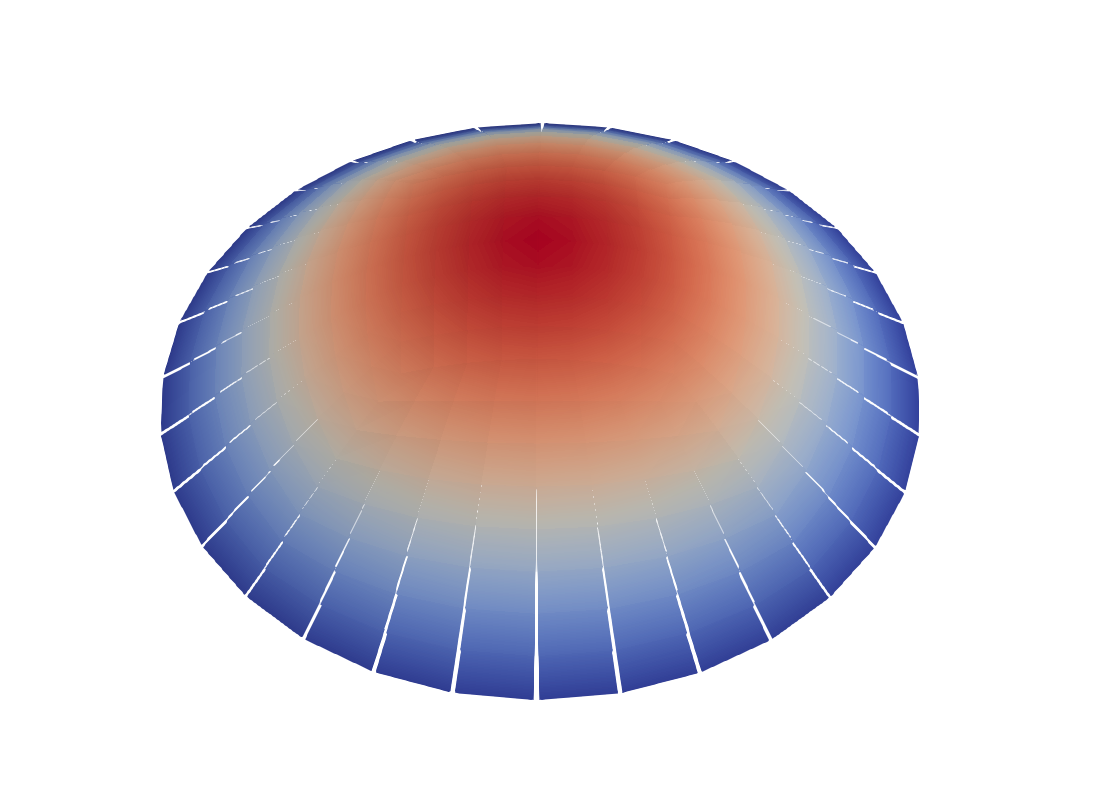}
		\includegraphics[width=4.8cm]{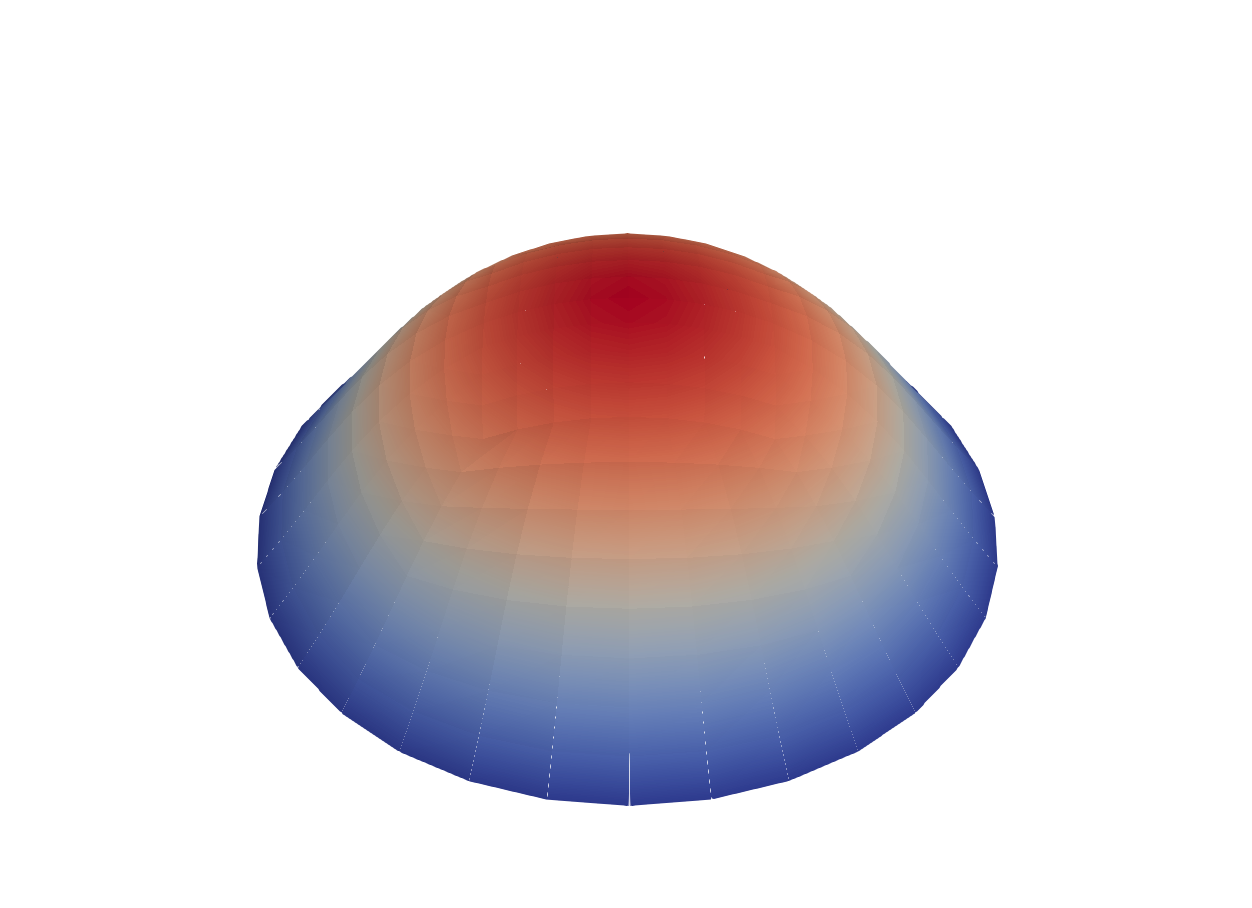}
		\caption{Deformed plate for the disc with constant Gaussian curvature $K=2$ (elliptic).  Outputs of Algorithm \ref{algo:preprocess} (left) and Algorithm \ref{algo:main_GF} (right).} 
		\label{fig:gel-disc-p}
	\end{center}
\end{figure}

\begin{table}[htbp]
	\begin{center}
		\begin{tabular}{|c|c|c|}
			\cline{2-3}
			\multicolumn{1}{c|}{ } & Algorithm \ref{algo:preprocess} & Algorithm \ref{algo:main_GF} \\
			\hline
			$E_h$ & 156.404 & 9.35368\\
			\hline
			$D_h$ & 0.0999494 & 0.188454\\
			\hline
		\end{tabular}
                \vspace{0.3cm}
		\caption{Energy and prestrain defect for disc with constant curvature $K=2$ (elliptic).} \label{tab:gel_disc_1}
	\end{center}	
\end{table}

\noindent
    {\bf Case $K=-2$ (hyperbolic):} We experiment with two different initial deformations for the metric preprocessing of Algorithm \ref{algo:preprocess}:  (i) we take the identity map or (ii) we solve the bi-Laplacian problem \eqref{bi-Laplacian} with a fictitious force $\widehat \vf=(0,0,1)^T$ and boundary condition $\vvarphi(\vx)=(\vx,0)$ on $\partial\Omega$ (but without $\Phi$). Algorithm \ref{algo:preprocess} produces \emph{saddle-like} surfaces in both cases  but with a different number of waves; see Figure \ref{fig:gel-disc-n}. Algorithm \ref{algo:main_GF} uses the pseudo-time steps $\tau=0.00625$ and $\tau=0.0125$ for (i) and (ii), respectively, while the other parameters remain unchanged. Table \ref{tab:gel_disc_2} documents the results.
    
\begin{figure}[htbp]
	\begin{center}
		\includegraphics[width=3cm]{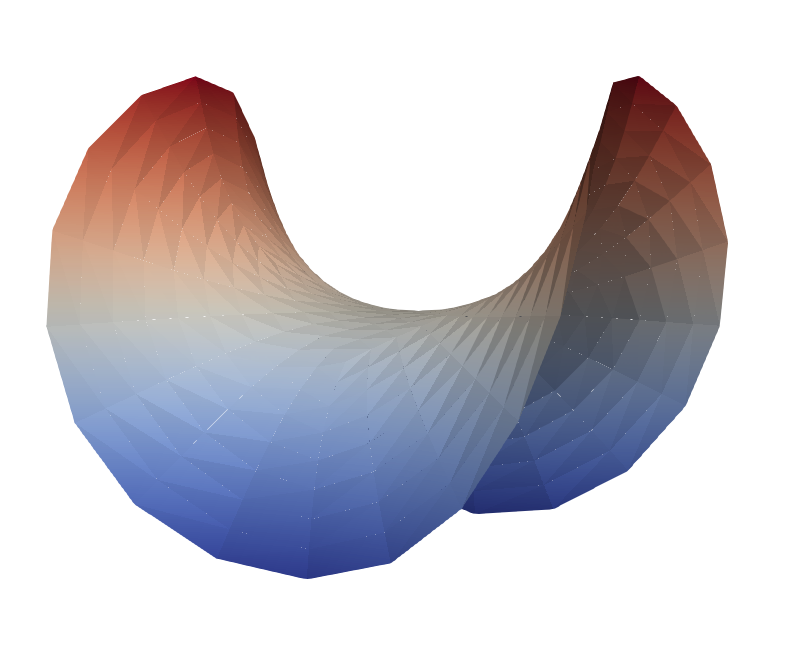}
		\includegraphics[width=3.4cm]{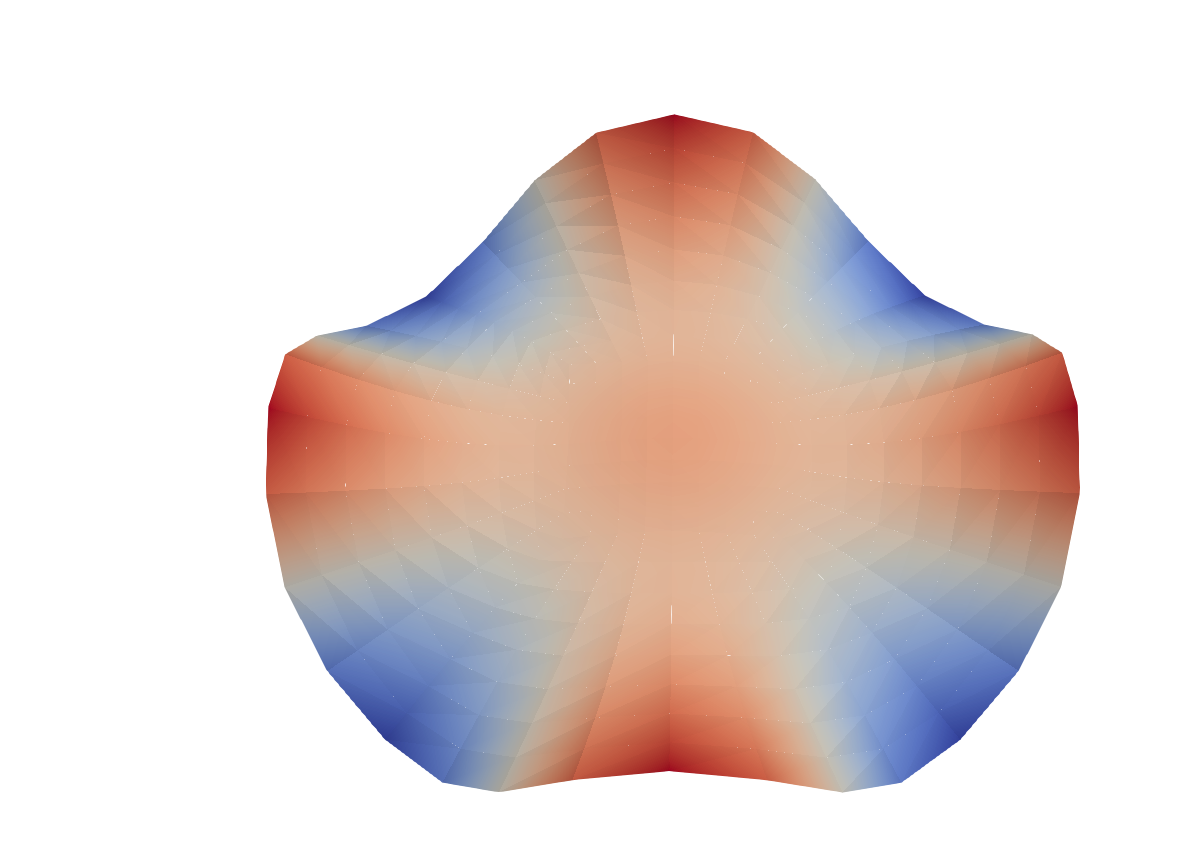}
		\includegraphics[width=3.5cm]{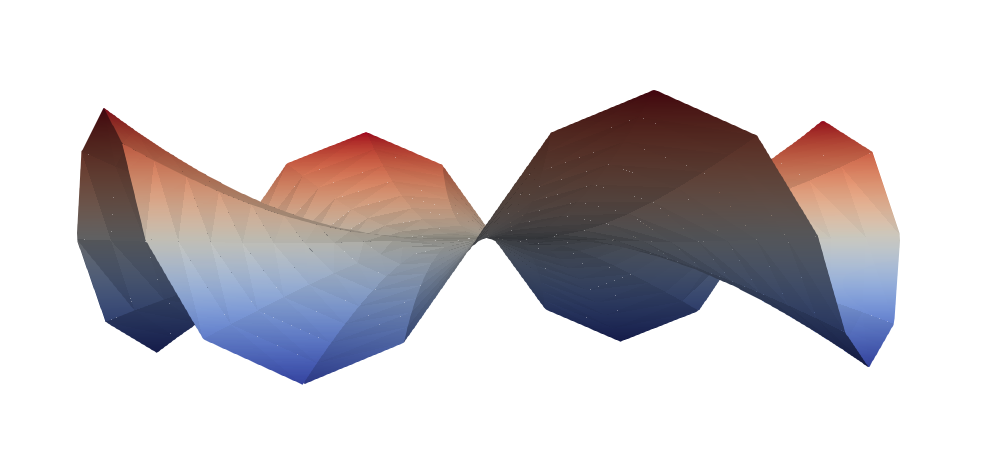}
		\caption{Deformed plate for the disc with constant Gaussian curvature $K=-2$ (hyperbolic). Outputs of Algorithm {\ref{algo:main_GF}} with initialization (i) (left) and initialization (ii) (middle) and another view with initialization (ii) (right). } 
		\label{fig:gel-disc-n}
	\end{center}
\end{figure}

\begin{table}[htbp]
	\begin{center}
		\begin{tabular}{|c|c|c||c|c|}
			\cline{2-5}
			\multicolumn{1}{c|}{ } & \multicolumn{2}{c||}{Initialization (i)} & \multicolumn{2}{c|}{Initialization (ii)}\\
			\cline{2-5}			
			\multicolumn{1}{c|}{ } & Algorithm \ref{algo:preprocess} &  Algorithm \ref{algo:main_GF} & Algorithm \ref{algo:preprocess} &  Algorithm \ref{algo:main_GF} \\
			\hline
			$E_h$  & 699.396 & 6.92318 & 699.399 & 12.0978\\
			\hline
			$D_h$ & 0.0998791 & 0.245552 & 0.0999183 & 0.232627 \\
			\hline
		\end{tabular}
                \vspace{0.3cm}
		\caption{Energy and metric defect for disc with constant Gaussian curvature $K=-2$ (hyperbolic) for two different initial deformations of Algorithm \ref{algo:preprocess}: (i) identity map  and (ii) solution to bi-Laplacian with fictitious force.} \label{tab:gel_disc_2}
	\end{center}	
\end{table}

It is worth mentioning that for the 3d slender model described in \cite{sharon2010mechanics}, it is shown that when $K<0$, the thickness $s$ of the disc influences the number of waves of the minimizing deformation for $K<0$. Our reduced model is asymptotic as $s\to0$ whence it cannot match this feature. However, it reproduces a variety of deformations upon starting Algorithm \ref{algo:preprocess} with suitable initial configurations.

\section{Conclusions}\label{S:conclusions}

In this article, we design and implement a numerical scheme for the simulation of large deformations of prestrained plates. Our contributions are:

\smallskip\noindent
1. {\it Model and asymptotics.} We present a formal asymptotic limit of a 3d hyperelastic energy in the bending regime. The reduced model, rigorously derived in \cite{lewicka2016},
consists of minimizing a nonlinear energy involving the second fundamental form of the deformed plate and the target metric under a nonconvex metric constraint. We show that this energy is equivalent to a simpler quadratic energy that replaces the second fundamental form by the Hessian of the deformation. This form is more amenable to computation and is further discretized.

\smallskip\noindent
2. {\it LDG: discrete Hessian.}
We introduce a local discontinuous Galerkin (LDG) approach for the discretization of the reduced energy, thereby replacing the Hessian by a reconstructed Hessian. The latter consists of three parts: the broken Hessian of the deformation, a lifting of the jumps of the broken gradient of the deformation, and a lifting of the jumps of the deformation. In contrast to interior penalty dG, the penalty parameters must be positive for stability but not necessarily large. The formulation of the discrete energy with LDG is conceptually simpler and it gives a method with reduced CPU time. This does not account for the computation of the discrete Hessian of each basis function which is done once at the beginning.
 
\smallskip\noindent
3. {\it Discrete gradient flow.}
We propose and implement a discrete $H^2$-gradient flow to decrease the discrete energy while keeping the metric defect under control. We emphasize the performance of Algorithm \ref{algo:main_GF} (gradient flow). The construction of suitable initial deformations by Algorithm \ref{algo:preprocess} (initialization) is somewhat ad-hoc leaving room for improvements in future studies.

\smallskip\noindent
4. {\it Simulations.}
We present several numerical experiments to investigate the performance of the proposed LDG approach and the model capabilities. A rich variety of configurations with and without boundary conditions, some of practical value, are accessible by this computational modeling. We also show a superior performance of LDG relative to the interior penalty dG method of \cite{bonito2018} for $g=I_2$.


\section*{Acknowledgment}
Ricardo H. Nochetto and Shuo Yang were partially supported by the NSF Grants DMS-1411808 and DMS-1908267. 

Andrea Bonito and Diane Guignard were partially supported by the NSF Grant DMS-1817691.

\bibliographystyle{amsplain} 
\bibliography{abrevjournal,bibliography}

\end{document}